\documentclass[11pt]{article}
\usepackage{amsmath,amssymb,amsfonts,amsthm}
\usepackage{mathrsfs}
\usepackage{indentfirst}
\usepackage[pdftex]{color,graphicx}
\usepackage[pdftex,bookmarks,unicode,colorlinks]{hyperref}
\usepackage{enumerate} 
\usepackage[numbers]{natbib} 
\usepackage{yfonts} 
\usepackage{stmaryrd} 
\usepackage{extarrows}
\usepackage{longtable}
\usepackage{caption} 
\usepackage[toc,page]{appendix}
\usepackage{mathtools}
\usepackage[ruled,vlined,linesnumbered]{algorithm2e}
\usepackage{subcaption}
\usepackage{refcheck}

\usepackage{tikz-cd}
\usetikzlibrary{matrix,arrows,decorations.pathmorphing}

\allowdisplaybreaks[4]

\theoremstyle{plain}
\newtheorem{theorem}{Theorem}[section]
\newtheorem{proposition}[theorem]{Proposition}
\newtheorem{corollary}[theorem]{Corollary}
\newtheorem{lemma}[theorem]{Lemma}

\theoremstyle{remark}
\newtheorem{remark}[theorem]{Remark}

\theoremstyle{definition}
\newtheorem{definition}[theorem]{Definition}

\newcommand{\R}{\mathbb R}

\newcommand{\e}{\epsilon}

\newcommand{\Ad}{\operatorname{Ad}}

\newcommand{\pt}{\partial}

\newcommand{\GL}{\mathrm{GL}}

\newcommand{\ad}{\operatorname{ad}}
\newcommand{\varg}{\textsl{g}}

\DeclareFontFamily{U}{mathx}{\hyphenchar\font45}
\DeclareFontShape{U}{mathx}{m}{n}{
      <5> <6> <7> <8> <9> <10>
      <10.95> <12> <14.4> <17.28> <20.74> <24.88>
      mathx10
      }{}
\DeclareSymbolFont{mathx}{U}{mathx}{m}{n}
\DeclareFontSubstitution{U}{mathx}{m}{n}
\DeclareMathAccent{\widecheck}{0}{mathx}{"71}
\DeclareMathAccent{\wideparen}{0}{mathx}{"75}

\makeatletter
\def\widebreve#1{\mathop{\vbox{\m@th\ialign{##\crcr\noalign{\kern3\p@}%
      \brevefill\crcr\noalign{\kern3\p@\nointerlineskip}%
      $\hfil\displaystyle{#1}\hfil$\crcr}}}\limits}

\def\brevefill{$\m@th \setbox\z@\hbox{$\braceld$}%
  \bracelu\leaders\vrule \@height\ht\z@ \@depth\z@\hfill\braceru$}
\makeatletter

\numberwithin{equation}{section} 

\title{\bf Invariant connections in geometric mechanics: \\
reduction, nonlocality, and curvature effects}

\author{
\normalsize{
Qiao Huang\footnote{School of Mathematics, Southeast University, Nanjing 211189, P.R. China. Email: \texttt{qiao.huang@seu.edu.cn}}
}
}
\date{}

\begin{document}

\maketitle
\vspace{-0.4in}

\begin{abstract}
We present a systematic study on the role of invariant connections in geometric mechanics. We first develop a comprehensive theory of reduction under left- and right-invariant connections on Lie groups, showing that the Euler--Poincar\'e and Lie--Poisson equations are independent of the connection. We then introduce a novel connection-dependent variational principle, where the Lagrangian depends on the velocity parallel-transported back to the initial point of the curve. For Cartan--Schouten connections, this leads to an integro-differential Euler--Poincar\'e equation that exhibits two distinct sources of nonlocality in time: a path-dependent term encoded in the parallel transport and a future-dependent term arising from a curvature integral. We reformulate this equation as a two-point boundary value problem and present a two-level numerical scheme. The general theory is illustrated on the Heisenberg group, where the equations simplify due to nilpotency, and on the rotation group, where the full integro-differential structure is retained.
  \bigskip\\
  \textbf{AMS 2020 Mathematics Subject Classification:} 70G45, 70H25, 37J06, 37J60, 53C05. \\
  \textbf{Keywords:} Global Euler--Lagrange equation; Cartan--Schouten connections; Connection-dependent variational principle; Path-dependent variational principle; Euler--Poincar\'e; Lie--Poisson.
\end{abstract}

\section{Introduction}

Geometric mechanics is the study of mechanical systems through the lens of differential geometry \cite{AM78,LM87}, with applications spanning from classical particle \cite{DR89} and rigid body \cite{HSS09} to Cosserat continuum \cite{ED98}, fluid mechanics \cite{KW09}, control theory \cite{BL04}, thermodynamics \cite{GY17,HZ23b}, and field theory \cite{DR85}, etc. The interplay between symmetry and reduction lies at the heart of modern geometric mechanics. The observation that the Euler--Lagrange and Hamilton equations on a Lie group descend to equations on its Lie algebra and dual is one of the cornerstones of the subject. The Euler--Poincar\'e and Lie--Poisson reduction theorems, originally developed for left-invariant systems, provide a systematic framework for exploiting continuous symmetries. These results have been extensively studied in the classical works of Arnold--Khesin \cite{Arn66,AK98,KMM21}, Marsden--Ratiu--Holm \cite{EM70,HMR98,MR99}, and others \cite{OR04,GHR11,CHR18,ACD24,Ost98}, and are now standard in the geometric mechanics literature. However, the role of the underlying affine connection in the reduction process has received comparatively little attention, despite the fact that connections arise naturally in many mechanical systems, particularly those with nonholonomic constraints \cite{Blo15} or with Lagrangians that depend explicitly on the geometry of the configuration space.

A distinguished class of connections on Lie groups is the family of Cartan--Schouten connections, which are bi-invariant connections whose geodesics through the identity coincide with one-parameter subgroups. These connections, introduced by Cartan and Schouten in the context of symmetric spaces \cite{CS26}, have been studied extensively in the differential geometry literature \cite{Nom54,Bec72,Laq92}. They are characterized by the property that their associated bilinear map on the Lie algebra is proportional to the Lie bracket \([\cdot,\cdot]_{\mathfrak g}\), and the most prominent examples are the $(\pm)$-connections, which are flat and have torsion equal to \(\pm [\cdot,\cdot]_{\mathfrak g}\), and the symmetric $(0)$-connection, which is torsion-free and locally symmetric. Cartan--Schouten connections have found applications in geometric control \cite{Bul95}, nonholonomic mechanics \cite{Bar16}, computational anatomy \cite{PL20}, and information geometry \cite{DMS24}, where they provide a natural geometric framework for studying invariant structures on Lie groups. Despite these applications, a systematic treatment of their role in reduction theory, particularly for Lagrangians that depend on the connection itself, has been lacking.

In this paper, we address this gap by providing a unified geometric framework for reduction on Lie groups equipped with left- or right-invariant connections. The main contributions of this paper are twofold. 

We first develop a comprehensive theory of reduction under left- and right-invariant connections, establishing the equivalence between the global Euler--Lagrange and Hamilton's equations and their reduced counterparts, the Euler--Poincar\'e and Lie--Poisson reduction equations, on the Lie algebra and its dual. This theory is valid for any invariant connection, and the reduction equations are shown to be independent of the connection: the connection terms cancel exactly in the reduction process, yielding the classical reduction equations. This result clarifies the geometric robustness of the reduction procedure and explains why the classical local-coordinate formulations contain no connection terms.

We then introduce a novel connection-dependent variational principle, in which the Lagrangian depends not on the instantaneous velocity \(\dot \gamma(t)\), but on the velocity \(v(t)=\Gamma(g)_t^0\dot \gamma(t)\) parallel-transported back to the initial point of the curve. This formulation is natural when one wishes to work in a fixed reference vector space and to separate the dynamical variables from the geometry of the connection. 
For Cartan--Schouten connections, we derive an integro-differential Euler--Poincar\'e equation that exhibits two distinct sources of nonlocality when the connection is non-flat. First, the parallel transport term \(\Gamma^*(g)_t^0\frac{\delta l}{\delta g}\) encodes the holonomy of the connection along the curve, introducing a path-dependent nonlocality that reflects the non-abelian structure of the group. Second, the future integral term contributes a future-dependent correction that arises from the curvature of the connection. In the flat cases ($(\pm)$-connections), the parallel transport reduces to pure left or right translation, and the curvature integral vanishes; the entire equation then localizes to the classical left and right Euler--Poincar\'e equations. Unlike the parallel transport term, which is intrinsic to the group structure, the curvature-induced future dependence can be eliminated by introducing an auxiliary variable, reducing the future integral term to a terminal value problem and reflecting the two-point boundary value structure of the variational problem expressed in transported velocities. The interplay between these two sources of nonlocality in time is a central theme of our connection-dependent variational framework: The nonlocality of the parallel transport term is intrinsic and unavoidable, while that of the curvature integral term is eliminable.
We provide a detailed numerical analysis of the equation's structure, its reformulation as a two-point boundary value problem, and its numerical solution via a two-level scheme. The general theory is illustrated with two concrete examples: the Heisenberg group (in Appendix \ref{sec-app-H3}), where the parallel transport and the reduced equations simplify explicitly due to nilpotency, and the rotation group \(\mathrm{SO}(3)\), where the full integro-differential structure is retained and solved numerically for both the free rigid body and the harmonic oscillator.

The paper is organized as follows. Section \ref{sec-prel} reviews the necessary background on Lie groups, tangent and cotangent bundles, and horizontal differentials. Section \ref{sec-inv-conn} presents the theory of left-invariant connections on Lie groups, including their curvature, torsion, and parallel transport. Section \ref{sec-geom-mech} develops the Euler--Poincar\'e and Lie--Poisson reduction theorems for left-invariant connections, establishing the equivalence between the global Euler--Lagrange and Hamilton's equations and their reduced counterparts on the Lie algebra and its dual. Section \ref{sec-CS-conn} introduces Cartan--Schouten connections and their bi-invariance and special cases. Section \ref{sec-conn-vp} presents the connection-dependent variational principle and derives the integro-differential Euler--Poincar\'e equation. Section \ref{sec-solv} discusses the solvability of the reduced equation reformulated as a two-point boundary value problem, where we 
design a two-level scheme and apply it to the rigid body. 
Section \ref{sec-conclusion} summarizes the main results and beyond.
Finally, the appendices collect selected proofs and supplementary material supporting the theoretical developments, examples, and numerical implementation.

\section{Preliminaries}\label{sec-prel}

This section reviews the basic differential geometry that will be needed in the sequel. We fix notations and recall some standard facts about tangent and cotangent bundles, Lie groups and their Lie algebras. For more details, we refer to the classic textbooks \cite{KN63, DR89, MR99, Lee13, GQ20}.

\subsection{Lie groups and Lie algebras}

Let \(G\) be a Lie group with identity element \(e\). For each \(g\in G\), the left and right translations are the diffeomorphisms \(\mathrm{L}_g, \mathrm{R}_g:G\to G\) defined by \(\mathrm{L}_g(h)=gh\) and \(\mathrm{R}_g(h)=hg\). A vector field \(V\) on \(G\) is called \emph{left-invariant} if it is invariant under all left translations, i.e. \((\mathrm{L}_g)_*V=V\) for every \(g\in G\). Similarly, a vector field is \emph{right-invariant} if it is invariant under all right translations. For every tangent vector \(v\in T_eG\) one obtains a left-invariant vector field \(v^{\mathrm L}\) and a right-invariant vector field \(v^{\mathrm R}\) by
\[
v^{\mathrm L}|_g = d(\mathrm{L}_g)_e(v),\qquad 
v^{\mathrm R}|_g = d(\mathrm{R}_g)_e(v),\qquad g\in G.
\]

For notational simplicity, we shall adopt the following \emph{convention} for the lifted actions of \(G\) on \(TG\) and \(T^*G\) \cite[Section 6.2]{HSS09}. For a left multiplicative action, we use \(gv := d(\mathrm{L}_g)_h v\) to denote the tangent lift of left translation, and \(g\alpha := d(\mathrm{L}_{g^{-1}})^*_{gh}\alpha\) the corresponding cotangent lift. When \(G\) is a matrix Lie group, these simplify to the usual matrix multiplication \(gv\) and \((g^{-1})^\top \alpha\), respectively. Similarly, for the right multiplicative action, we use \(v g := d(\mathrm{R}_g)_h v\) and \(\alpha g := d(\mathrm{R}_{g^{-1}})^*_{hg}\alpha\).

The space \(\mathfrak g^{\mathrm L}\) (respectively \(\mathfrak g^{\mathrm R}\)) of all left-invariant (right-invariant) vector fields is a Lie subalgebra of the Lie algebra \(\mathfrak X(G)\) of smooth vector fields on \(G\). The map \(v\mapsto v^{\mathrm L}\) (resp. \(v\mapsto v^{\mathrm R}\)) is a vector space isomorphism between \(T_eG\) and \(\mathfrak g^{\mathrm L}\) (resp. \(\mathfrak g^{\mathrm R}\)). Consequently the tangent space \(\mathfrak g:=T_eG\) inherits a Lie algebra structure $[\cdot, \cdot]_{\mathfrak g}$ by defining
\begin{equation}\label{Lie-bracket-left}
[v,w]_{\mathfrak g} := [v^{\mathrm L},w^{\mathrm L}]_e ,\qquad v,w\in\mathfrak g.
\end{equation}
With this bracket, \(\mathfrak g\) is isomorphic to \(\mathfrak g^{\mathrm L}\) as a Lie algebra, while the isomorphism \(\mathfrak g\cong\mathfrak g^{\mathrm R}\) becomes an \emph{anti-isomorphism}; indeed, one has
\begin{equation}\label{Lie-bracket-right}
[v^{\mathrm R},w^{\mathrm R}] = -\,[v,w]_{\mathfrak g}^{\,\mathrm R},\qquad v,w\in\mathfrak g.
\end{equation}
Given two vector fields $V,W$ on $G$,
\begin{equation}\label{bracket-basis}
  [V, W]_g = g \left( V_g(g^{-1}W_g) - W_g(g^{-1}V_g) + [g^{-1}V_g,g^{-1}W_g]_{\mathfrak g} \right).
\end{equation}

The \emph{exponential map} \(\exp:\mathfrak g\to G\) is defined by \(\exp(v)=e_v(1)\), where \(e_v(\cdot)\) is the integral curve of the left-invariant vector field \(v^{\mathrm L}\) (equally of the right-invariant field \(v^{\mathrm R}\)) starting at \(e\). 

The \emph{adjoint representation of the Lie group} $\Ad:G\to\GL(\mathfrak g)$ is defined by $\Ad_g = d(\mathrm{L}_g)_{g^{-1}} \circ d(\mathrm{R}_{g^{-1}})_e$; its derivative at the identity gives the \emph{adjoint representation of the Lie algebra} $\ad := d(\Ad)_e :\mathfrak g\to\mathfrak{gl}(\mathfrak g)$, $\ad_v w = [v,w]_{\mathfrak g}$. 
It is standard (e.g., \cite[Theorem 8.44]{Lee13}) that \(\Ad_g\) preserves the Lie bracket: for all \(v,w\in\mathfrak g\),
\begin{equation}\label{Ad-bracket}
[\Ad_g v,\Ad_g w]_{\mathfrak g} = \Ad_g [v,w]_{\mathfrak g}.
\end{equation}
The derivative of the adjoint map along a curve in \(G\) can be expressed in terms of the adjoint operator \cite[Proposition 6.54]{HSS09}.
Let \(g(\cdot)\) be a smooth curve on $G$ and \(w\in\mathfrak g\). Then
\begin{equation}\label{der-Ad-curve-inv}
  \frac{d}{dt} \Ad_{g(t)^{-1}} w = - \ad_{g(t)^{-1} g'(t)} \Ad_{g(t)^{-1}} w = - \Ad_{g(t)^{-1}} \ad_{g'(t) g(t)^{-1}} w.
\end{equation}

\subsection{Vertical differentials and derivatives on tangent and cotangent bundles}

In the next two subsections, we recall the notions of vertical and horizontal differentials introduced in \cite[Section 7]{HZ23}, together with the associated chain rules on tangent and cotangent bundles. These tools will be essential for the variational principles by covariant calculus developed in later sections.

Let $M$ be a smooth manifold of dimension $m$.

\begin{definition}[Vertical differentials on \(TM\)]
Let \(F\in C^\infty(TM)\). 
The \emph{vertical differential} of \(F\) is the semi-basic 1-form on \(TM\) defined by
\begin{equation}\label{eq:vert-diff}
d_{\dot x}F := i_J dF = J^*(dF) = \pt_{\dot x^i} F\,dx^i,
\end{equation}
where \(J:TTM\to TTM\) denotes the canonical almost tangent structure.
\end{definition}

Analogous constructions exist on the cotangent bundle.

\begin{definition}[Vertical derivatives on \(T^*M\)]
Let \(F\in C^\infty(T^*M)\).
The \emph{vertical derivative} of \(F\) is the vector field along the projection \(\tau^*_M:T^*M\to M\) defined by
\begin{equation}\label{eq:vert-grad}
\pt_p F := \pt_{p_i} F\,\pt_{x^i}.
\end{equation}
\end{definition}

\begin{remark}
In \cite[Subsection 7.4.2]{HZ23}, the symbol \(\nabla_p F\) was used to denote the vertical derivative defined in \eqref{eq:vert-grad}, which they termed the vertical gradient. Here we have redefined and renamed this object to better align with standard geometric conventions, reserving \(\nabla\) exclusively for covariant derivatives associated with a connection.
\end{remark}

Both \(d_{\dot x}F\) and \(\pt_p F\) are well-defined tensorially, i.e. they do not depend on the choice of local coordinates. This fact is verified by a direct change of canonical coordinates $(x,\dot x)$ and $(x,p)$.

\subsection{Horizontal differentials}

\subsubsection{On fiber bundles}

The notion of a connection can be defined on an arbitrary fiber bundle. Let \(\pi:E\to M\) be a smooth fiber bundle over a manifold \(M\). An \emph{Ehresmann connection} on \(E\) is a smooth distribution \(H\subset TE\) that is complementary to the vertical bundle \(V\pi=\ker \pi_*\). Equivalently, it is given by a bundle morphism \(\Gamma: \pi^*(TM) \to TE\) over \(E\) that splits the exact sequence
\[
0 \longrightarrow V\pi \longrightarrow TE \longrightarrow \pi^*(TM) \longrightarrow 0.
\]
In other words, \(\Gamma\) is a \(TE\)-valued 1-form on \(E\) that is horizontal over \(M\) and satisfies \(\pi_*\circ\Gamma = \operatorname{id}_{\pi^*(TM)}\). The map \(\Gamma\) is called the \emph{connection form}. 

In local coordinates \((x^i,u^\alpha)\) on \(E\), where \((x^i)\) are coordinates on the base and \((u^\alpha)\) coordinates in the fibers, the connection form can be written as
\begin{equation*}
\Gamma = dx^i \otimes \left( \pt_{x^i} - \Gamma_i^\alpha \pt_{u^\alpha} \right),
\end{equation*}
where the functions \(\Gamma_i^\alpha(x,u)\) are called the \emph{connection coefficients}. 
The horizontal subspace $H_e$ at \(e\in E\) is spanned by the vectors
\[
H_i|_e =\Gamma(\pt_{x^i}|_e) = \pt_{x^i}|_e - \Gamma_i^\alpha(e)\pt_{u^\alpha}|_e,\qquad i=1,\dots,m.
\]

For a tangent vector \(v=v^i\pt_{x^i}|_x\in T_xM\), its \emph{horizontal lift} at a point \(e\in E_x\) is
\begin{equation}\label{eq:hrz-lift-vec}
\bar v = \Gamma(v) = v^i\left( \pt_{x^i} - \Gamma_i^\alpha \pt_{u^\alpha} \right)|_e \in H_e.
\end{equation}
For a smooth curve \(\gamma\) on $M$, a curve \(\bar\gamma\) on $E$ is called a \emph{horizontal lift} of \(\gamma\), if \(\pi\circ\bar\gamma=\gamma\) and the velocity vector \(\bar\gamma'(t)\) belongs to the horizontal subspace \(H_{\bar\gamma(t)}\) for every \(t\). By the theory of ordinary differential equations, such a horizontal lift through a point \(\bar\gamma(s) = e\in E_{\gamma(s)}\) at time $s$ exists and is unique. This uniqueness implies immediately that, the velocity of \(\bar\gamma\) coincides with the horizontal lift of the velocity of \(\gamma\):
\begin{equation}\label{eq:hrz-lift-curve}
\overline{\gamma'} = \bar\gamma'.
\end{equation}
The map from $E_{\gamma(s)}$ to $E_{\gamma(t)}$ that maps $\bar\gamma(s)$ to $\bar\gamma(t)$ is called the parallel transport along \(\gamma\) from time \(s\) to \(t\), and denoted by \(\Gamma(\gamma)_s^t\).

Using the connection, we can define a horizontal part of the differential of functions on the total space. This generalizes the corresponding notion in \cite[Section 7]{HZ23} from tangent and cotangent bundles to general fiber bundles.

\begin{definition}[Horizontal differential on a fiber bundle]
Let \(F\in C^\infty(E)\). The \emph{horizontal differential} of \(F\) (with respect to Ehresmann connection \(\Gamma\)) is the semi-basic 1-form on \(E\) defined by
\begin{equation}\label{eq:hor-diff-local}
d_x^\Gamma F := \Gamma^*(dF) = \left( \pt_{x^i} F - \Gamma_i^\alpha \pt_{u^\alpha} F \right) dx^i,
\end{equation}
where $\Gamma^*$ is the dual of the bundle morphism \(\Gamma\).
\end{definition}

The horizontal differential has the following geometric interpretations.

The next lemma evaluates the horizontal differential by differentiating \(F\) along parallel transport; its proof is given in Appendix~\ref{sec-app-selected-proofs}.
\begin{lemma}[Evaluation of horizontal differential]\label{lem:horizontal-differential}
Let \(F\in C^\infty(E)\). Let \(\gamma\) be a smooth curve on $M$. Then for any $t$ and $e\in E_{\gamma(t)}$,
\begin{equation}\label{hor-diff-curve}
d_x^\Gamma F|_e (\gamma'(t)) = \frac{d}{d\e}\bigg|_{\e=0} F\bigl(\Gamma(\gamma)_t^{t+\e} e\bigr).
\end{equation}
\end{lemma}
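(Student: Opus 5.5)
The argument is a chain-rule computation along the horizontal curve obtained by parallel transport. Fix \(t\) and \(e\in E_{\gamma(t)}\), and set \(\bar\gamma(s):=\Gamma(\gamma)_t^{s}e\) for \(s\) near \(t\). By definition of parallel transport, \(\bar\gamma\) is the unique horizontal lift of \(\gamma\) with \(\bar\gamma(t)=e\). Writing \(s=t+\e\), the right-hand side of \eqref{hor-diff-curve} is \(\frac{d}{ds}\big|_{s=t}F(\bar\gamma(s))\). By the chain rule this equals \(dF|_e(\bar\gamma'(t))\).

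Next I would identify \(\bar\gamma'(t)\). By \eqref{eq:hrz-lift-curve}, the velocity of the horizontal lift equals the horizontal lift of the velocity, so \(\bar\gamma'(t)=\Gamma(\gamma'(t))\) evaluated at the point \(e\). Hence
\[
\frac{d}{d\e}\Big|_{\e=0}F\bigl(\Gamma(\gamma)_t^{t+\e}e\bigr)=dF|_e\bigl(\Gamma(\gamma'(t))\bigr)=(\Gamma^*dF)|_e(\gamma'(t))=d_x^\Gamma F|_e(\gamma'(t)),
\]
where the last two equalities come from the definition of the dual morphism \(\Gamma^*\) and from \eqref{eq:hor-diff-local}.

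As a sanity check, and as an alternative route if one prefers to avoid \eqref{eq:hrz-lift-curve}, I would redo this in local coordinates. Write \(\bar\gamma(s)=(\gamma^i(s),u^\alpha(s))\). Horizontality means \(\dot u^\alpha=-\Gamma_i^\alpha(\bar\gamma)\dot\gamma^i\). Then
\[
\frac{d}{ds}F(\bar\gamma)=\pt_{x^i}F\,\dot\gamma^i+\pt_{u^\alpha}F\,\dot u^\alpha=(\pt_{x^i}F-\Gamma_i^\alpha\pt_{u^\alpha}F)\dot\gamma^i,
\]
which is \eqref{eq:hor-diff-local} applied to \(\gamma'(t)\).

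The only delicate point is the existence and smoothness of \(s\mapsto\Gamma(\gamma)_t^se\) near \(s=t\), including for \(\e<0\). This follows from local ODE existence, uniqueness and smooth dependence, which the paper already invokes when defining horizontal lifts. No other obstacle is expected.
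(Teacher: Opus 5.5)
Your proof is correct and follows the paper's argument essentially step for step. The paper also takes the horizontal lift \(\bar\gamma\) through \(e\) at time \(t\), uses \eqref{eq:hrz-lift-curve} to identify \(\bar\gamma'(t)\) with the horizontal lift of \(\gamma'(t)\), and combines the chain rule with \(d_x^\Gamma F|_e(v)=dF|_e(\Gamma(v))\); your local-coordinate check via \(\dot u^\alpha=-\Gamma_i^\alpha\dot\gamma^i\) is a valid extra confirmation that the paper does not include.
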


\subsubsection{On tangent and cotangent bundles}

Given a linear connection \(\nabla\) on \(M\), the above constructions reduce to horizontal differentials on the tangent and cotangent bundles when \(E=TM\) and $T^*M$, respectively, and \(\Gamma\) is the corresponding linear connection induced by the linear connection \(\nabla\) on \(M\). In these cases, the connection coefficients are \(\Gamma_i^k(x,\dot x) = \Gamma_{ij}^k(x) \dot x^j\) and \(\Gamma_{ij}(x,p) = \Gamma_{ij}^k(x) p_k\), where \(\Gamma_{ij}^k\) are the Christoffel symbols of \(\nabla\).

For \(F\in C^\infty(TM)\), the \emph{horizontal differential} of \(F\) (with respect to \(\nabla\)) \eqref{eq:hor-diff-local}, now denoted by $d_x^\nabla F$, is the 1-form on \(TM\):
\begin{equation}\label{eq:hor-diff-1}
d_x^\nabla F = 
\left(\pt_{x^i} F - \Gamma_{ij}^k\dot x^j\pt_{\dot x^k} F\right)dx^i.
\end{equation}
Similarly, for \(F\in C^\infty(T^*M)\), the \emph{horizontal differential} of \(F\) is the 1-form on \(T^*M\) denoted and given by
\begin{equation}\label{eq:hor-diff-2}
d_x^\nabla F =
\left(\pt_{x^i} F + \Gamma_{ij}^k p_k\pt_{p_j} F\right)dx^i.
\end{equation}

Using the vertical and horizontal differentials defined above, we have the following Leibniz-type rules for functions on $TM$ composed with vector fields, and on $T^*M$ composed with 1-forms.

\begin{lemma}[Chain rules on \(TM\) and \(T^*M\) {\cite[Section 7]{HZ23}}] \quad \\
(i) Let \(V\) be a vector field on \(M\) and \(F\in C^\infty(TM)\). Denote by \(F\circ V\) the function on \(M\) obtained by restricting \(F\) to the image of \(V\). Then
\begin{equation}\label{chain-rule}
d(F\circ V) = d_x^\nabla F\circ V + (d_{\dot x}F\circ V)(\nabla_{\partial_i}V)\,dx^i.
\end{equation}
(ii) Let \(\eta\) be a 1-form on \(M\) and \(F\in C^\infty(T^*M)\). Denote by \(F\circ\eta\) the function on \(M\) obtained by restricting \(F\) to the image of \(\eta\). Then
\begin{equation}\label{chain-rule-2}
d(F\circ\eta) = d_x^\nabla F\circ\eta + \bigl(\nabla_{\partial_i}\eta\bigr)\bigl(\pt_p F\circ\eta\bigr)\,dx^i.
\end{equation}
\end{lemma}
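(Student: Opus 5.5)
Both identities are local and tensorial, so I will verify them in a chart $(x^i)$ with induced coordinates $(x^i,\dot x^i)$ on $TM$ and $(x^i,p_i)$ on $T^*M$. The plan is to compute $d(F\circ V)$ by the ordinary chain rule, expand the right-hand side of \eqref{chain-rule} using \eqref{eq:hor-diff-1} and \eqref{eq:vert-diff}, and check that the Christoffel terms cancel. Part (ii) is handled the same way with \eqref{eq:hor-diff-2}.

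For (i), write $V=V^k\pt_{x^k}$, so that $F\circ V(x)=F(x,V(x))$. The ordinary chain rule gives
\[
d(F\circ V)=\bigl(\pt_{x^i}F\circ V+(\pt_{\dot x^k}F\circ V)\,\pt_{x^i}V^k\bigr)dx^i .
\]
Next I will evaluate \eqref{eq:hor-diff-1} at $\dot x=V(x)$:
\[
d_x^\nabla F\circ V=\bigl(\pt_{x^i}F-\Gamma_{ij}^kV^j\pt_{\dot x^k}F\bigr)\circ V\,dx^i .
\]
By \eqref{eq:vert-diff}, $d_{\dot x}F\circ V=(\pt_{\dot x^k}F\circ V)\,dx^k$. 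Pairing this with $\nabla_{\pt_i}V=(\pt_iV^k+\Gamma_{ij}^kV^j)\pt_{x^k}$ gives $(\pt_{\dot x^k}F\circ V)(\pt_iV^k+\Gamma_{ij}^kV^j)$. Adding the two pieces, the terms $\pm\Gamma_{ij}^kV^j\pt_{\dot x^k}F$ cancel, and what remains is exactly the ordinary chain rule expression. The only convention to watch is the index order in $\Gamma_{ij}^k$. Since \eqref{eq:hor-diff-1} is induced from $\Gamma_i^k(x,\dot x)=\Gamma_{ij}^k\dot x^j$, I will take $\nabla_{\pt_i}\pt_j=\Gamma_{ij}^k\pt_k$ consistently, so that the same symbol appears in both places.

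For (ii), write $\eta=\eta_k\,dx^k$. Then $d(F\circ\eta)=\bigl(\pt_{x^i}F+\pt_{p_k}F\,\pt_i\eta_k\bigr)\circ\eta\,dx^i$. The dual connection gives
\[
\nabla_{\pt_i}\eta=(\pt_i\eta_j-\Gamma_{ij}^k\eta_k)\,dx^j .
\]
By \eqref{eq:vert-grad}, $\pt_pF\circ\eta=(\pt_{p_j}F\circ\eta)\,\pt_{x^j}$, so pairing gives $(\pt_i\eta_j-\Gamma_{ij}^k\eta_k)\,\pt_{p_j}F$. Evaluating \eqref{eq:hor-diff-2} at $p=\eta$ contributes the term $+\Gamma_{ij}^k\eta_k\,\pt_{p_j}F$, and the two Christoffel terms cancel. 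What is left is the coordinate chain rule.

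There is no real obstacle here. The step needing care is the sign and index bookkeeping for the dual connection on $T^*M$: the Christoffel term in $\nabla_{\pt_i}\eta$ must match the one in \eqref{eq:hor-diff-2} with the same index positions for the cancellation to go through. As an alternative that avoids coordinates, one could use Lemma~\ref{lem:horizontal-differential}. Along a curve $\gamma$, split $V(\gamma(t+\e))$ into its parallel-transported initial value plus a first-order correction $\e\,\nabla_{\gamma'}V$. Differentiating $F$ at $\e=0$ then gives $d_x^\nabla F(\gamma')$ plus a vertical term $d_{\dot x}F(\nabla_{\gamma'}V)$, which is the same identity.
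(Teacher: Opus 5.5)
Your coordinate verification is correct. With $\nabla_{\partial_i}\partial_j=\Gamma_{ij}^k\partial_k$, which is the convention implicit in \eqref{eq:hor-diff-1}, the Christoffel terms cancel in both parts exactly as you describe. The paper gives no proof of its own and simply cites \cite[Section 7]{HZ23}, so this direct check from the coordinate definitions \eqref{eq:hor-diff-1} and \eqref{eq:hor-diff-2} is essentially the intended argument. Your bookkeeping in (ii) also shows why the paper's corrected form is the right one. The differentiated direction is the $dx^i$ slot, so $\nabla_{\partial_i}\eta$ must be paired with $\partial_pF\circ\eta$. The version $\nabla_{(\partial_pF\circ\eta)}\eta$ from the earlier reference would swap the roles of $i$ and $j$ in $\partial_i\eta_j-\Gamma_{ij}^k\eta_k$.
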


\begin{remark}
Formula \eqref{chain-rule-2} corrects the corresponding formula in \cite[Subsection 7.4.2]{HZ23}, where the second term at the r.h.s. of the formula was mistakenly derived as $\nabla_{(\pt_p F\circ\eta)}\eta$.
\end{remark}

These vertical/horizontal differential operators play a crucial role when writing variational principles on tangent or cotangent bundles, as will be seen in Section \ref{sec-geom-mech}.

\section{Invariant connections on Lie groups}\label{sec-inv-conn}

In preparation for the study of geometric reductions in later sections, we recall the general setup of left- and right-invariant connections on a Lie group. These are connections that are invariant under all left or right translations, respectively. Their structure is completely encoded by a bilinear map on the Lie algebra. 
Since the right-invariant case is entirely analogous to the left-invariant one, in this section we present only the left-invariant formalism. The corresponding right-invariant results are included in Appendix~\ref{sec-app-right-conn} for completeness.

\begin{definition}
A connection \(\nabla\) on a Lie group \(G\) is called \emph{left-invariant} if for every pair of left-invariant vector fields \(V,W\), the covariant derivative \(\nabla_V W\) is again left-invariant. Equivalently, left translations are affine isomorphisms: \((\mathrm{L}_g)^*\nabla = \nabla\) for all \(g\in G\).
\end{definition}

For a left-invariant connection $\nabla$, the covariant derivative of left-invariant vector fields is determined by a bilinear map \(\nabla^{\mathrm{L}}:\mathfrak g\times\mathfrak g\to\mathfrak g\), called the \emph{left bilinear map associated with $\nabla$}, defined by
\[
\nabla^{\mathrm{L}}_v w := (\nabla_{v^{\mathrm L}}w^{\mathrm L})_e,
\qquad \nabla_{v^{\mathrm L}}w^{\mathrm L}=(\nabla^{\mathrm{L}}_v w)^{\mathrm L}.
\]
For arbitrary vector fields \(V\) and \(W\), the covariant derivative reads
\begin{equation}\label{eq:left-conn}
  (\nabla_V W)_g = g \left[ V_g(g^{-1} W_g) + \nabla^{\mathrm{L}}_{g^{-1} V_g} (g^{-1} W_g) \right], \quad g\in G.
\end{equation}

We denote by \(\nabla^{\mathrm{L}*}:\mathfrak g\times\mathfrak g^*\to\mathfrak g^*\) the adjoint of \(\nabla^{\mathrm{L}}\) with respect to the natural pairing:
\begin{equation*}
  \nabla^{\mathrm{L}*}_v \alpha (w) = \alpha (\nabla^{\mathrm{L}}_v w), \quad \alpha \in \mathfrak g^*, v,w\in \mathfrak g.
\end{equation*}
Along a curve \(g(\cdot)\), we have the following identities.
Their proof consists of substituting the curve velocity into the left-trivialized connection formula \eqref{eq:left-conn} and then obtaining the covector identity by duality.
\begin{lemma}[Covariant derivative along a curve]\label{cov-der-curve}
Let \(g(\cdot)\) be a smooth curve on $G$. For a vector field \(W\) along \(g(\cdot)\),
\begin{equation}\label{eq:cov-curve-left}
\frac{D}{dt}W_{g(t)}
= d(\mathrm{L}_{g(t)})
\left( \frac{d}{dt} + \nabla^{\mathrm{L}}_{d(\mathrm{L}_{g(t)^{-1}})g'(t)} \right)
\left( d(\mathrm{L}_{g(t)^{-1}})W_{g(t)} \right).
\end{equation}
For a 1-form \(\eta\) along \(g(\cdot)\),
\begin{equation}\label{eq:cov-curve-left-dual}
\frac{D}{dt}\eta_{g(t)}
=
d(\mathrm{L}_{g(t)^{-1}})^*
\left( \frac{d}{dt} - \nabla^{\mathrm{L}*}_{d(\mathrm{L}_{g(t)^{-1}})g'(t)} \right)
\left( d(\mathrm{L}_{g(t)})^*\eta_{g(t)} \right),
\end{equation}
\end{lemma}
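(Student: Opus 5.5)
The vector identity \eqref{eq:cov-curve-left} comes from the left-trivialized formula \eqref{eq:left-conn} applied along the curve; the covector identity \eqref{eq:cov-curve-left-dual} then follows by duality. I would first trivialize $W$ by setting $w(t):=d(\mathrm{L}_{g(t)^{-1}})W_{g(t)}\in\mathfrak g$ and $\xi(t):=d(\mathrm{L}_{g(t)^{-1}})g'(t)$, so that $W_{g(t)}=\sum_a w^a(t)\,e_a^{\mathrm L}|_{g(t)}$ for a fixed basis $(e_a)$ of $\mathfrak g$. The covariant derivative along a curve is characterized by three properties: it is $\R$-linear, it satisfies the Leibniz rule in the scalar coefficients, and for an extendible field it equals $\nabla_{g'}$ of an extension. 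Using these,
\[
\frac{D}{dt}W_{g(t)}=\sum_a \dot w^a(t)\, e_a^{\mathrm L}|_{g(t)}+\sum_a w^a(t)\,\nabla_{g'(t)}e_a^{\mathrm L}.
\]
By \eqref{eq:left-conn} with $W=e_a^{\mathrm L}$, the term $V_g(g^{-1}e_a^{\mathrm L})$ vanishes because $g^{-1}e_a^{\mathrm L}|_g=e_a$ is constant. Hence $\nabla_{g'}e_a^{\mathrm L}=g\,\nabla^{\mathrm L}_{\xi}e_a$. Summing and using bilinearity of $\nabla^{\mathrm L}$ gives $g(t)\bigl(\dot w+\nabla^{\mathrm L}_{\xi}w\bigr)$, which is \eqref{eq:cov-curve-left}.

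For the dual statement, I would set $\alpha(t):=d(\mathrm{L}_{g(t)})^*\eta_{g(t)}\in\mathfrak g^*$. For any vector field $W$ along the curve, this gives $\eta(W)=\alpha(w)$. I would then use the defining compatibility of the induced connection on $T^*G$ with the pairing,
\[
\frac{d}{dt}\eta(W)=\Bigl(\frac{D}{dt}\eta\Bigr)(W)+\eta\Bigl(\frac{D}{dt}W\Bigr).
\]
The left side equals $\dot\alpha(w)+\alpha(\dot w)$. By the first part, the last term equals $\alpha(\dot w+\nabla^{\mathrm L}_\xi w)$. Subtracting leaves $(\frac{D}{dt}\eta)(W)=\dot\alpha(w)-\alpha(\nabla^{\mathrm L}_\xi w)=(\dot\alpha-\nabla^{\mathrm L*}_\xi\alpha)(w)$. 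Since $w$ is arbitrary, rewriting $(\cdot)(w)$ as the pairing with $d(\mathrm{L}_{g^{-1}})W$ identifies $\frac{D}{dt}\eta=d(\mathrm{L}_{g^{-1}})^*(\dot\alpha-\nabla^{\mathrm L*}_\xi\alpha)$, which is \eqref{eq:cov-curve-left-dual}.

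The only delicate point is justifying the use of \eqref{eq:left-conn} for a field defined only along a curve, which may not be extendible, for example when the curve self-intersects. This is exactly why I expand in the global left-invariant frame: it reduces everything to $\nabla_{g'}$ of genuine vector fields plus the Leibniz rule, so no extension is needed. I also need to keep the convention $g^{-1}V_g=d(\mathrm{L}_{g^{-1}})V_g$ straight so that the notation in \eqref{eq:left-conn} matches the statement.
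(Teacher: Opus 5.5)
Your proof is correct and follows essentially the same route as the paper, which simply substitutes the curve velocity into the left-trivialized formula \eqref{eq:left-conn} and then obtains the covector identity by duality. Your expansion in the left-invariant frame $e_a^{\mathrm L}$ is a more careful justification of that substitution for fields defined only along the curve, and your duality step via $\frac{d}{dt}\eta(W)=(\frac{D}{dt}\eta)(W)+\eta(\frac{D}{dt}W)$ is exactly the intended argument.
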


From Lemma \ref{cov-der-curve} we immediately obtain the parallel transport.

\begin{corollary}[Parallel transport]\label{cor-parr}
Along a curve \(g(\cdot)\), the parallel transport operator \(\Gamma(g)_s^t:T_{g(s)}G\to T_{g(t)}G\) is given by
\begin{equation}\label{eq:parr-left}
\Gamma(g)_s^t
=
d(\mathrm{L}_{g(t)})\,
\mathcal T\exp\left( -\int_s^t \nabla^{\mathrm{L}}_{d(\mathrm{L}_{g(r)^{-1}})g'(r)}\,dr \right)
\,d(\mathrm{L}_{g(s)^{-1}}),
\end{equation}
where $\mathcal T \exp$ is the ordered exponential.
Its dual version \(\Gamma^*(g)_s^t:T^*_{g(s)}G\to T^*_{g(t)}G\) is given by
\begin{equation}\label{eq:parr-left-dual}
\Gamma^*(g)_s^t
=
d(\mathrm{L}_{g(t)^{-1}})^*\,
\mathcal T\exp\left( \int_s^t \nabla^{\mathrm{L}*}_{d(\mathrm{L}_{g(r)^{-1}})g'(r)}\,dr \right)
\,d(\mathrm{L}_{g(s)})^*.
\end{equation}
\end{corollary}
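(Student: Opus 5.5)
The plan is to reduce parallel transport to a linear ODE on the fixed vector space \(\mathfrak g\), using Lemma~\ref{cov-der-curve}, and then solve that ODE with an ordered exponential.

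Let \(W(t)\) be a vector field along \(g(\cdot)\). Left-trivialize it by setting \(w(t):=d(\mathrm L_{g(t)^{-1}})W(t)\in\mathfrak g\), and write \(\xi(r):=d(\mathrm L_{g(r)^{-1}})g'(r)\) for the left-trivialized velocity. Since \(d(\mathrm L_{g(t)})\) is an isomorphism, \eqref{eq:cov-curve-left} shows that \(W\) is parallel, i.e. \(\frac{D}{dt}W=0\), if and only if
\begin{equation*}
\dot w(t) = -\nabla^{\mathrm L}_{\xi(t)}w(t).
\end{equation*}
This is a linear ODE on \(\mathfrak g\) with time-dependent coefficient operator \(A(t):=-\nabla^{\mathrm L}_{\xi(t)}\in\mathfrak{gl}(\mathfrak g)\), which is linear in \(\xi\) because \(\nabla^{\mathrm L}\) is bilinear.

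By standard linear ODE theory, the solution with initial value \(w(s)\) is \(w(t)=U(t,s)w(s)\). Here \(U(t,s)=\mathcal T\exp\bigl(\int_s^t A(r)\,dr\bigr)\) is the propagator, characterized by \(\partial_tU(t,s)=A(t)U(t,s)\) and \(U(s,s)=\mathrm{id}\). The ordered exponential is by definition this propagator, with later times placed to the left. Undoing the trivialization, \(W(t)=d(\mathrm L_{g(t)})\,U(t,s)\,d(\mathrm L_{g(s)^{-1}})W(s)\). By uniqueness of parallel fields with prescribed initial value, this is \(\Gamma(g)_s^t W(s)\), which gives \eqref{eq:parr-left}.

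For the dual statement I will argue in the same way from \eqref{eq:cov-curve-left-dual}. Set \(\alpha(t):=d(\mathrm L_{g(t)})^*\eta(t)\in\mathfrak g^*\). Then \(\eta\) is parallel if and only if \(\dot\alpha=\nabla^{\mathrm L*}_{\xi(t)}\alpha\), whose propagator is \(\mathcal T\exp\bigl(\int_s^t\nabla^{\mathrm L*}_{\xi(r)}dr\bigr)\). Transporting back with \(d(\mathrm L_{g(t)^{-1}})^*\), which is the inverse of \(d(\mathrm L_{g(t)})^*\), yields \eqref{eq:parr-left-dual}.

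As a consistency check, \(\nabla^{\mathrm L*}_\xi=-A^*\). Hence the dual propagator is the inverse transpose of \(U\), so the pairing \(\eta(W)\) is preserved along the curve, as it must be for the induced connection on \(T^*G\).

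The only delicate point is the ordering convention of \(\mathcal T\exp\) and the signs it produces. I will fix the convention by the defining ODE \(\partial_t U=A(t)U\). The duality check above then serves as a verification: the dual propagator should come out as \(\bigl(U(t,s)^{-1}\bigr)^*\), and with a consistent convention it does, since both sides satisfy the same ODE with the same initial value.
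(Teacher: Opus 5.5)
Your proposal is correct and follows essentially the same route as the paper. You left-trivialize via \eqref{eq:cov-curve-left} to obtain the linear ODE \(\dot w=-\nabla^{\mathrm L}_{\xi(t)}w\) on \(\mathfrak g\), solve it by the ordered exponential (with the same later-times-to-the-left convention as the paper's remark), and repeat the argument with \eqref{eq:cov-curve-left-dual} for covectors; your additional check that the covector propagator equals \(\bigl(U(t,s)^{-1}\bigr)^*\), so that the pairing is preserved, is a sound sign verification that the paper leaves implicit.
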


\begin{remark}
The ordered exponential \(\mathcal T \exp\) is defined by the infinite series
\begin{equation*}
  \mathcal T \exp\left( \int_s^t A(r) dr \right)
  = \sum_{n=0}^\infty \int_s^t \int_s^{r_1} \cdots \int_s^{r_{n-1}}
  A(r_1) A(r_2) \cdots A(r_n) \, dr_n \cdots dr_2 dr_1,
\end{equation*}
where \(A(\cdot)\) is a time-dependent linear operator on a finite-dimensional vector space (or, more generally, an endomorphism-valued function). This operator is the unique solution of the initial value problem
\begin{equation*}
  \frac{d}{dt} Y(t) = A(t) Y(t), \qquad Y(s) = I,
\end{equation*}
in the sense that \(Y(t) = \mathcal T \exp\left( \int_s^t A(r) dr \right)\).
\end{remark}

\begin{proof}
Equation \eqref{eq:cov-curve-left} shows that \(\frac{D}{dt}W_{g(t)}=0\) is equivalent to 
\[
\left( \frac{d}{dt} + \nabla^{\mathrm{L}}_{g(t)^{-1} g'(t)} \right) g(t)^{-1} W_{g(t)} = 0.
\]
Solving for \(g(t)^{-1} W_{g(t)}\) yields 
\begin{equation*}
  g(t)^{-1} W_{g(t)} = \mathcal T \exp\left( - \int_s^t \nabla^{\mathrm{L}}_{g(r)^{-1} g'(r)} dr \right) g(s)^{-1} W_{g(s)}.
\end{equation*}
Equation \eqref{eq:parr-left} for \(\Gamma(g)_s^t\) follows. The dual version follows from \eqref{eq:cov-curve-left-dual}.
\end{proof}

The next formulas follow by inserting \eqref{eq:left-conn} into the definitions of curvature and torsion and using the left-trivialized bracket identity \eqref{bracket-basis}.
\begin{lemma}[Curvature and torsion]\label{cur-tor-left}
For any vector fields \(U,V,W\) on \(G\) and $g\in G$,
\begin{gather}
\begin{aligned}
(R(U,V)W)_g
= d(\mathrm{L}_g)\Big( &
\nabla^{\mathrm{L}}_{d(\mathrm{L}_{g^{-1}})U_g}
\nabla^{\mathrm{L}}_{d(\mathrm{L}_{g^{-1}})V_g}
\,d(\mathrm{L}_{g^{-1}})W_g - \nabla^{\mathrm{L}}_{d(\mathrm{L}_{g^{-1}})V_g}
\nabla^{\mathrm{L}}_{d(\mathrm{L}_{g^{-1}})U_g}
\,d(\mathrm{L}_{g^{-1}})W_g \\
& - \nabla^{\mathrm{L}}_{[d(\mathrm{L}_{g^{-1}})U_g,\,d(\mathrm{L}_{g^{-1}})V_g]}
\,d(\mathrm{L}_{g^{-1}})W_g \Big),
\end{aligned} 
\notag \\
\begin{aligned}
T(V,W)_g =
d(\mathrm{L}_g)\Big( &
\nabla^{\mathrm{L}}_{d(\mathrm{L}_{g^{-1}})V_g}
\,d(\mathrm{L}_{g^{-1}})W_g
- \nabla^{\mathrm{L}}_{d(\mathrm{L}_{g^{-1}})W_g}
\,d(\mathrm{L}_{g^{-1}})V_g - [d(\mathrm{L}_{g^{-1}})V_g,\,d(\mathrm{L}_{g^{-1}})W_g]_{\mathfrak g}
\Big).
\end{aligned} \label{eq:torsion-left}
\end{gather}
\end{lemma}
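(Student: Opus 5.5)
The plan is to work entirely in the left trivialization. Every vector field is written as \(V_g = g\,\hat V(g)\), where \(\hat V := d(\mathrm{L}_{g^{-1}})V\) is a \(\mathfrak g\)-valued function on \(G\); define \(\hat U,\hat W\) in the same way. With this notation, \eqref{eq:left-conn} says that \(\nabla_V W = g\,\widehat{\nabla_V W}\) with
\[
\widehat{\nabla_V W} = V(\hat W) + \nabla^{\mathrm L}_{\hat V}\hat W,
\]
where \(V(\hat W)\) is the componentwise directional derivative of the \(\mathfrak g\)-valued function \(\hat W\) along \(V\). Similarly, \eqref{bracket-basis} gives
\[
\widehat{[U,V]} = U(\hat V) - V(\hat U) + [\hat U,\hat V]_{\mathfrak g}.
\]
Both identities are taken as given. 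What remains is bookkeeping in the definitions \(R(U,V)W=\nabla_U\nabla_V W-\nabla_V\nabla_U W-\nabla_{[U,V]}W\) and \(T(V,W)=\nabla_V W-\nabla_W V-[V,W]\).

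\textbf{Torsion.} I apply the trivialized formula twice and subtract the bracket. The derivative terms \(V(\hat W)-W(\hat V)\) coming from the two covariant derivatives cancel exactly against the same terms in \(\widehat{[V,W]}\). This leaves
\[
\nabla^{\mathrm L}_{\hat V}\hat W-\nabla^{\mathrm L}_{\hat W}\hat V-[\hat V,\hat W]_{\mathfrak g},
\]
and multiplying by \(g\) gives \eqref{eq:torsion-left}.

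\textbf{Curvature.} I expand the three terms separately.

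First, since \(\widehat{\nabla_V W}=V(\hat W)+\nabla^{\mathrm L}_{\hat V}\hat W\),
\[
\widehat{\nabla_U\nabla_V W} = U\bigl(V(\hat W)+\nabla^{\mathrm L}_{\hat V}\hat W\bigr) + \nabla^{\mathrm L}_{\hat U}\bigl(V(\hat W)+\nabla^{\mathrm L}_{\hat V}\hat W\bigr).
\]
Because \(\nabla^{\mathrm L}\) is a fixed bilinear map on \(\mathfrak g\) and does not depend on \(g\), the Leibniz rule gives
\[
U(\nabla^{\mathrm L}_{\hat V}\hat W)=\nabla^{\mathrm L}_{U(\hat V)}\hat W+\nabla^{\mathrm L}_{\hat V}U(\hat W).
\]
Therefore \(\widehat{\nabla_U\nabla_V W}\) is the sum of
\[
UV(\hat W),\quad \nabla^{\mathrm L}_{U(\hat V)}\hat W,\quad \nabla^{\mathrm L}_{\hat V}U(\hat W),\quad \nabla^{\mathrm L}_{\hat U}V(\hat W),\quad \nabla^{\mathrm L}_{\hat U}\nabla^{\mathrm L}_{\hat V}\hat W.
\]
Antisymmetrizing in \(U,V\), the two mixed first-order terms cancel pairwise. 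The second-order terms combine into \([U,V](\hat W)\). The remaining pieces are
\[
\nabla^{\mathrm L}_{U(\hat V)-V(\hat U)}\hat W \quad\text{and}\quad \nabla^{\mathrm L}_{\hat U}\nabla^{\mathrm L}_{\hat V}\hat W-\nabla^{\mathrm L}_{\hat V}\nabla^{\mathrm L}_{\hat U}\hat W.
\]

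Second, inserting \(\widehat{[U,V]}\) into the trivialized connection formula gives
\[
\widehat{\nabla_{[U,V]}W}=[U,V](\hat W)+\nabla^{\mathrm L}_{U(\hat V)-V(\hat U)}\hat W+\nabla^{\mathrm L}_{[\hat U,\hat V]_{\mathfrak g}}\hat W.
\]

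Subtracting, the \([U,V](\hat W)\) terms cancel and the \(\nabla^{\mathrm L}_{U(\hat V)-V(\hat U)}\hat W\) terms cancel. What survives is exactly the stated expression inside \(d(\mathrm{L}_g)\).

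\textbf{Main obstacle.} The only delicate point is keeping track of which derivative terms cancel in the curvature computation. It is easy to miscount them, in particular to forget the Leibniz contribution \(\nabla^{\mathrm L}_{U(\hat V)}\hat W\). The check is that tensoriality forces every derivative of \(\hat U,\hat V,\hat W\) to disappear, so the final formula must be purely algebraic in the trivialized fields, as the statement asserts.
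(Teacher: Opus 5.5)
Your proposal is correct and takes the same approach as the paper, which inserts the left-trivialized connection formula \eqref{eq:left-conn} into the definitions of $R$ and $T$ and uses the trivialized bracket identity \eqref{bracket-basis}. Your explicit tracking of the cancelling derivative terms spells out the computation the paper leaves implicit. This includes the Leibniz term $\nabla^{\mathrm L}_{U(\hat V)}\hat W$, which cancels against the corresponding piece of $\nabla_{[U,V]}W$.
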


In particular, for any \(u,v,w\in \mathfrak g\),
\begin{gather*}
R(u^{\mathrm L},v^{\mathrm L})w^{\mathrm L}
=
\left(
\nabla^{\mathrm{L}}_u\nabla^{\mathrm{L}}_v w
- \nabla^{\mathrm{L}}_v\nabla^{\mathrm{L}}_u w
- \nabla^{\mathrm{L}}_{[u,v]_{\mathfrak g}}
w
\right)^{\mathrm L}, 
\\
T(v^{\mathrm L},w^{\mathrm L})
=
\left(
\nabla^{\mathrm{L}}_v w - \nabla^{\mathrm{L}}_w v - [v,w]_{\mathfrak g}
\right)^{\mathrm L}. 
\end{gather*}
That is, the curvature and torsion of a left-invariant connection are again left-invariant.

Finally, the interplay between the connection and the group structure is encapsulated in the following commutation relation for two-parameter families.

\begin{lemma}[Exchange of derivatives]
Let \(g_s(t)\) be a smooth two-parameter map into \(G\). 
Then
\begin{equation}\label{eq:exchange-der-left-conn}
\pt_s \left( g_s(t)^{-1} \pt_t g_s(t) \right) - \pt_t \left( g_s(t)^{-1} \pt_s g_s(t) \right) = \left[ g_s(t)^{-1} \pt_t g_s(t), g_s(t)^{-1} \pt_s g_s(t) \right]_\mathfrak g.
\end{equation}
\end{lemma}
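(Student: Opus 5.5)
We plan to prove \eqref{eq:exchange-der-left-conn} by writing both left-trivialized velocities as Maurer--Cartan pullbacks and using the structure equation; equivalently, by a direct computation via \eqref{bracket-basis}. Set
\[
\xi := g_s(t)^{-1}\pt_t g_s(t), \qquad \eta := g_s(t)^{-1}\pt_s g_s(t).
\]
Let $\theta$ denote the left Maurer--Cartan form, $\theta_g(X)=g^{-1}X$, and let $\phi(s,t)=g_s(t)$. Then $\phi^*\theta=\xi\,dt+\eta\,ds$ as a $\mathfrak g$-valued 1-form on the $(s,t)$-domain. Its exterior derivative is
\[
d(\phi^*\theta)=(\pt_s\xi-\pt_t\eta)\,ds\wedge dt .
\]
The whole argument therefore reduces to computing $d\theta$, pulling it back by $\phi$, and evaluating on $(\pt_s,\pt_t)$.

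To compute $d\theta$, use the intrinsic formula $d\theta(V,W)=V\theta(W)-W\theta(V)-\theta([V,W])$ for arbitrary vector fields $V,W$ on $G$. Since $\theta_g(V_g)=g^{-1}V_g$, identity \eqref{bracket-basis} gives
\[
\theta([V,W])=V(g^{-1}W)-W(g^{-1}V)+[\theta(V),\theta(W)]_{\mathfrak g}.
\]
Hence $d\theta(V,W)=-[\theta(V),\theta(W)]_{\mathfrak g}$, which is the Maurer--Cartan structure equation. Pulling back commutes with $d$, and $(\phi^*d\theta)(\pt_s,\pt_t)=d\theta(\pt_s\phi,\pt_t\phi)$. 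Therefore
\[
\pt_s\xi-\pt_t\eta=-[\eta,\xi]_{\mathfrak g}=[\xi,\eta]_{\mathfrak g},
\]
which is the claim. No connection enters, which is consistent with the statement: \eqref{eq:exchange-der-left-conn} is a purely group-theoretic identity.

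The main obstacle is a rigor issue. The curves $\pt_t\phi$ and $\pt_s\phi$ are only fields along $\phi$, not vector fields on $G$, so \eqref{bracket-basis} cannot be applied to them directly. We handle this either by working with forms as above, where naturality of $d$ makes extensions unnecessary, or by extending locally where $\phi$ is an immersion and using density/continuity elsewhere.

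A cleaner alternative for the matrix-group case also covers the general case by using $\ad$-style calculations. Differentiate $\xi=g^{-1}\pt_tg$ in $s$, using $\pt_s(g^{-1})=-g^{-1}(\pt_sg)g^{-1}$, to get
\[
\pt_s\xi=-\eta\xi+g^{-1}\pt_s\pt_tg .
\]
In the same way, $\pt_t\eta=-\xi\eta+g^{-1}\pt_t\pt_sg$. Subtracting and using symmetry of mixed partials gives $\xi\eta-\eta\xi=[\xi,\eta]$. We would present the Maurer--Cartan argument as the proof and mention the matrix computation as a check of the sign convention, namely that the bracket is the left-invariant one from \eqref{Lie-bracket-left}.
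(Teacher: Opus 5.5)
Your proof is correct, but it takes a different route from the paper's.

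\textbf{The paper's route.} The paper stays inside its connection formalism and fixes an arbitrary left-invariant connection. Using \eqref{eq:cov-curve-left}, it writes $\partial_s\xi = g^{-1}\frac{D}{ds}\partial_t g - \nabla^{\mathrm L}_\eta\xi$ and $\partial_t\eta = g^{-1}\frac{D}{dt}\partial_s g - \nabla^{\mathrm L}_\xi\eta$. It then applies the torsion identity $\frac{D}{ds}\partial_t g - \frac{D}{dt}\partial_s g = T(\partial_s g,\partial_t g)$ for two-parameter maps, and substitutes the trivialized torsion \eqref{eq:torsion-left}. All $\nabla^{\mathrm L}$ terms cancel and $-[\eta,\xi]_{\mathfrak g}$ remains.

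\textbf{Your route.} You bypass connections entirely. You derive the structure equation $d\theta(V,W)=-[\theta(V),\theta(W)]_{\mathfrak g}$ directly from \eqref{bracket-basis} and pull back by $\phi$. The paper only mentions this Maurer--Cartan equivalence in the remark after its proof.

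\textbf{Comparison.} Both arguments rest on \eqref{bracket-basis}, since that is what supplies the bracket term in \eqref{eq:torsion-left}. Both also need some device for the fact that $\partial_s\phi$ and $\partial_t\phi$ are only fields along $\phi$. The paper delegates this to the standard torsion lemma for two-parameter maps. You handle it by naturality of $d$ under pullback, which is shorter and self-contained.

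The paper's version shows the identity as the point where connection terms cancel, in keeping with the connection-independence of the reduced equations in Theorem~\ref{thm-EP}. It also makes independence of $\nabla^{\mathrm L}$ explicit rather than automatic.

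Your version makes the sign bookkeeping transparent. By contrast, the paper's prose states the torsion identity with its two sides interchanged, although its displayed computation uses the correct orientation.

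\textbf{Two small points.}
\begin{enumerate}
\item The fallback of extending fields where $\phi$ is an immersion and then using density is unnecessary. It would also need extra care, since the immersion locus of $\phi$ need not be dense. The pullback argument is already complete, so drop the fallback.
\item The matrix computation only establishes the identity for matrix groups. Keep it as a check of the sign convention, as you propose, not as a proof of the general case.
\end{enumerate}
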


\begin{proof}
Using \eqref{eq:cov-curve-left}, the standard identity \(\frac{D}{dt}\partial_s g - \frac{D}{ds}\partial_t g = T(\partial_s g,\partial_t g)\), and \eqref{eq:torsion-left}, one obtains
\begin{equation*}
  \begin{split}
    \text{l.h.s.} &= g_s(t)^{-1} \frac{D}{d s} \pt_t g_s(t) - \nabla^{\mathrm{L}}_{g_s(t)^{-1} \pt_s g_s(t)} \left(g_s(t)^{-1} \pt_t g_s(t)\right) \\
    &\quad\ - \left( g_s(t)^{-1} \frac{D}{d t} \pt_s g_s(t) - \nabla^{\mathrm{L}}_{g_s(t)^{-1} \pt_t g_s(t)} \left(g_s(t)^{-1} \pt_s g_s(t)\right) \right) \\
    &= g_s(t)^{-1} T\left( \pt_s g_s(t), \pt_t g_s(t) \right) - \nabla^{\mathrm{L}}_{g_s(t)^{-1} \pt_s g_s(t)} \left(g_s(t)^{-1} \pt_t g_s(t)\right) + \nabla^{\mathrm{L}}_{g_s(t)^{-1} \pt_t g_s(t)} \left(g_s(t)^{-1} \pt_s g_s(t)\right) \\
    &= \text{r.h.s.}.
  \end{split}
\end{equation*}
The proof is completed.
\end{proof}

\begin{remark}
Equation \eqref{eq:exchange-der-left-conn} is independent of the choice of left-invariant covariant derivatives, and is equivalent to the well-known Maurer--Cartan equations for the left Maurer--Cartan form.
\end{remark}

\section{Geometric mechanics}\label{sec-geom-mech}

In this section, we first formulate the global equations of motion for Lagrangian and Hamiltonian mechanics on a manifold equipped with a linear connection. We then apply the theory of invariant connections on Lie groups developed in the previous section to derive the corresponding reduced equations directly from these global equations. A notable outcome is that the reduced Euler--Poincar\'e and Lie--Poisson equations are independent of the choice of connection, which clarifies the consistency with the classical local-coordinate formulations.

\subsection{Least action principles}

Let \(Q\) be a smooth manifold equipped with a linear connection \(\nabla\) with torsion \(T\). 

\subsubsection*{Hamilton's principle}

Consider a regular Lagrangian \(L:\mathbb \mathrm{R}_+ \times TQ\to\mathbb R\) and the action functional
\[
\mathcal S[\gamma] = \int_0^T L(t,\gamma(t),\dot\gamma(t))\,dt,
\]
defined for curves \(\gamma\in C^2([0,T],Q)\) satisfying fixed endpoint conditions \(\gamma(0)=q_0,\;\gamma(T)=q_T\). An infinitesimal variation of \(\gamma\) is a one-parameter family of curves \(\{\gamma_\epsilon\}\) in $C^2([0,T],Q)$ with \(\gamma_0=\gamma\) and 
\begin{equation}\label{eq:inf-var}
  \delta\gamma(t) :=\frac{\partial}{\partial\epsilon}\bigg|_{\epsilon=0}\gamma_\epsilon(t)
\end{equation}
vanishing at the endpoints. The variation of the velocity is given by
\begin{equation}\label{eq:variation-der}
\delta\dot\gamma(t):=\frac{D}{d \e}\bigg|_{\e=0} \gamma'_\e(t) = \nabla_{\delta\gamma(t)}\gamma'(t)=\frac{D}{dt}\delta\gamma(t)-T(\gamma'(t),\delta\gamma(t)).
\end{equation}

\begin{proposition}[Hamilton's principle by connections {\cite{GSS03}}]
A curve \(\gamma\in C^2([0,T],Q)\) is a critical point of the action \(\mathcal S\) if and only if it satisfies the global Euler--Lagrange equation
\begin{equation}\label{eq:EL-1}
\frac{D}{dt} (d_{\dot q}L) + d_{\dot q}L \left( i_{\gamma'(t)}T \right) = d_q^\nabla L.
\end{equation}
Here \(d_q^\nabla L\) and \(d_{\dot q}L\) denote the horizontal and vertical differentials of \(L\) (see definitions \eqref{eq:vert-diff} and \eqref{eq:hor-diff-1}).
\end{proposition}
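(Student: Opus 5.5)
The plan is to compute the first variation of \(\mathcal S\) intrinsically, using the horizontal/vertical splitting of \(dL\) together with the variation formula \eqref{eq:variation-der}. Let \(\{\gamma_\e\}\) be an infinitesimal variation with variation field \(\delta\gamma\) vanishing at \(t=0,T\). I regard \(\e\mapsto(\gamma_\e(t),\gamma_\e'(t))\) as a curve in \(TQ\) lying over the base curve \(\e\mapsto\gamma_\e(t)\). Its velocity at \(\e=0\) splits, via the connection \(\Gamma\) on \(TQ\), into a horizontal part and a vertical part. The horizontal part is the horizontal lift of \(\delta\gamma(t)\). The vertical part is the covariant derivative \(\frac{D}{d\e}|_{\e=0}\gamma_\e'(t)=\delta\dot\gamma(t)\), identified with a vertical vector through the canonical vertical lift.

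Applying \(dL\) to this splitting gives
\[
\frac{d}{d\e}\Big|_{\e=0}L(t,\gamma_\e,\gamma_\e')=d_q^\nabla L(\delta\gamma)+d_{\dot q}L(\delta\dot\gamma).
\]
The first term holds by the definition \(d_q^\nabla L=\Gamma^*dL\), or equivalently by Lemma~\ref{lem:horizontal-differential}. The second term holds because \(J\) sends the vertical lift back to the base vector. Alternatively, I can obtain the same identity by applying the chain rule \eqref{chain-rule} to an extension of \(\gamma'_\e\), or by a direct check in coordinates from \eqref{eq:hor-diff-1}: there the \(\Gamma^k_{ij}\dot x^j\partial_{\dot x^k}L\) terms cancel against those in \(D\dot\gamma/d\e\).

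Next I substitute \eqref{eq:variation-der}, \(\delta\dot\gamma=\frac{D}{dt}\delta\gamma-T(\gamma',\delta\gamma)\), and integrate by parts along \(\gamma\) in the covariant sense. Since \(d_{\dot q}L\) evaluated along \((\gamma,\gamma')\) is a 1-form along \(\gamma\), the Leibniz rule gives
\[
\frac{d}{dt}\big[d_{\dot q}L(\delta\gamma)\big]=\Big(\frac{D}{dt}d_{\dot q}L\Big)(\delta\gamma)+d_{\dot q}L\Big(\frac{D}{dt}\delta\gamma\Big).
\]
The boundary term drops out because \(\delta\gamma(0)=\delta\gamma(T)=0\). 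The torsion term can be written as \(d_{\dot q}L(T(\gamma',\delta\gamma))=\big(d_{\dot q}L(i_{\gamma'}T)\big)(\delta\gamma)\). This yields
\[
\delta\mathcal S=\int_0^T\Big(d_q^\nabla L-\frac{D}{dt}d_{\dot q}L-d_{\dot q}L(i_{\gamma'(t)}T)\Big)(\delta\gamma(t))\,dt.
\]

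Finally, I apply the fundamental lemma of the calculus of variations. Every smooth vector field along \(\gamma\) vanishing at the endpoints arises as some \(\delta\gamma\), for instance from \(\gamma_\e(t)=\exp^\nabla_{\gamma(t)}(\e\,\delta\gamma(t))\). Hence \(\delta\mathcal S=0\) for all variations if and only if \eqref{eq:EL-1} holds pointwise; the converse direction is immediate from the displayed formula.

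The main obstacle is the first step: justifying rigorously that the \(\e\)-derivative splits exactly as horizontal differential on \(\delta\gamma\) plus vertical differential on the covariant variation \(\delta\dot\gamma\). This identification of the connector of the variation with \(\frac{D}{d\e}\gamma_\e'\) is where the connection enters. I would settle it by the local coordinate check against \eqref{eq:hor-diff-1}. The sign convention for \(T\) in \eqref{eq:variation-der} is taken as given.
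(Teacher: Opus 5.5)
Your proposal is correct and follows the paper's proof essentially step for step. The paper also writes the first variation as $d_q^\nabla L(\delta\gamma)+d_{\dot q}L(\delta\dot\gamma)$, citing the chain rule \eqref{chain-rule}, which encodes the horizontal/vertical splitting you describe; it then substitutes \eqref{eq:variation-der}, integrates the $\frac{D}{dt}\delta\gamma$ term by parts with vanishing boundary terms, and concludes from the arbitrariness of $\delta\gamma$, while your justification that every such $\delta\gamma$ is realized by a variation (via $\exp^\nabla$) simply makes explicit a step the paper leaves implicit.
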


\begin{proof}
Compute the first variation of the action by the chain rule \eqref{chain-rule} and \eqref{eq:variation-der},
\begin{equation*}
  \begin{split}
    \delta \mathcal S[\gamma] &= \frac{d}{d\e}\bigg|_{\e=0} \mathcal S[\gamma_\e] = \int_0^T \frac{\pt}{\pt\e}\bigg|_{\e=0} L(t, \gamma_\e(t), \dot\gamma_\e(t)) dt \\
    &= \int_0^T \left[ d_q^\nabla L (\delta \gamma(t)) + d_{\dot q} L (\delta \dot\gamma(t)) \right] dt \\
    &= \int_0^T \left[ d_q^\nabla L (\delta \gamma(t)) + d_{\dot q} L \left( \frac{D}{dt} \delta \gamma(t) \right) - d_{\dot q} L (T(\gamma'(t), \delta \gamma(t))) \right] dt.
  \end{split}
\end{equation*}
Integrate by parts the term containing \(\frac{D}{dt}\delta\gamma\). Because \(\delta\gamma\) vanishes at the endpoints, the boundary terms disappear and one obtains
\[
\delta\mathcal S[\gamma]=\int_0^T \left[ d_q^\nabla L - \frac{D}{dt} (d_{\dot q} L) - d_{\dot q} L \left( i_{\gamma'(t)} T \right) \right] (\delta \gamma(t)) dt.
\]
Since \(\delta\gamma\) is arbitrary, the integrand must vanish, yielding \eqref{eq:EL-1}.
\end{proof}


\begin{remark}
  One can bundle the two terms $\nabla_{\gamma'(t)} \left( d_{\dot x} \mathrm{L}_0 \right)$ and $d_{\dot q}L \left( i_{\gamma'(t)}T \right)$ via the notion of opposite connections \cite[Section 2.7]{DR89}. The opposite connection of $\nabla$, denoted by $\widehat{\nabla}$, 
  when acting on a 1-form $\alpha$, is given by
  \(
    \widehat{\nabla}_U \alpha = \nabla_U \alpha + \alpha(i_U T).
  \)
  Thus, the global Euler--Lagrange equation \eqref{eq:EL-1} can be shorten as
  \begin{equation*}
    \frac{\widehat{D}}{dt} (d_{\dot q}L) = d_q^\nabla L,
  \end{equation*}
  where $\frac{\widehat{D}}{dt} = \frac{\pt}{\pt t} + \widehat{\nabla}_{\gamma'(t)}$.
\end{remark}

\subsubsection*{Hamilton's principle on the phase space}

On the cotangent bundle \(T^*Q\) we consider a Hamiltonian \(H:T^*Q\times \mathbb \mathrm{R}_+ \to\mathbb R\) and the phase-space action functional
\[
\overline{\mathcal S}[q(\cdot), p(\cdot)]=\int_0^T\bigl[p(t)(\dot q(t))-H(q(t),p(t),t)\bigr]dt,
\]
where \(p(\cdot)\) is a time-dependent 1-forms along a curve \(q(\cdot)\) in \(Q\), again with fixed configuration endpoints \(q(0)=q_0,\;q(T)=q_T\). The variation of \(\overline{\mathcal S}\) is computed analogously, using the horizontal differentials and vertical derivatives on \(T^*Q\) (definitions \eqref{eq:vert-grad} and \eqref{eq:hor-diff-2}).

This calculation leads to the following connection-based form of Hamilton's principle; details of the variation are given in Appendix~\ref{sec-app-selected-proofs}.
\begin{proposition}[Hamilton's principle on the phase space by connections]\label{prop:hamilton-principle-phase}
A curve \((q(\cdot),p(\cdot))\in C^2([0,T],T^*Q)\) is a critical point of the phase-space action \(\overline{\mathcal S}\) if and only if it satisfies the global Hamilton's equations
\begin{equation}\label{eq:HE-1}
\left\{
\begin{aligned}
&q'(t) = \pt_p H,\\
&\frac{D}{dt}p(t) + p(t)\left( i_{q'(t)}T \right) = - d_q^\nabla H.
\end{aligned}
\right.
\end{equation}
Here \(\pt_p H\) is the vertical derivative of \(H\) and \(d_q^\nabla H\) its horizontal differential (see definitions \eqref{eq:vert-grad} and \eqref{eq:hor-diff-2}).
\end{proposition}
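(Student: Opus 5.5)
We mirror the proof of the Lagrangian version (the global Euler--Lagrange equation \eqref{eq:EL-1}), treating the phase-space action as a functional on pairs \((q(\cdot),p(\cdot))\). Take a variation \((q_\e(\cdot),p_\e(\cdot))\) with \(q_0=q\), \(p_0=p\), and \(\delta q\) vanishing at \(t=0,T\). The covector variation is \(\delta p(t):=\frac{D}{d\e}\big|_{\e=0}p_\e(t)\in T^*_{q(t)}Q\), which is unconstrained at the endpoints. Because the covariant derivative splits the variation of a covector field along the two-parameter map into a base direction and a fibre direction, the pair \((\delta q,\delta p)\) ranges over arbitrary (fibre-)independent data.

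\textbf{Step 1: the Hamiltonian term.} By Lemma~\ref{lem:horizontal-differential}, or equivalently the chain rule \eqref{chain-rule-2} applied pointwise in \(t\), we get
\[
\frac{\pt}{\pt\e}\Big|_{\e=0}H(q_\e,p_\e,t)=d_q^\nabla H(\delta q)+\delta p\bigl(\pt_pH\bigr).
\]
The justification is to decompose the \(\e\)-velocity of \(\e\mapsto p_\e(t)\in T^*Q\) into its horizontal part, which is the horizontal lift of \(\delta q\), and its vertical part, which is \(\delta p\). The horizontal part is then evaluated by \(d_q^\nabla H\) through \eqref{eq:hor-diff-2}, and the vertical part by the fibre derivative \(\pt_pH\) through \eqref{eq:vert-grad}. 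In coordinates this is a one-line check: the \(\Gamma_{ij}^kp_k\) terms in \(\delta p\) and in \(d_q^\nabla H\) cancel against each other.

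\textbf{Step 2: the pairing term.} By metric-free Leibniz for the pairing of a covector with a vector along the two-parameter map,
\[
\frac{\pt}{\pt\e}\Big|_{\e=0}p_\e(\dot q_\e)=\delta p(\dot q)+p\Bigl(\frac{D}{d\e}\Big|_{\e=0}\dot q_\e\Bigr).
\]
We then use \eqref{eq:variation-der} to write the second term as
\[
p\Bigl(\tfrac{D}{dt}\delta q\Bigr)-p\bigl(T(q',\delta q)\bigr).
\]
Integrating by parts with \(\frac{d}{dt}\,p(\delta q)=\bigl(\tfrac{D}{dt}p\bigr)(\delta q)+p\bigl(\tfrac{D}{dt}\delta q\bigr)\), and using \(\delta q(0)=\delta q(T)=0\), the boundary term drops. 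The first variation becomes
\[
\delta\overline{\mathcal S}=\int_0^T\Bigl[\delta p\bigl(q'-\pt_pH\bigr)-\Bigl(\tfrac{D}{dt}p+p(i_{q'}T)+d_q^\nabla H\Bigr)(\delta q)\Bigr]dt .
\]

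\textbf{Step 3: conclusion.} Since \(\delta p\) and \(\delta q\) are independent and arbitrary (the latter with zero endpoints), the fundamental lemma yields both equations of \eqref{eq:HE-1}. Conversely, if \eqref{eq:HE-1} holds, the variation vanishes identically.

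\textbf{Main obstacle.} The main obstacle is Step 1. We must make precise that \(\delta p\), defined as the covariant \(\e\)-derivative, is exactly the vertical component of the variation with respect to the induced connection on \(T^*Q\). We also need that the chain rule holds for covector fields along a two-parameter map rather than for a global 1-form \(\eta\) as in \eqref{chain-rule-2}. I would handle this by a direct coordinate computation using \eqref{eq:hor-diff-2}, or by extending \(p_\e\) locally to a 1-form and invoking \eqref{chain-rule-2} with \(\nabla_{\delta q}\eta=\delta p\).
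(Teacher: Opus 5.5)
Your proposal is correct and follows the paper's proof essentially step for step. Your $\delta p=\frac{D}{d\epsilon}p_\epsilon|_{\epsilon=0}$ is the paper's $\nabla_{\delta q}p$, the Hamiltonian term is expanded by the chain rule \eqref{chain-rule-2}, and the pairing term is handled via \eqref{eq:variation-der} and integration by parts, with independence of $\delta q$ and $\delta p$ giving both equations. Your remarks on extending $p_\epsilon$ to a local 1-form, and on the cancellation of the $\Gamma_{ij}^k p_k$ terms in coordinates, make explicit a point the paper passes over silently.
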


Equations \eqref{eq:EL-1} and \eqref{eq:HE-1} are the natural generalizations of the Euler--Lagrange and Hamilton's equations to manifolds endowed with an arbitrary linear connection. They can be transformed into each other by the Legendre transform: $p = d_{\dot q}L, H(q,p)+L(q,\dot q) = p(\dot q)$. Regardless of the linear connection used, in any local coordinates, they reduce to the familiar forms
\[
\frac{d}{dt} \left( \frac{\pt L}{\pt \dot q} \right) = \frac{\pt L}{\pt q},\qquad
\dot q = \frac{\pt H}{\pt p}, \dot p = - \frac{\pt H}{\pt q}.
\]

For a Lie group equipped with a Cartan--Schouten connection, these equations will be further reduced to the Euler--Poincar\'e and Lie--Poisson equations, as shown in the following sections.

\subsection{Euler--Poincar\'e and Lie--Poisson reductions}\label{subsec-red}

Let \(G\) be a Lie group. In this subsection, we derive the reduced equations of motion on the Lie algebra \(\mathfrak g\) and its dual \(\mathfrak g^*\) from the global equations of Lagrangian and Hamiltonian formulations. 

Let \(L:\R_+ \times TG\to\mathbb R\) be a Lagrangian which may depend explicitly on time.
For a curve \(g(\cdot)\) on $G$, define the \emph{left-trivialized velocity}
\begin{equation*}
v_-(t) := g(t)^{-1} \dot g(t) \in \mathfrak g.
\end{equation*}
Write the Lagrangian in terms of the reduced coordinates $(g,v)$ on $G\times\mathfrak g$ as 
$$l_-: \R_+ \times G\times\mathfrak g \to \R, \quad l_-(g,v)=L(g,gv).$$
Define the partial differentials of \(l_-\) by
\begin{equation*}
  \frac{\delta l_-}{\delta g} := \frac{\pt l_-}{\pt g} dg \in T^*_g G, \quad \frac{\delta l_-}{\delta v} := \frac{\pt l_-}{\pt v} \in \mathfrak g^*.
\end{equation*}
It is clear that
\begin{equation}\label{vert-diff-Lie}
  d_{\dot g} L = d\left( \mathrm{L}_{g^{-1}} \right)^*_g \frac{\delta l_-}{\delta v}.
\end{equation}

For a curve \(g(\cdot)\) on \(G\), consider a smooth variation \(g_\epsilon(\cdot)\) of \(g(\cdot)\) with fixed endpoints and define the corresponding (left-translated) infinitesimal variation in the Lie algebra:
\begin{equation}\label{eq:w-left}
w_-(t) = g(t)^{-1}\delta g(t),\qquad \delta g(t)=\frac{\partial}{\partial\epsilon}\bigg|_{\epsilon=0}g_\epsilon(t).
\end{equation}
Then \(w_-(0)=w_-(T)=0\). The variation of the reduced velocity follows from the commutation relation \eqref{eq:exchange-der-left-conn}:
\begin{equation}\label{eq:delta-v-left}
\delta v_-(t) = \frac{\partial}{\partial\epsilon}\bigg|_{\epsilon=0}\bigl(g_\epsilon(t)^{-1}\dot g_\epsilon(t)\bigr)
= \dot w_-(t) + [v_-(t),w_-(t)]_{\mathfrak g}
= \dot w_-(t) + \ad_{v_-(t)} w_-(t).
\end{equation}

\begin{lemma}[Left Euler--Poincar\'e equation {\cite[Proposition 7.3]{HSS09}}]
A curve \(g(\cdot)\in C^2([0,T],G)\) is a critical point of the action \(\mathcal S[g(\cdot)]=\int_0^T L(t,g(t),\dot g(t))dt\) under fixed-endpoint variations if and only if the reduced variables \((g(\cdot),v_-(\cdot))\) satisfy the left Euler--Poincar\'e equation
\begin{equation}\label{eq:EP-left}
  \frac{d}{dt} \left( \frac{\delta l_-}{\delta v} \right) - \ad_{v_-(t)}^* \left( \frac{\delta l_-}{\delta v} \right) = d\left( \mathrm{L}_{g(t)} \right)^*_e \left( \frac{\delta l_-}{\delta g} \right).
\end{equation}
\end{lemma}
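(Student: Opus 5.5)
The plan is a direct first-variation computation in reduced variables; it does not need the connection at all. Because $L(t,g,\dot g)=l_-(t,g,g^{-1}\dot g)$, the action can be rewritten as
\[
\mathcal S[g(\cdot)]=\int_0^T l_-(t,g(t),v_-(t))\,dt.
\]
For a fixed-endpoint variation $g_\epsilon$, differentiate under the integral sign with the chain rule in the product $G\times\mathfrak g$. This gives
\[
\delta\mathcal S=\int_0^T\Big[\tfrac{\delta l_-}{\delta g}(\delta g)+\tfrac{\delta l_-}{\delta v}(\delta v_-)\Big]dt .
\]
Here $\frac{\delta l_-}{\delta g}\in T^*_gG$ is the partial differential with $v$ held fixed.

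Next we put everything in terms of the Lie-algebra variation $w_-=g^{-1}\delta g$ from \eqref{eq:w-left}. The first term becomes
\[
\tfrac{\delta l_-}{\delta g}\big(d(\mathrm L_g)_e w_-\big)=\Big(d(\mathrm L_{g})_e^*\tfrac{\delta l_-}{\delta g}\Big)(w_-).
\]
For the second term we use \eqref{eq:delta-v-left}, i.e. $\delta v_-=\dot w_-+\ad_{v_-}w_-$, which the paper derived from the exchange-of-derivatives identity \eqref{eq:exchange-der-left-conn}. This gives
\[
\tfrac{\delta l_-}{\delta v}(\dot w_-)+\Big(\ad^*_{v_-}\tfrac{\delta l_-}{\delta v}\Big)(w_-).
\]
Integrating by parts in $t$ and using $w_-(0)=w_-(T)=0$ kills the boundary terms, so
\[
\delta\mathcal S=\int_0^T\Big\langle -\tfrac{d}{dt}\tfrac{\delta l_-}{\delta v}+\ad^*_{v_-}\tfrac{\delta l_-}{\delta v}+d(\mathrm L_g)_e^*\tfrac{\delta l_-}{\delta g},\;w_-\Big\rangle dt .
\]

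The step needing care is justifying that $w_-$ ranges over all $C^1$ curves in $\mathfrak g$ vanishing at the endpoints. Given such a $w$, the variation $g_\epsilon(t)=g(t)\exp(\epsilon w(t))$ has fixed endpoints and satisfies $g^{-1}\delta g=w$. Conversely, every fixed-endpoint variation yields such a $w_-$. The fundamental lemma of the calculus of variations then shows that $\delta\mathcal S=0$ for all variations exactly when \eqref{eq:EP-left} holds.

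As an optional consistency check, the equation can also be obtained from the global Euler--Lagrange equation \eqref{eq:EL-1} for any left-invariant connection. One substitutes \eqref{vert-diff-Lie}, \eqref{eq:cov-curve-left-dual} and \eqref{eq:torsion-left}, and evaluates $d^\nabla_gL$ via \eqref{eq:hor-diff-1}. The $\nabla^{\mathrm L*}$ terms then cancel, leaving $\ad^*$ from the bracket part of the torsion. This is not needed for the lemma itself.
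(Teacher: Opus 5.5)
Your argument is correct and is essentially the paper's own: the lemma is quoted from \cite[Proposition 7.3]{HSS09} right after the paper sets up $w_-=g^{-1}\delta g$ and $\delta v_-=\dot w_-+\ad_{v_-}w_-$, and the paper writes out the same reduced computation (pair with $w_-$, dualize $\ad_{v_-}$, integrate by parts) in the appendix proof of Lemma~\ref{lem:left-lie-poisson}. Your optional check is Theorem~\ref{thm-EP}, where the paper evaluates $d_g^\nabla L$ via parallel transport (Lemma~\ref{lem:horizontal-differential}) and both the $\nabla^{\mathrm L*}_{v_-}$ and $(\nabla^{\mathrm L}v_-)^*$ terms cancel; one small fix to your main argument is to take $w$ smooth rather than merely $C^1$, so that $g(t)\exp(\epsilon w(t))$ stays in $C^2([0,T],G)$, which is still enough for the fundamental lemma.
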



The relation between the Euler--Lagrange equation for \(L\) and the left Euler--Poincar\'e equation for \(l_-\) is established by expressing the covariant derivative and torsion in terms of left-translations.

\begin{theorem}[Left Euler--Poincar\'e reduction and reconstruction]\label{thm-EP}
Let \(G\) be equipped with a left-invariant connection whose associated left bilinear map is \(\nabla^{\mathrm{L}}\). The global Euler--Lagrange equation
\begin{equation}\label{EL-Lie}
\frac{D}{dt} (d_{\dot g}L) + d_{\dot g}L \left( i_{g'(t)}T \right) = d_g^\nabla L,
\end{equation}
is equivalent to the left Euler--Poincar\'e equation \eqref{eq:EP-left}, via the reconstruction equation \(g'(t) = g(t) v_-(t)\).
\end{theorem}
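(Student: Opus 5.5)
The plan is to left-trivialize every term of the global Euler--Lagrange equation \eqref{EL-Lie} with $d(\mathrm{L}_{g(t)})^*$, i.e. pull it back to $\mathfrak g^*$ along the curve, and check that the connection-dependent pieces cancel, leaving exactly \eqref{eq:EP-left}. Throughout we set $\mu(t):=\frac{\delta l_-}{\delta v}(t,g(t),v_-(t))\in\mathfrak g^*$ and impose $g'=gv_-$. By \eqref{vert-diff-Lie} we have $d_{\dot g}L=d(\mathrm{L}_{g^{-1}})^*\mu$. The dual formula \eqref{eq:cov-curve-left-dual} then gives
\[
d(\mathrm{L}_{g})^*\frac{D}{dt}(d_{\dot g}L)=\dot\mu-\nabla^{\mathrm L*}_{v_-}\mu .
\]

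\textbf{Torsion term.} We evaluate it on $gw$ with $w\in\mathfrak g$ and use \eqref{eq:torsion-left}:
\[
d_{\dot g}L\bigl(T(gv_-,gw)\bigr)=\mu\bigl(\nabla^{\mathrm L}_{v_-}w-\nabla^{\mathrm L}_w v_- -[v_-,w]_{\mathfrak g}\bigr).
\]
So its trivialization is $\nabla^{\mathrm L*}_{v_-}\mu-\mu(\nabla^{\mathrm L}_{\cdot}v_-)-\ad^*_{v_-}\mu$. Adding this to the previous display, the $\nabla^{\mathrm L*}_{v_-}\mu$ terms cancel and the left-hand side of \eqref{EL-Lie} becomes
\[
\dot\mu-\ad^*_{v_-}\mu-\mu(\nabla^{\mathrm L}_{\cdot}v_-).
\]

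\textbf{Horizontal differential.} This is the step I expect to require the most care. I will use Lemma~\ref{lem:horizontal-differential} with $E=TG$. For $w\in\mathfrak g$, pick the curve $h(\epsilon)=g\exp(\epsilon w)$. By Corollary~\ref{cor-parr}, $h^{-1}h'=w$ is constant, so parallel transport of $gv$ along $h$ equals $h(\epsilon)\exp(-\epsilon\nabla^{\mathrm L}_w)v$. Differentiating $l_-\bigl(h(\epsilon),\exp(-\epsilon\nabla^{\mathrm L}_w)v\bigr)$ at $\epsilon=0$ gives
\[
d_g^\nabla L(gw)=\frac{\delta l_-}{\delta g}(gw)-\mu(\nabla^{\mathrm L}_w v),
\]
that is,
\[
d(\mathrm{L}_g)^*d_g^\nabla L=d(\mathrm{L}_g)^*_e\frac{\delta l_-}{\delta g}-\mu(\nabla^{\mathrm L}_{\cdot}v_-).
\]
Here $\frac{\delta l_-}{\delta g}$ is the $g$-partial of $l_-$ with $v$ held fixed in $\mathfrak g$; this identification, with the left-trivialized second argument frozen, is the subtle point. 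The term $-\mu(\nabla^{\mathrm L}_{\cdot}v_-)$ is identical to the one from the torsion computation, so the two cancel.

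\textbf{Conclusion.} Since $d(\mathrm{L}_g)^*$ is an isomorphism, \eqref{EL-Lie} along $g'=gv_-$ is equivalent to $\dot\mu-\ad^*_{v_-}\mu=d(\mathrm{L}_g)^*_e\frac{\delta l_-}{\delta g}$, which is \eqref{eq:EP-left}. For reconstruction, conversely, a solution $v_-$ of \eqref{eq:EP-left}, coupled with $g'=gv_-$, yields a curve $g$ satisfying \eqref{EL-Lie}, because every step above is an equivalence. Explicit time dependence of $L$ only adds $\partial_t$ terms that are trivialized identically on both sides.
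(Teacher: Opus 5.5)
Your proposal is correct and follows the paper's proof: you left-trivialize the same three pieces, namely the covariant derivative via \eqref{eq:cov-curve-left-dual}, the torsion via \eqref{eq:torsion-left}, and the horizontal differential via Lemma~\ref{lem:horizontal-differential} and \eqref{eq:parr-left}, and you get the same two cancellations, of $\nabla^{\mathrm L*}_{v_-}\mu$ and of $\mu(\nabla^{\mathrm L}_{\cdot}v_-)$. The only cosmetic difference is that you evaluate the horizontal differential along $g\exp(\epsilon w)$, which turns the ordered exponential into an ordinary one, whereas the paper differentiates along the variation $g_\epsilon$ directly; both give $-\mu(\nabla^{\mathrm L}_{w}v_-)$ at first order.
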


\begin{proof}
Using \eqref{vert-diff-Lie} and the dual covariant derivative formula \eqref{eq:cov-curve-left-dual}, we have
\[
\frac{D}{dt} (d_{\dot g} L) = d\left( \mathrm{L}_{g^{-1}} \right)^*_g \left( \frac{d}{dt} - \nabla^{\mathrm{L}*}_{v_-} \right) \left( \frac{\delta l_-}{\delta v} \right).
\]
From the torsion formula \eqref{eq:torsion-left}, the torsion term is, applied to an arbitrary vector $\delta g \in T_gG$,
\[
  \begin{split}
    d_{\dot g} L \left( i_{g'} T \right) (\delta g) = d_{\dot g} L \left( T(g',\delta g) \right) &= \frac{\delta l_-}{\delta v} (\nabla^{\mathrm{L}}_{v_-} g^{-1} \delta g - \nabla^{\mathrm{L}}_{g^{-1} \delta g} v_- - \ad_{v_-} g^{-1} \delta g) \\
    &= d\left( \mathrm{L}_{g^{-1}} \right)^*_g \left( \nabla^{\mathrm{L}*}_{v_-} - (\nabla^{\mathrm{L}} v_-)^* - \ad_{v_-}^* \right) \left(\frac{\delta l_-}{\delta v}\right) (\delta g),
  \end{split}
\]
where \((\nabla^{\mathrm{L}} v_-)^*\) denotes the dual of the linear map \(w \mapsto \nabla^{\mathrm{L}}_{w} v_-\). Thus,
\begin{equation*}
  d_{\dot g} L \left( i_{g'} T \right) = d\left( \mathrm{L}_{g^{-1}} \right)^*_g \left( \nabla^{\mathrm{L}*}_{v_-} - (\nabla^{\mathrm{L}} v_-)^* - \ad_{v_-}^* \right) \left( \frac{\delta l_-}{\delta v} \right).
\end{equation*}
Finally, from the evaluation formula of the horizontal lift \eqref{hor-diff-curve} and the parallel transport formula \eqref{eq:parr-left}, the horizontal differential of \(L\) applied to a vector $\delta g \in T_gG$ is
\[
  \begin{split}
    d_g^\nabla L|_{g'}(\delta g) = \frac{d}{d\e}\bigg|_{\e=0} L(\Gamma(g_\cdot)_0^\e g')
    &= \frac{d}{d\e}\bigg|_{\e=0} l_-\left( g_\e, \mathcal T \exp\left( - \int_0^\e \nabla^{\mathrm{L}}_{g_r^{-1} \pt_r g_r} dr \right) v_- \right) \\
    &= \frac{\delta l_-}{\delta g}\bigg|_{(g,v_-)}(\delta g) + \frac{\delta l_-}{\delta v}\bigg|_{(g,v_-)}(- \nabla^{\mathrm{L}}_{w_-} v_-) \\
    &= \frac{\delta l_-}{\delta g}\bigg|_{(g,v_-)}(\delta g) - d\left( \mathrm{L}_{g^{-1}} \right)^*_g (\nabla^{\mathrm{L}} v_-)^* \left(\frac{\delta l_-}{\delta v}\bigg|_{(g,v_-)}\right)(\delta g).
  \end{split}
\]
Hence,
\begin{equation*}
  d_g^\nabla L = \frac{\delta l_-}{\delta g} - d\left( \mathrm{L}_{g^{-1}} \right)^*_g (\nabla^{\mathrm{L}} v_-)^* \left(\frac{\delta l_-}{\delta v} \right)
\end{equation*}
Substituting these three relations into the Euler--Lagrange equation \eqref{EL-Lie} yields the left Euler--Poincar\'e equation \eqref{eq:EP-left}.
\end{proof}

Passing to the Hamiltonian description, we consider a Hamiltonian \(H:T^*G \times \R_+ \to\mathbb R\) and its reduction via left trivialization. For a curve \(g(\cdot), p(\cdot)\) on $T^*G$, define the \emph{left-trivialized momentum}
\begin{equation*}
\mu_-(t) := g(t)^{-1} p(t) = d\left( \mathrm{L}_{g(t)} \right)^*_e p(t)\in\mathfrak g^*.
\end{equation*}
The reduced Hamiltonian \(h_-: G\times\mathfrak g^* \times\R_+ \to \R\) is given in terms of the reduced coordinates $(g,\mu)$ on $G\times\mathfrak g^*$ by
\[
h_-(g,\mu) = H\bigl(g, g\mu\bigr).
\]
Define the partial derivatives of \(h_-\) by
\begin{equation*}
  \frac{\delta h_-}{\delta g} := \frac{\pt h_-}{\pt g} dg \in T^*_g G, \quad \frac{\delta h_-}{\delta \mu} := \frac{\pt h_-}{\pt \mu} \in\mathfrak g.
\end{equation*}
It is clear that
\begin{equation*}
  \pt_p H = d\left( \mathrm{L}_g \right)_e \frac{\delta h_-}{\delta \mu}.
\end{equation*}

Applying the reduced phase-space variational principle now gives the left Lie--Poisson equation; the calculation is included in Appendix~\ref{sec-app-selected-proofs}.
\begin{lemma}[Left Lie--Poisson equation]\label{lem:left-lie-poisson}
A curve \((g(\cdot),p(\cdot))\in C^2([0,T],T^*G)\) is a critical point of the phase-space action \(\overline{\mathcal S}[g(\cdot), p(\cdot)]=\int_0^T\bigl[p(t)(\dot g(t))-H(g(t),p(t),t)\bigr]dt\) under fixed-configuration-endpoint variations if and only if the reduced variables \((g(\cdot),\mu_-(\cdot))\) satisfy the left Lie--Poisson equation
\begin{equation}\label{eq:LP-left}
\dot\mu_-(t) - \ad_{\frac{\delta h_-}{\delta \mu}}^* \mu_-(t) = - d\left( \mathrm{L}_{g(t)} \right)^*_e \left(\frac{\delta h_-}{\delta g}\right).
\end{equation}
\end{lemma}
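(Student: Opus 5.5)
We derive the left Lie--Poisson equation directly from the phase-space action, rewritten in the reduced variables $(g,\mu_-)$ instead of $(g,p)$. The pairing term becomes $p(\dot g) = \mu_-(g^{-1}\dot g) = \mu_-(v_-)$ and the Hamiltonian becomes $h_-(g,\mu_-)$, so
\[
\overline{\mathcal S} = \int_0^T \bigl[\mu_-(t)(v_-(t)) - h_-(g(t),\mu_-(t),t)\bigr]\,dt .
\]
Since $p\mapsto g^{-1}p$ is a fibrewise linear isomorphism $T^*_gG\to\mathfrak g^*$, variations of $(g,p)$ with fixed configuration endpoints correspond bijectively to pairs $(w_-,\delta\mu_-)$. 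Here $w_-=g^{-1}\delta g$ as in \eqref{eq:w-left}, with $w_-(0)=w_-(T)=0$, and $\delta\mu_-$ is an arbitrary curve in $\mathfrak g^*$. Criticality of $\overline{\mathcal S}$ is therefore the same as vanishing of the variation in these variables.

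We compute the first variation term by term.
\begin{enumerate}
\item For the pairing term, $\delta(\mu_-(v_-)) = \delta\mu_-(v_-) + \mu_-(\delta v_-)$. By \eqref{eq:delta-v-left}, $\delta v_- = \dot w_- + \ad_{v_-}w_-$.
\item For the Hamiltonian term, the chain rule on $G\times\mathfrak g^*$ gives
\[
\delta h_- = \frac{\delta h_-}{\delta g}(\delta g) + \delta\mu_-\Bigl(\frac{\delta h_-}{\delta\mu}\Bigr),
\qquad
\frac{\delta h_-}{\delta g}(g w_-) = d(\mathrm L_g)_e^*\Bigl(\frac{\delta h_-}{\delta g}\Bigr)(w_-).
\]
\end{enumerate}
Collecting terms, the first variation is
\[
\delta\overline{\mathcal S}=\int_0^T\Bigl[\delta\mu_-\Bigl(v_- - \frac{\delta h_-}{\delta\mu}\Bigr) + \mu_-(\dot w_-) + \ad^*_{v_-}\mu_-(w_-) - d(\mathrm L_g)_e^*\frac{\delta h_-}{\delta g}(w_-)\Bigr]dt .
\]
Integrating $\mu_-(\dot w_-)$ by parts produces $-\dot\mu_-(w_-)$, and the boundary terms vanish because $w_-(0)=w_-(T)=0$.

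Because $\delta\mu_-$ and $w_-$ are independent and arbitrary, we obtain two equations. The first is the reduced Legendre relation
\[
v_- = \frac{\delta h_-}{\delta\mu}, \qquad\text{equivalently}\qquad \dot g = g\,\frac{\delta h_-}{\delta\mu},
\]
which is the first global Hamilton equation, since $\pt_pH = g\,\delta h_-/\delta\mu$. The second is
\[
\dot\mu_- - \ad^*_{v_-}\mu_- = -\,d(\mathrm L_g)_e^*\frac{\delta h_-}{\delta g}.
\]
Substituting $v_- = \delta h_-/\delta\mu$ into the second equation gives \eqref{eq:LP-left}. The converse follows by reading the computation backwards: if both equations hold, the integrand vanishes for every admissible variation, so the curve is critical. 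The first equation must be retained as the reconstruction equation alongside \eqref{eq:LP-left}.

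The step needing most care is making sure that varying $(g,p)$ is genuinely equivalent to varying $(w_-,\delta\mu_-)$ independently. The key point is that $\delta\mu_-$ picks up a term from $\delta g$ because $\mu_-=g^{-1}p$ depends on $g$. This term is harmless: the map $(\delta g,\delta p)\mapsto(w_-,\delta\mu_-)$ is a linear triangular isomorphism, so independence of the two variations is preserved. The remaining work is tracking signs in the convention $\ad^*_v\mu(w)=\mu([v,w]_{\mathfrak g})$, which yields $\mu_-(\ad_{v_-}w_-)=\ad^*_{v_-}\mu_-(w_-)$ with a plus sign, as used above.
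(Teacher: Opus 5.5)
Your proposal is correct and follows essentially the same route as the paper's proof. Both rewrite the action as $\int_0^T[\mu_-(v_-)-h_-(g,\mu_-)]\,dt$, use $\delta v_-=\dot w_-+\ad_{v_-}w_-$, integrate $\mu_-(\dot w_-)$ by parts, and read off $v_-=\delta h_-/\delta\mu$ and the Lie--Poisson equation from the independence of $\delta\mu_-$ and $w_-$. Your extra remarks make explicit two points the paper leaves implicit: variations of $(g,p)$ correspond bijectively to independent pairs $(w_-,\delta\mu_-)$, and the converse direction needs the reconstruction relation $v_-=\delta h_-/\delta\mu$ alongside the equation.
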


By the same arguments used in the proof of Theorem \ref{thm-EP}, using the formulae of dual covariant derivative \eqref{eq:cov-curve-left-dual} and dual parallel transport \eqref{eq:parr-left-dual} instead, the equivalence between Hamilton's equations and the left Lie--Poisson equation follows directly.

\begin{theorem}[Left Lie--Poisson reduction and reconstruction]
Let \(G\) be equipped with a left-invariant connection whose associated left bilinear map is \(\nabla^{\mathrm{L}}\). The global Hamilton's equations 
\begin{equation}\label{Hamilton-Lie}
\left\{
\begin{aligned}
&g'(t) = \pt_p H,\\
&\frac{D}{dt}p(t) + p(t)\left( i_{g'(t)}T \right) = - d_g^\nabla H.
\end{aligned}
\right.
\end{equation}
are equivalent to the left Lie--Poisson equation \eqref{eq:LP-left}, via the reconstruction equations \(g'(t) = g(t) \frac{\delta h_-}{\delta \mu}\) and \(p(t) = d\left( \mathrm{L}_{g(t)^{-1}} \right)^*_{g(t)} \mu_-(t)\).
\end{theorem}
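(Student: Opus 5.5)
We mirror the proof of Theorem~\ref{thm-EP}, left-trivializing each term of \eqref{Hamilton-Lie} and checking that every occurrence of \(\nabla^{\mathrm{L}}\) cancels. The first equation is immediate: since \(\pt_p H = d(\mathrm{L}_g)_e \frac{\delta h_-}{\delta \mu}\), the condition \(g'=\pt_p H\) is exactly the reconstruction equation \(g' = g\,\frac{\delta h_-}{\delta\mu}\). Along solutions we therefore have \(v_-:=g^{-1}g' = \frac{\delta h_-}{\delta\mu}\). The remaining work is to left-trivialize the three terms of the momentum equation, with \(p = d(\mathrm{L}_{g^{-1}})^*_g\mu_-\).

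For the covariant derivative we use \eqref{eq:cov-curve-left-dual}, which gives
\[
\frac{D}{dt}p = d(\mathrm{L}_{g^{-1}})^*_g\bigl(\dot\mu_- - \nabla^{\mathrm{L}*}_{v_-}\mu_-\bigr).
\]
For the torsion term we use \eqref{eq:torsion-left} exactly as in Theorem~\ref{thm-EP}, replacing \(\frac{\delta l_-}{\delta v}\) by \(\mu_-\):
\[
p\bigl(i_{g'}T\bigr) = d(\mathrm{L}_{g^{-1}})^*_g\bigl(\nabla^{\mathrm{L}*}_{v_-} - (\nabla^{\mathrm{L}}v_-)^* - \ad^*_{v_-}\bigr)\mu_- .
\]
Adding these two, the \(\nabla^{\mathrm{L}*}_{v_-}\) terms cancel, leaving \(d(\mathrm{L}_{g^{-1}})^*_g\bigl(\dot\mu_- - (\nabla^{\mathrm{L}}v_-)^*\mu_- - \ad^*_{v_-}\mu_-\bigr)\).

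The horizontal differential is the step requiring care. By Lemma~\ref{lem:horizontal-differential}, applied with \(E=T^*G\) and the dual connection, \(d_g^\nabla H|_p(\delta g) = \frac{d}{d\e}\big|_{\e=0} H(\Gamma^*(g_\cdot)_0^\e p)\), where \(g_\e\) is any curve with \(g_0=g\) and \(\pt_\e g_\e|_0=\delta g\). Inserting \eqref{eq:parr-left-dual}, the transported covector is \(g_\e\,\mathcal T\exp\bigl(\int_0^\e \nabla^{\mathrm{L}*}_{g_r^{-1}\pt_r g_r}dr\bigr)\mu_-\). Differentiating \(h_-\) at \(\e=0\) and writing \(w=g^{-1}\delta g\), we get
\[
\frac{\delta h_-}{\delta g}(\delta g) + \bigl\langle \nabla^{\mathrm{L}*}_w \mu_-,\, \tfrac{\delta h_-}{\delta\mu}\bigr\rangle .
\]
The second term equals \(\mu_-\bigl(\nabla^{\mathrm{L}}_w v_-\bigr)\) because \(\frac{\delta h_-}{\delta\mu}=v_-\). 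Hence
\[
d_g^\nabla H = \frac{\delta h_-}{\delta g} + d(\mathrm{L}_{g^{-1}})^*_g(\nabla^{\mathrm{L}}v_-)^*\mu_- .
\]
Getting this sign right, and using the first Hamilton equation to identify \(\frac{\delta h_-}{\delta\mu}\) with \(v_-\), is where I expect the main obstacle. Both are needed for the \((\nabla^{\mathrm{L}}v_-)^*\) terms to cancel.

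To finish, substitute everything into the second equation of \eqref{Hamilton-Lie} and apply \(d(\mathrm{L}_g)^*_e\), which inverts \(d(\mathrm{L}_{g^{-1}})^*_g\) and sends \(\frac{\delta h_-}{\delta g}\) to \(d(\mathrm{L}_g)^*_e\frac{\delta h_-}{\delta g}\). Moving \(-(\nabla^{\mathrm{L}}v_-)^*\mu_-\) to the right-hand side, it cancels against the term coming from \(-d_g^\nabla H\). What remains is \(\dot\mu_- - \ad^*_{v_-}\mu_- = -d(\mathrm{L}_g)^*_e\frac{\delta h_-}{\delta g}\) with \(v_-=\frac{\delta h_-}{\delta\mu}\), which is \eqref{eq:LP-left}. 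Every step is an equivalence, since left translations are isomorphisms, so reading the argument backwards gives the converse: from a solution \(\mu_-\) of \eqref{eq:LP-left}, the reconstruction equations produce \((g,p)\) solving \eqref{Hamilton-Lie}.
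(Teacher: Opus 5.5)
Your proposal is correct and follows the argument the paper indicates: you left-trivialize $\frac{D}{dt}p$ via \eqref{eq:cov-curve-left-dual}, the torsion term via \eqref{eq:torsion-left}, and $d_g^\nabla H$ via Lemma~\ref{lem:horizontal-differential} with the dual transport \eqref{eq:parr-left-dual}, mirroring the proof of Theorem~\ref{thm-EP}. Your signs check out: the $\nabla^{\mathrm{L}*}_{v_-}$ terms cancel between the covariant derivative and the torsion, and the dual transport contributes $+(\nabla^{\mathrm{L}}v_-)^*\mu_-$ to $d_g^\nabla H$, which cancels the remaining torsion term once the first Hamilton equation gives $v_-=\frac{\delta h_-}{\delta\mu}$.
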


The reduced Legendre transform provides the link between the Lagrangian and Hamiltonian descriptions:
\[
\mu_- = \frac{\delta l_-}{\delta v},\qquad v_- = \frac{\delta h_-}{\delta \mu}, \qquad \mu_- v_- = h_-+l_-.
\]

Finally, by a completely analogous argument, there is a right-trivialized formalism, making use of right-invariant connections. We include the details in Appendix~\ref{sec-app-right-conn} for completeness.

\section{Cartan--Schouten connections}\label{sec-CS-conn}

In this section, we systematically study a distinguished family of invariant connections on Lie groups, known as Cartan--Schouten connections, for which the left bilinear map is proportional to the Lie bracket. As we shall see, this condition in fact implies bi-invariance, giving these connections a particularly left-right dual structure. The material developed here will also serve as a preparation for the connection-dependent variational principle introduced in the following section, in marked contrast to the previous section where the reduction equations were shown to be independent of the choice of connection.

\subsection{Cartan connections}

\begin{definition}[Cartan connection {\cite[Section 21.7]{GQ20}}]
A \emph{Cartan connection} on a Lie group $G$ is a left-invariant connection whose associated left bilinear map $\nabla^{\mathrm{L}}:\mathfrak g\times\mathfrak g\to\mathfrak g$ is skew-symmetric, i.e.
\(
\nabla^{\mathrm{L}}_v w + \nabla^{\mathrm{L}}_w v = 0, \forall v,w\in\mathfrak g.
\)
\end{definition}

Equivalently, the geodesics of $\nabla$ starting at the identity coincide with the one-parameter subgroups of $G$ \cite[Section 6.4]{Pos13}. To see this, we write down the geodesic equation of a left-invariant connection from \eqref{eq:cov-curve-left}:
\[
\left( \frac{d}{dt} + \nabla^{\mathrm{L}}_{g(t)^{-1} g'(t)} \right) g(t)^{-1} g'(t) = 0.
\]
If $g(\cdot)$ is a one-parameter subgroup, then $g(t)^{-1} g'(t) = g'(0)$ and the geodesic equation implies $\nabla^{\mathrm{L}}_{g'(0)} g'(0) = 0$. The arbitrariness of $g'(0)$ implies the skew-symmetry of $\nabla^{\mathrm{L}}$. Conversely, if $\nabla^{\mathrm{L}}$ is skew-symmetric, then the geodesic equation reduces to $\frac{d}{dt} (g(t)^{-1} g'(t)) = 0$, which together with the initial condition $g(0) = e$ is solved as the one-parameter subgroup $g(t) = \exp(t g'(0))$.

\subsection{Cartan--Schouten connections}

The most important family of Cartan connections is the Cartan--Schouten connections.

\begin{definition}[Cartan--Schouten connection]
Let \(\lambda\in\mathbb R\) be a fixed parameter. The \emph{Cartan--Schouten connection} (or \emph{\(\lambda\)-connection}) on a Lie group \(G\) is the unique left-invariant linear connection whose associated left bilinear map \(\nabla^{\mathrm{L}}:\mathfrak g\times\mathfrak g\to\mathfrak g\) is given by  
\begin{equation}\label{CS-conn}
  \nabla^{\mathrm{L}}_v w = \lambda \ad_v w = \lambda\,[v,w]_{\mathfrak g}.
\end{equation}
\end{definition}

Using formula \eqref{eq:left-conn} for left-invariant connections, we obtain the covariant derivative of arbitrary vector fields for Cartan--Schouten connections:
for vector fields \(V\) and \(W\) on \(G\),
\begin{equation*}
  (\nabla_V W)_g = g \left[ V_g(g^{-1} W_g) + \lambda \ad_{g^{-1} V_g} (g^{-1} W_g) \right].
\end{equation*}

\begin{lemma}[Covariant derivative along a curve]
Let \(g(\cdot)\) be a smooth curve on $G$. For a vector field \(W\) along \(g(\cdot)\),
\begin{equation}\label{cov-curve-Cartan-lambda}
  \begin{split}
    \frac{D}{dt} W_{g(t)} &= d\left( \mathrm{L}_{g(t)} \right)_e \left( \frac{d}{dt} + \lambda \ad_{d( \mathrm{L}_{g(t)^{-1}} )_{g(t)} g'(t)} \right) \left( d\left( \mathrm{L}_{g(t)^{-1}} \right)_{g(t)} W_{g(t)} \right) \\
    &= d\left( \mathrm{R}_{g(t)} \right)_e \left( \frac{d}{dt} + (\lambda-1) \ad_{d( \mathrm{R}_{g(t)^{-1}} )_{g(t)} g'(t)} \right) \left( d\left( \mathrm{R}_{g(t)^{-1}} \right)_{g(t)} W_{g(t)} \right).
  \end{split}
\end{equation}
For a 1-form \(\eta\) along \(g(\cdot)\),
\begin{equation}\label{cov-curve-Cartan-lambda-dual}
\begin{split}
  \frac{D}{dt} \eta_{g(t)} &= d\left( \mathrm{L}_{g(t)^{-1}} \right)^*_{g(t)} \left( \frac{d}{dt} - \lambda \ad_{d( \mathrm{L}_{g(t)^{-1}} )_{g(t)} g'(t)}^* \right) d\left( \mathrm{L}_{g(t)} \right)^*_e \eta_{g(t)} \\
  &= d\left( \mathrm{R}_{g(t)^{-1}} \right)^*_{g(t)} \left( \frac{d}{dt} - (\lambda-1) \ad_{d( \mathrm{R}_{g(t)^{-1}} )_{g(t)} g'(t)}^* \right) d\left( \mathrm{R}_{g(t)} \right)^*_e \eta_{g(t)}.
\end{split}
\end{equation}
\end{lemma}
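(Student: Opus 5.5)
The plan is to treat the left-trivialized identities as a direct specialization of Lemma \ref{cov-der-curve}, and to derive the right-trivialized identities by converting between left and right trivializations with the adjoint representation. The second step is where the actual work lies.

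\emph{Left-trivialized forms.} Substituting $\nabla^{\mathrm{L}}_v w=\lambda\ad_v w$ from \eqref{CS-conn} into \eqref{eq:cov-curve-left} gives the first line of \eqref{cov-curve-Cartan-lambda} at once. The dual operator is $\nabla^{\mathrm{L}*}_v=\lambda\ad_v^*$, so \eqref{eq:cov-curve-left-dual} gives the first line of \eqref{cov-curve-Cartan-lambda-dual}.

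\emph{Right-trivialized vector form.} Write $v_-=g^{-1}g'$ and $v_+=g'g^{-1}$, so that $v_-=\Ad_{g^{-1}}v_+$. Set $X=g^{-1}W$ and $Y=Wg^{-1}$, so that $X=\Ad_{g^{-1}}Y$. I would compute $\frac{d}{dt}X$ with the product rule and \eqref{der-Ad-curve-inv}:
\[
\frac{d}{dt}X=-\Ad_{g^{-1}}\ad_{v_+}Y+\Ad_{g^{-1}}\dot Y .
\]
By \eqref{Ad-bracket},
\[
\lambda\ad_{v_-}X=\lambda\Ad_{g^{-1}}\ad_{v_+}Y .
\]
Adding these two expressions gives
\[
\Bigl(\frac{d}{dt}+\lambda\ad_{v_-}\Bigr)X=\Ad_{g^{-1}}\Bigl(\frac{d}{dt}+(\lambda-1)\ad_{v_+}\Bigr)Y .
\]
Applying $d(\mathrm L_g)_e$ and using $d(\mathrm L_g)_e\Ad_{g^{-1}}=d(\mathrm R_g)_e$ then yields the second line of \eqref{cov-curve-Cartan-lambda}.

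\emph{Right-trivialized covector form.} The cleanest route is duality, since both lines are equalities of operators. Define $\nu_-=d(\mathrm L_g)^*_e\eta$ and $\nu_+=d(\mathrm R_g)^*_e\eta$, so that $\nu_-=\Ad_g^*\nu_+$. I would then repeat the computation with $\frac{d}{dt}\Ad^*_g$. This derivative follows by dualizing \eqref{der-Ad-curve-inv}, or by differentiating the pairing $\langle\nu_-,X\rangle=\langle\nu_+,Y\rangle$.

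The main obstacle is keeping the signs and the placement of $\Ad$ versus $\Ad^{-1}$ straight in the dual computation. The risk of error is lowest if I instead verify that $\frac{d}{dt}\langle\eta,W\rangle$ splits as $\langle \frac{D}{dt}\eta,W\rangle+\langle\eta,\frac{D}{dt}W\rangle$. This holds in the left form with the $+\lambda\ad$ and $-\lambda\ad^*$ terms cancelling, and the right form follows in the same way from the already established vector identity.
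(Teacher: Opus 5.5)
Your proposal is correct and follows the paper's proof essentially step for step: you get the left forms by substituting $\nabla^{\mathrm{L}}_v w=\lambda\ad_v w$ into Lemma~\ref{cov-der-curve}, the right vector form by writing $g^{-1}W=\Ad_{g^{-1}}(Wg^{-1})$ and applying \eqref{der-Ad-curve-inv} and \eqref{Ad-bracket}, and the covector forms by duality. Your Leibniz-rule argument for the right covector identity (pairing with an arbitrary $W$ and using the established right vector formula) is a clean way to carry out the duality step that the paper only sketches.
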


\begin{proof}
The first equality of \eqref{cov-curve-Cartan-lambda} follows from \eqref{eq:cov-curve-left} and \eqref{CS-conn}. Next, since
\begin{equation*}
  d\left( \mathrm{L}_{g(t)^{-1}} \right)_{g(t)} W_{g(t)} = \Ad_{g(t)^{-1}} d\left( \mathrm{R}_{g(t)^{-1}} \right)_{g(t)} W_{g(t)},
\end{equation*}
we apply \eqref{der-Ad-curve-inv} and \eqref{Ad-bracket} to obtain
\begin{equation*}
  \begin{aligned}
    \frac{D}{dt} W_{g(t)} &= d\left( \mathrm{L}_{g(t)} \right)_e \bigg[ \frac{d}{dt} \left( \Ad_{g(t)^{-1}} d\left( \mathrm{R}_{g(t)^{-1}} \right)_{g(t)} W_{g(t)} \right) \\
    &\qquad\qquad\qquad + \lambda \ad_{g(t)^{-1} g'(t)} \left( \Ad_{g(t)^{-1}} d\left( \mathrm{R}_{g(t)^{-1}} \right)_{g(t)} W_{g(t)} \right) \bigg] \\
    &= d\left( \mathrm{L}_{g(t)} \right)_e \bigg[ \Ad_{g(t)^{-1}} \frac{d}{dt} \left( d\left( \mathrm{R}_{g(t)^{-1}} \right)_{g(t)} W_{g(t)} \right) - \Ad_{g(t)^{-1}} \ad_{g'(t) g(t)^{-1}} \left( d\left( \mathrm{R}_{g(t)^{-1}} \right)_{g(t)} W_{g(t)} \right) \\
    &\qquad\qquad\qquad + \lambda \Ad_{g(t)^{-1}} \ad_{g'(t) g(t)^{-1}} \left( d\left( \mathrm{R}_{g(t)^{-1}} \right)_{g(t)} W_{g(t)} \right) \bigg] \\
    &= d\left( \mathrm{R}_{g(t)} \right)_e \left[ \frac{d}{dt} \left( d\left( \mathrm{R}_{g(t)^{-1}} \right)_{g(t)} W_{g(t)} \right) + (\lambda-1) \ad_{g'(t) g(t)^{-1}} \left( d\left( \mathrm{R}_{g(t)^{-1}} \right)_{g(t)} W_{g(t)} \right) \right],
  \end{aligned}
\end{equation*}
which yields the second equality of \eqref{cov-curve-Cartan-lambda}.
Equation \eqref{cov-curve-Cartan-lambda-dual} follows from the argument of duality, similar to the proof of \eqref{eq:cov-curve-left-dual}.
\end{proof}


As a consequence of the second equality of \eqref{cov-curve-Cartan-lambda}, the covariant derivative in terms of the right action is: for vector fields \(V\) and \(W\)
\begin{equation}\label{eq:def-Cartan-lambda-right}
  (\nabla_V W)_g = \left[ V_g(W_g g^{-1}) + (\lambda-1) \ad_{V_g g^{-1}} (W_g g^{-1}) \right] g.
\end{equation}
We thus reach the following key observation for Cartan--Schouten connections.

\begin{proposition}[Bi-invariance of Cartan--Schouten connections]
The Cartan--Schouten connection of parameter \(\lambda\in\mathbb R\) is also right-invariant, and its associated right bilinear map is
\begin{equation}\label{CS-conn-right}
\nabla^{\mathrm{R}}_v w = (\lambda-1)\ad_v w = (\lambda-1)[v,w],\qquad v,w\in\mathfrak g.
\end{equation}
In particular, all Cartan--Schouten connections are bi-invariant.
\end{proposition}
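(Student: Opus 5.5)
The proof rests on the right-trivialized form of the covariant derivative just obtained, \eqref{eq:def-Cartan-lambda-right}. The plan is to show that $\nabla$ maps pairs of right-invariant vector fields to right-invariant vector fields. This is the defining property of right-invariance, in exact analogy with the left-invariant definition in Section~\ref{sec-inv-conn}. We then read off the right bilinear map.

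Concretely, take $v,w\in\mathfrak g$ and the right-invariant fields $V=v^{\mathrm R}$, $W=w^{\mathrm R}$, so that $V_g g^{-1}=v$ and $W_g g^{-1}=w$ are constant in $g$. The first term in \eqref{eq:def-Cartan-lambda-right}, $V_g(W_g g^{-1})$, is the derivative of a constant $\mathfrak g$-valued function and therefore vanishes. This leaves
\[
(\nabla_{v^{\mathrm R}}w^{\mathrm R})_g=\bigl((\lambda-1)\ad_v w\bigr)g=\bigl((\lambda-1)[v,w]_{\mathfrak g}\bigr)^{\mathrm R}\big|_g ,
\]
which is right-invariant. Hence $\nabla$ is right-invariant, and its right bilinear map, defined by $\nabla^{\mathrm R}_v w:=(\nabla_{v^{\mathrm R}}w^{\mathrm R})_e$, equals $(\lambda-1)[v,w]_{\mathfrak g}$. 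This is \eqref{CS-conn-right}.

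If one prefers the equivalent formulation $(\mathrm R_h)^*\nabla=\nabla$, the argument is as follows. A connection is determined by its values on a global frame, together with the Leibniz rule and $C^\infty$-linearity in the first slot. Right-invariant fields form such a frame, and $\nabla$ maps them to right-invariant fields. It follows that $d\mathrm R_h(\nabla_V W)=\nabla_{d\mathrm R_h V}(d\mathrm R_h W)$ for all $V,W$. Left-invariance holds by definition of the $\lambda$-connection, so bi-invariance follows.

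The only real obstacle is justifying \eqref{eq:def-Cartan-lambda-right} for arbitrary vector fields $V,W$, rather than only along curves. To do this, I would take a curve $g(\cdot)$ with $g(0)=g$ and $g'(0)=V_g$, and apply the second equality of \eqref{cov-curve-Cartan-lambda} to the field $W$ restricted to that curve. The equality $\frac{D}{dt}W_{g(t)}|_{t=0}=(\nabla_{V}W)_g$ is standard for vector fields extendible along curves. Beyond that, the identities \eqref{der-Ad-curve-inv} and \eqref{Ad-bracket} used in the preceding lemma are the only inputs.

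A final consistency point concerns the sign convention \eqref{Lie-bracket-right}: the map $v\mapsto v^{\mathrm R}$ is an anti-isomorphism. As a check, the torsion of the left-invariant formula gives $(2\lambda-1)[v,w]$ on left-invariant fields. On right-invariant fields we get $\nabla^{\mathrm R}_v w-\nabla^{\mathrm R}_w v-[v^{\mathrm R},w^{\mathrm R}]_e=2(\lambda-1)[v,w]+[v,w]=(2\lambda-1)[v,w]$. This agrees with the left-invariant computation, which confirms the constant $(\lambda-1)$.
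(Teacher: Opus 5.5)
Your proof is correct and matches the paper's: substitute $V=v^{\mathrm R}$, $W=w^{\mathrm R}$ into the right-trivialized formula \eqref{eq:def-Cartan-lambda-right}, note that the derivative term vanishes because $W_g g^{-1}=w$ is constant, and read off $(\nabla_{v^{\mathrm R}}w^{\mathrm R})_g=\bigl((\lambda-1)[v,w]_{\mathfrak g}\bigr)g$. Your additional remarks are all sound and make explicit what the paper leaves implicit: obtaining \eqref{eq:def-Cartan-lambda-right} pointwise from the curve formula, the frame argument giving $(\mathrm R_h)^*\nabla=\nabla$, and the torsion consistency check.
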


\begin{proof}
Let \(V = v^{\mathrm R}, W = w^{\mathrm R}\) in \eqref{eq:def-Cartan-lambda-right}, with \(v,w\in\mathfrak g\). We have
\begin{equation*}
  (\nabla_{v^{\mathrm R}} w^{\mathrm R})_g = (\lambda-1) (\ad_v w) g.
\end{equation*}
Thus, the covariant derivative is right-invariant and induced right bilinear map is \eqref{CS-conn-right}.
\end{proof}

\begin{remark}\label{rem-left-right}
Comparing \eqref{CS-conn} and \eqref{CS-conn-right}, we see that the left and right formulations of a Cartan--Schouten connection differ merely by replacing the parameter \(\lambda\) with \(\lambda-1\). Recall from \eqref{Lie-bracket-left} and \eqref{Lie-bracket-right} the fact that the left-invariant fields yield the Lie bracket, while the right-invariant ones give the opposite bracket. Consequently, every formula derived for a Cartan--Schouten connection admits two versions, one in the left formalism and one in the right formalism, related by the substitutions
\[
v^{\mathrm L}\leftrightarrow v^{\mathrm R},\qquad 
\lambda \longleftrightarrow \lambda-1,\qquad 
[v,w]_{\mathfrak g}\longleftrightarrow -[v,w]_{\mathfrak g}.
\]
This duality is a hallmark of the Cartan--Schouten family.
\end{remark}

From Lemma \ref{cor-parr} and the previous remark, we immediately obtain the parallel transport for Cartan--Schouten connections.

\begin{corollary}[Parallel transport of the \(\lambda\)-connection]
Along a curve \(g(\cdot)\), the parallel transport operator \(\Gamma(g)_s^t:T_{g(s)}G\to T_{g(t)}G\) is given by
\begin{equation}\label{parr-tran-Cartan-lambda}
\begin{split}
  \Gamma(g)_s^t &= d\left( \mathrm{L}_{g(t)} \right)_e \mathcal T \exp\left( - \lambda \int_s^t \ad_{d( \mathrm{L}_{g(r)^{-1}} )_{g(r)} g'(r)} dr \right) d\left( \mathrm{L}_{g(s)^{-1}} \right)_{g(s)} \\
  &= d\left( \mathrm{R}_{g(t)} \right)_e \mathcal T \exp\left( (1-\lambda) \int_s^t \ad_{d( \mathrm{R}_{g(r)^{-1}} )_{g(r)} g'(r)} dr \right) d\left( \mathrm{R}_{g(s)^{-1}} \right)_{g(s)}.
\end{split}
\end{equation}
Its dual version \(\Gamma^*(g)_s^t:T^*_{g(s)}G\to T^*_{g(t)}G\) is given by
\begin{equation}\label{parr-tran-Cartan-lambda-dual}
\begin{split}
  \Gamma^*(g)_s^t &= d\left( \mathrm{L}_{g(t)^{-1}} \right)_{g(t)}^* \mathcal T \exp\left( \lambda \int_s^t \ad_{d( \mathrm{L}_{g(r)^{-1}} )_{g(r)} g'(r)}^* dr \right) d\left( \mathrm{L}_{g(s)} \right)_e^* \\
  &= d\left( \mathrm{R}_{g(t)^{-1}} \right)_{g(t)}^* \mathcal T \exp\left( (\lambda-1) \int_s^t \ad_{d( \mathrm{R}_{g(r)^{-1}} )_{g(r)} g'(r)}^* dr \right) d\left( \mathrm{R}_{g(s)} \right)_e^*.
\end{split}
\end{equation}
\end{corollary}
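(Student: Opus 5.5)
The plan is to specialize Corollary~\ref{cor-parr} to the two bilinear maps of the Cartan--Schouten connection, namely \eqref{CS-conn} on the left and \eqref{CS-conn-right} on the right. Nothing beyond substitution and dualization should be needed.

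For the first line of \eqref{parr-tran-Cartan-lambda}, I substitute $\nabla^{\mathrm L}_v = \lambda\ad_v$ into \eqref{eq:parr-left}. The generator of the ordered exponential then becomes $-\lambda\ad_{d(\mathrm L_{g(r)^{-1}})g'(r)}$, which is exactly the first formula.

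For the second line I see two options, and I prefer the second. The first option is to invoke the right-invariant analogue of Corollary~\ref{cor-parr} from Appendix~\ref{sec-app-right-conn} with $\nabla^{\mathrm R}_v=(\lambda-1)\ad_v$. Remark~\ref{rem-left-right} says this analogue is the left formula with the substitutions listed there. The sign bookkeeping is subtle, however, because two sign flips interact: $\lambda\to\lambda-1$, and the bracket $\to$ its opposite. The second option avoids the appendix. I take the second equality of \eqref{cov-curve-Cartan-lambda}, which says that parallelism $\frac{D}{dt}W=0$ is equivalent to
\[
\frac{d}{dt}u(t) = (1-\lambda)\ad_{g'(t)g(t)^{-1}}u(t), \qquad u(t):=W_{g(t)}g(t)^{-1}.
\]
Solving this linear ODE with the ordered exponential gives the second line, mirroring the proof of Corollary~\ref{cor-parr}. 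The sign $(1-\lambda)$ agrees with the stated formula, so this route settles the signs cleanly.

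For \eqref{parr-tran-Cartan-lambda-dual}, I take the same ODE route from \eqref{cov-curve-Cartan-lambda-dual}. Parallelism of $\eta$ means
\[
\frac{d}{dt}\bigl(d(\mathrm L_{g})^*_e\eta\bigr)=\lambda\,\ad^*_{g^{-1}g'}\,d(\mathrm L_{g})^*_e\eta ,
\]
and similarly with $(\lambda-1)\ad^*_{g'g^{-1}}$ in the right trivialization. Integrating these gives the two dual formulas.

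As a consistency check, the dual transport should be the inverse transpose of the primal one. This is what transport preserving the pairing $\eta(W)$ requires. The generator $\lambda\ad^*$ is indeed minus the transpose of $-\lambda\ad$, and $\mathcal T\exp$ of the negative transpose generator is the inverse transpose of the original. The one step that needs care is the ordering inside $\mathcal T\exp$ when one takes transposes: the transpose reverses time order. Because of this I would rely on the ODE characterization rather than on a series manipulation.
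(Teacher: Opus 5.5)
Your proof is correct. For the left-trivialized lines it is the paper's argument: substitute $\nabla^{\mathrm L}_v=\lambda\ad_v$ into Corollary~\ref{cor-parr}.

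For the right-trivialized lines your route differs. The paper says only that they follow from Corollary~\ref{cor-parr} and Remark~\ref{rem-left-right}, meaning the right-invariant analogue in Appendix~\ref{sec-app-right-conn} with $\nabla^{\mathrm R}_v=(\lambda-1)\ad_v$ from \eqref{CS-conn-right}. You instead integrate the linear ODEs given by the second equalities of \eqref{cov-curve-Cartan-lambda} and \eqref{cov-curve-Cartan-lambda-dual}. The mechanism is the same: parallelism is a linear ODE in a trivialization, solved by $\mathcal T\exp$. Your version is more self-contained, because those second equalities are proved in the main text while the appendix formulas are stated without derivation.

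Your caution about signs is warranted. If you apply the substitution rules of Remark~\ref{rem-left-right} literally, including $[\cdot,\cdot]_{\mathfrak g}\to-[\cdot,\cdot]_{\mathfrak g}$ inside $\ad$, then $-\lambda\ad$ becomes $(\lambda-1)\ad$, which has the wrong sign. The bracket flip belongs only to brackets of invariant vector fields, as in the curvature and torsion formulas. Plugging \eqref{CS-conn-right} into the appendix's right-invariant transport formula needs only $\lambda\to\lambda-1$, and gives $-(\lambda-1)=1-\lambda$ directly.

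Your inverse-transpose consistency check for the dual formulas, done through the ODE characterization rather than series manipulation, is also sound.
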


\begin{remark}\label{rem-simple}
When \(G\) is nilpotent, the parallel transport operator \eqref{parr-tran-Cartan-lambda} can be simplified. 
Let its Lie algebra \(\mathfrak g\) be nilpotent of step \(s\), i.e., the lower central series satisfies \(\mathfrak g^{s+1}=0\). Consequently, the series of the ordered exponential in \eqref{parr-tran-Cartan-lambda} and \eqref{parr-tran-Cartan-lambda-dual} truncate at the \(s\)-th term.
In the special case of a connected two-step nilpotent Lie group (such as the Heisenberg group), we have \(\ad_v\ad_w=0\) for all \(v,w\in\mathfrak g\), and thus, the parallel transport operator \eqref{parr-tran-Cartan-lambda} takes the explicit form
\begin{equation}\label{para-trans-heis}
\Gamma(g)_s^t
=
d\left( \mathrm{L}_{g(t)g(s)^{-1}} \right)_{g(s)}
- \lambda d\left( \mathrm{L}_{g(t)} \right)_e \int_s^t \ad_{d( \mathrm{L}_{g(r)^{-1}} )_{g(r)} g'(r)} dr\, d\left( \mathrm{L}_{g(s)^{-1}} \right)_{g(s)}.
\end{equation}
\end{remark}

The curvature and torsion of the \(\lambda\)-connection are easily computed from those of left-invariant connections Lemma \ref{cur-tor-left} and the formula of the left bilinear map \eqref{CS-conn}. By Remark \ref{rem-left-right}, there are right versions for them, using the corresponding right formulations.

For left-/right- invariant vector fields, the curvature and torsion are also left-/right- invariant. Thus, there is no ambiguity in restricting the curvature and torsion tensors for Cartan--Schouten connections to the Lie algebra:
\begin{equation}\label{tor-cur-lie-alg}
R(u,v)w = \lambda(\lambda-1)\,[[u,v]_{\mathfrak g},w]_{\mathfrak g},\qquad
T(v,w) = (2\lambda-1)[v,w]_{\mathfrak g},
\qquad u,v,w\in\mathfrak g.
\end{equation}

It can also be proved that, the curvature and torsion tensors for Cartan--Schouten connections are both covariantly constant, i.e., \(\nabla R=0, \nabla T=0\). Cf. \cite[Section 6.4]{Pos13}.

\begin{remark}
The Cartan--Schouten connections corresponding to the special parameter values \(\lambda=0,\frac12,1\) are of particular interest. \\
(i) The $(-)$-connection ($\lambda=0$) and $(+)$-connection ($\lambda=1$) are both flat. Indeed, for \(\lambda=0\), the parallel transport is simply left translation \(\Gamma(g)_s^t=d(\mathrm L_{g(t)g(s)^{-1}})_{g(s)}\), while for \(\lambda=1\), it is right translation \(\Gamma(g)_s^t=d(\mathrm R_{g(s)^{-1}g(t)})_{g(s)}\). In either case, the transport depends only on the endpoints, so the curvature vanishes identically, while the torsion is \(T=-[\cdot,\cdot]_{\mathfrak g}\) for \((+)\) and \(T=[\cdot,\cdot]_{\mathfrak g}\) for \((-)\). \\
(ii) The symmetric connection \(\lambda=\frac12\) is the unique torsion-free Cartan--Schouten connection. Its curvature is
\(
R(u,v)w=-\frac14\,[[u,v]_{\mathfrak g},w]_{\mathfrak g}.
\)
It is locally symmetric since \(\nabla R=0\). On a compact semisimple Lie group, when equipped with the bi-invariant metric induced by the Killing form, this connection coincides with the Levi-Civita connection and the group becomes a Riemannian symmetric space; in the non-compact case, the same holds after a sign change of the metric.
\end{remark}

\section{Connection-dependent variational principles}\label{sec-conn-vp}

In Section \ref{sec-geom-mech}, we considered Lagrangians depending on the instantaneous velocity \(\dot\gamma\). We now develop a new variational principle in which the Lagrangian depends on the velocity parallel-transported back to the initial point of the curve. This formulation reveals two distinct sources of nonlocality in the resulting Euler--Lagrange equations: a path-dependent term encoded in the parallel transport, and a future-dependent term arising from the integral of curvature.

\subsection{The transported velocity and its variation}

Let \(Q\) be a smooth manifold with a linear connection \(\nabla\) with torsion \(T\) and curvature \(R\). 

For a curve \(\gamma \in C^2([0,T], Q)\),
consider a one-parameter family \(\{\gamma_\epsilon\}\) of curves near it, with fixed endpoints \(\gamma_\epsilon(0)=\gamma(0)\), \(\gamma_\epsilon(T)=\gamma(T)\), and satisfying $\gamma_0 = \gamma$. The infinitesimal variation of $\gamma$ is as in \eqref{eq:inf-var}. We identify \(T_{\gamma(0)}Q\) with \(\mathbb R^m\) via a fixed linear isomorphism.

We define the \emph{transported velocity} of $\gamma$
\begin{equation}\label{trans-vel}
  v(t):=\Gamma(\gamma)_t^0\dot\gamma(t)\in T_{\gamma(0)}Q\simeq\mathbb R^m,
\end{equation}
where \(\Gamma(\gamma)_s^t:T_{\gamma(s)}Q\to T_{\gamma(t)}Q\) denotes parallel transport along \(\gamma\). Similarly,
\begin{equation}\label{trans-vel-family}
  v_\epsilon(t) := \Gamma(\gamma_\epsilon)_t^0\dot\gamma_\epsilon(t)\in T_{\gamma_\epsilon(0)}Q=T_{\gamma(0)}Q.
\end{equation}
The infinitesimal variation of $v$ is defined by
\begin{equation}\label{trans-vel-var}
  \delta v(t):=\frac{\pt}{\pt\epsilon}\bigg|_{\epsilon=0}v_\epsilon(t).
\end{equation}

We need an expression for \(\delta v\) in terms of \(\delta\gamma\). The following lemma is the key relation that replaces the simple formula \(\delta \dot \gamma=\frac{d}{dt} \delta\gamma\) in the flat case; its proof is given in Appendix~\ref{sec-app-selected-proofs}.

\begin{lemma}[Variation of transported velocity]\label{lem:variation-transported-velocity}
We have the following formula:
\begin{equation}\label{eq:delta-v-main}
\delta v(t)=\frac{d}{dt}\bigl(\Gamma(\gamma)_t^0\delta\gamma(t)\bigr)
-\Gamma(\gamma)_t^0 T\bigl(\gamma'(t),\delta\gamma(t)\bigr)
-\int_0^t \Gamma(\gamma)_r^0 \left( R\bigl(\gamma'(r),\delta\gamma(r)\bigr)\,\Gamma(\gamma)_t^r \gamma'(t)\right)dr.
\end{equation}
\end{lemma}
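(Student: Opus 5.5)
The plan is to work with the two-parameter map \(\Phi(\epsilon,t)=\gamma_\epsilon(t)\) and to use parallel-transported frames along the \(t\)-curves. Fix a basis \(e_1,\dots,e_m\) of \(T_{\gamma(0)}Q\). Since \(\gamma_\epsilon(0)=\gamma(0)\) for all \(\epsilon\), the frames
\[
E_i(\epsilon,t):=\Gamma(\gamma_\epsilon)_0^t e_i
\]
are well defined. They satisfy \(\frac{D}{dt}E_i=0\) and \(E_i(\epsilon,0)=e_i\). In this frame write \(\partial_t\Phi=v_\epsilon^i(t)E_i\). By definition \eqref{trans-vel-family}, the components \(v_\epsilon^i\) are exactly those of \(v_\epsilon(t)\), so \(\delta v=\partial_\epsilon v_\epsilon^i|_{\epsilon=0}\,e_i\). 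Similarly write \(\partial_\epsilon\Phi=w^i E_i\); then \(\Gamma(\gamma)_t^0\delta\gamma(t)=w^i(0,t)e_i\).

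First, take the covariant \(\epsilon\)-derivative of \(\partial_t\Phi\):
\[
\frac{D}{d\epsilon}\partial_t\Phi=(\partial_\epsilon v^i)E_i+v^i\frac{D}{d\epsilon}E_i .
\]
Then use the torsion identity \(\frac{D}{d\epsilon}\partial_t\Phi=\frac{D}{dt}\partial_\epsilon\Phi+T(\partial_\epsilon\Phi,\partial_t\Phi)\), with the sign convention implicit in \eqref{eq:variation-der}, which gives the \(-T(\gamma',\delta\gamma)\) sign. Because the \(E_i\) are parallel in \(t\), we have \(\frac{D}{dt}\partial_\epsilon\Phi=(\partial_t w^i)E_i\). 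Applying \(\Gamma(\gamma)_t^0\) at \(\epsilon=0\) yields
\[
\delta v=\frac{d}{dt}\bigl(\Gamma(\gamma)_t^0\delta\gamma\bigr)-\Gamma(\gamma)_t^0T(\gamma',\delta\gamma)-\Gamma(\gamma)_t^0\Bigl(v^i\frac{D}{d\epsilon}E_i\Bigr).
\]
So it remains to identify the defect term \(\frac{D}{d\epsilon}E_i\).

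This is the main step. Set \(F_i:=\frac{D}{d\epsilon}E_i\). Using the curvature commutation
\[
\frac{D}{dt}\frac{D}{d\epsilon}Z-\frac{D}{d\epsilon}\frac{D}{dt}Z=R(\partial_t\Phi,\partial_\epsilon\Phi)Z
\]
together with \(\frac{D}{dt}E_i=0\), we get the linear ODE \(\frac{D}{dt}F_i=R(\partial_t\Phi,\partial_\epsilon\Phi)E_i\). Its initial condition is \(F_i(\epsilon,0)=0\), because \(E_i(\epsilon,0)=e_i\) is constant in the fixed fibre \(T_{\gamma(0)}Q\) and \(\partial_\epsilon\Phi(\epsilon,0)=0\). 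Solving by variation of constants along \(\gamma\) at \(\epsilon=0\) gives
\[
\Gamma(\gamma)_t^0F_i(t)=\int_0^t\Gamma(\gamma)_r^0R(\gamma'(r),\delta\gamma(r))E_i(r)\,dr .
\]
Contracting with \(v^i(t)\) and using \(v^i(t)E_i(r)=\Gamma(\gamma)_t^r\gamma'(t)\) then produces exactly the integral term in \eqref{eq:delta-v-main}.

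The expected obstacle is sign bookkeeping. I would fix the conventions \(R(X,Y)=[\nabla_X,\nabla_Y]-\nabla_{[X,Y]}\) and \(T(X,Y)=\nabla_XY-\nabla_YX-[X,Y]\), and apply them to \(\Phi\) with the order of arguments chosen so that the torsion and curvature terms carry the signs shown in \eqref{eq:delta-v-main}. As a consistency check, in the flat case the curvature term must vanish and the remaining terms must agree with \eqref{eq:variation-der}.
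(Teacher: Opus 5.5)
Your proposal is correct and is essentially the paper's argument written in a parallel frame. The paper applies the curvature commutator $\frac{D}{dt}\frac{D}{d\epsilon}-\frac{D}{d\epsilon}\frac{D}{dt}$ to $\Gamma(\gamma_\epsilon)_0^t u_\epsilon$ for an $\epsilon$-dependent $u_\epsilon\in T_{\gamma(0)}Q$, integrates along $\gamma$, substitutes $u_\epsilon=v_\epsilon(t)$, and invokes \eqref{eq:variation-der}; by linearity this is exactly your split $(\partial_\epsilon v^i)E_i+v^iF_i$ with $F_i(0)=0$, and with the standard conventions you fixed, your identities already give the signs in \eqref{eq:delta-v-main}, so no adjustment of argument order is needed.
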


\subsection{Integro-differential Euler--Lagrange equation}

Let \(l:\mathbb \mathrm{R}_+\times Q\times \mathbb R^m\to\mathbb R\) be a smooth function. We consider the action functional
\begin{equation}\label{eq:action-conn-dep}
\mathcal S[\gamma]=\int_0^T l\bigl(t,\gamma(t),v(t)\bigr)\,dt
\end{equation}
defined for curves \(\gamma\in C^2([0,T],Q)\) satisfying fixed endpoint conditions \(\gamma(0)=q_0,\;\gamma(T)=q_T\), where $v(\cdot)$ is the transported velocity of the curve $\gamma$ defined in \eqref{trans-vel}.
Note that the Lagrangian $l$ is connection- and path-dependent.

\begin{proposition}[Integro-differential Euler--Lagrange equation]
A curve \(\gamma\in C^2([0,T],Q)\) is a critical point of the action \eqref{eq:action-conn-dep} under fixed-endpoint variations, where $v(\cdot)$ is the transported velocity \eqref{trans-vel}, if and only if \((\gamma(\cdot),v(\cdot))\) satisfies the integro-differential Euler--Lagrange equation
\begin{equation}\label{eq:int-diff-EL}
\begin{aligned}
\frac{\delta l}{\delta q}
= &\ \Gamma(\gamma)_0^t\left(\frac{d}{dt}\frac{\delta l}{\delta v}\right)
+ \frac{\delta l}{\delta v}\left(\Gamma(\gamma)_t^0 \left( T(\gamma'(t),\cdot)\right) \right) \\
&\ + \int_t^T \frac{\delta l}{\delta v}\bigl(r,\gamma(r),v(r)\bigr) \left(\Gamma(\gamma)_t^0 \left( R(\gamma'(t),\cdot)\,\Gamma(\gamma)_r^t\gamma'(r) \right) \right)dr.
\end{aligned}
\end{equation}
\end{proposition}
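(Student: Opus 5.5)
The plan is to compute the first variation of the action \eqref{eq:action-conn-dep} directly, substitute Lemma~\ref{lem:variation-transported-velocity} for \(\delta v\), and rearrange every term into the form \(\int_0^T (\text{covector at }\gamma(t))(\delta\gamma(t))\,dt\). Since \(\delta\gamma\) is an arbitrary vector field along \(\gamma\) vanishing at the endpoints, the fundamental lemma of the calculus of variations will then give \eqref{eq:int-diff-EL}.

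\textbf{First variation.} Because \(v_\epsilon(t)\) lives in the fixed space \(T_{\gamma(0)}Q\simeq\mathbb R^m\), the variation splits with no connection correction in the \(v\)-slot:
\[
\delta\mathcal S=\int_0^T\Bigl[\tfrac{\delta l}{\delta q}(\delta\gamma(t))+\tfrac{\delta l}{\delta v}(\delta v(t))\Bigr]dt .
\]
I will take \(\frac{\delta l}{\delta q}\) to be the ordinary partial differential in \(q\). This is legitimate here because \(l\) depends on \(q\) and on a vector in a fixed vector space, so no horizontal differential is required.

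\textbf{Substituting \eqref{eq:delta-v-main}.} The three terms of \(\delta v\) are handled as follows.
\begin{enumerate}
\item[(a)] Integrate by parts in \(t\). The boundary terms vanish because \(\delta\gamma(0)=\delta\gamma(T)=0\), leaving \(-\int_0^T \bigl(\tfrac{d}{dt}\tfrac{\delta l}{\delta v}\bigr)\bigl(\Gamma(\gamma)_t^0\delta\gamma(t)\bigr)dt\). Transposing the transport gives the covector \(-\Gamma(\gamma)_0^t\bigl(\tfrac{d}{dt}\tfrac{\delta l}{\delta v}\bigr)\) at \(\gamma(t)\). Here I read \(\Gamma(\gamma)_0^t\) acting on covectors as the dual \((\Gamma(\gamma)_t^0)^*\), which coincides with dual parallel transport.
\item[(b)] The torsion term is already pointwise in \(\delta\gamma(t)\). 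It contributes \(-\tfrac{\delta l}{\delta v}\bigl(\Gamma(\gamma)_t^0 T(\gamma'(t),\cdot)\bigr)\).
\item[(c)] The curvature term is a double integral over \(0\le r\le t\le T\) in which \(\delta\gamma\) appears at time \(r\). Swapping the order to \(0\le r\le T,\ r\le t\le T\) and then renaming \(r\leftrightarrow t\) gives
\[
-\int_0^T\!\!\int_t^T \tfrac{\delta l}{\delta v}\bigl(r,\gamma(r),v(r)\bigr)\Bigl(\Gamma(\gamma)_t^0 R(\gamma'(t),\delta\gamma(t))\Gamma(\gamma)_r^t\gamma'(r)\Bigr)\,dr\,dt .
\]
Linearity in \(\delta\gamma(t)\) identifies the covector appearing in the future integral of \eqref{eq:int-diff-EL}.
\end{enumerate}

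\textbf{Collecting terms.} Gathering all contributions yields
\[
\delta\mathcal S=\int_0^T\bigl[\tfrac{\delta l}{\delta q}-\text{(r.h.s.\ of \eqref{eq:int-diff-EL})}\bigr](\delta\gamma(t))\,dt .
\]
The "if" direction is immediate from this expression. The "only if" direction follows because any smooth \(\delta\gamma\) vanishing at the endpoints is realized by some variation, for example \(\gamma_\epsilon(t)=\exp_{\gamma(t)}(\epsilon\,\delta\gamma(t))\).

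\textbf{Main obstacle.} The delicate step is (c). Fubini has to be applied correctly on the triangle, and one must check that after the renaming the arguments of \(\frac{\delta l}{\delta v}\) are evaluated at the outer time \(r\), while the curvature and the inner transport are evaluated at \(t\). The transport indices must also match, with \(\Gamma(\gamma)_r^t\) now carrying \(\gamma'(r)\) back to \(\gamma(t)\). Smoothness of the integrands, which holds for \(C^2\) curves, justifies the interchange.
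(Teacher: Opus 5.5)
Your proposal is correct and follows the paper's proof essentially step for step. Like the paper, you substitute \eqref{eq:delta-v-main}, integrate the $\frac{d}{dt}\bigl(\Gamma(\gamma)_t^0\delta\gamma(t)\bigr)$ term by parts using the duality of parallel transport, swap the order of integration on the triangle for the curvature term, and conclude from the arbitrariness of $\delta\gamma$. Your extra remarks only make explicit what the paper leaves implicit: that $\Gamma(\gamma)_0^t$ acting on covectors means $(\Gamma(\gamma)_t^0)^*$, and that any $\delta\gamma$ vanishing at the endpoints is realized by a variation such as $\exp_{\gamma(t)}(\epsilon\,\delta\gamma(t))$.
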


\begin{proof}
We compute the variation of the action \eqref{eq:action-conn-dep}. Using \eqref{eq:delta-v-main}, we have
\[
\begin{aligned}
\delta\mathcal S[\gamma] = \frac{d}{d\e}\bigg|_{\e=0} \mathcal S[\gamma_\e]
&=\int_0^T \left[\frac{\delta l}{\delta q}(\delta\gamma(t))+\frac{\delta l}{\delta v}(\delta v(t))\right]dt\\
&=\int_0^T \frac{\delta l}{\delta q}(\delta\gamma(t))\,dt
+\int_0^T \frac{\delta l}{\delta v}\left(\frac{d}{dt}(\Gamma(\gamma)_t^0\delta\gamma(t))\right)dt\\
&\quad -\int_0^T \frac{\delta l}{\delta v}\left(\Gamma(\gamma)_t^0 \left( T(\gamma'(t),\delta\gamma(t))\right) \right) dt\\
&\quad -\int_0^T \frac{\delta l}{\delta v}\left(\int_0^t \Gamma(\gamma)_r^0 \left(R(\gamma'(r),\delta\gamma(r))\,\Gamma(\gamma)_t^r\gamma'(t)\right)dr\right)dt.
\end{aligned}
\]
Integrating the second term at the right-hand side of the second equality by parts and using \(\delta\gamma(0)=\delta\gamma(T)=0\) as well as the duality of parallel transport, we obtain
\[
\int_0^T \frac{\delta l}{\delta v}\left(\frac{d}{dt}(\Gamma(\gamma)_t^0\delta\gamma(t))\right)dt
= -\int_0^T \left(\frac{d}{dt}\frac{\delta l}{\delta v}\right)\bigl(\Gamma(\gamma)_t^0\delta\gamma(t)\bigr)\,dt = -\int_0^T \Gamma(\gamma)_t^0\left(\frac{d}{dt}\frac{\delta l}{\delta v}\right)\bigl(\delta\gamma(t)\bigr)\,dt.
\]
For the curvature term, we swap the order of integration:
\[
\begin{aligned}
&\ \int_0^T \frac{\delta l}{\delta v}\bigl(t,\gamma(t),v(t)\bigr) \left(\int_0^t \Gamma(\gamma)_r^0 \left( R(\gamma'(r),\delta\gamma(r))\,\Gamma(\gamma)_t^r\gamma'(t)\right)dr\right)dt \\
=&\ \int_0^T \int_t^T \frac{\delta l}{\delta v}\bigl(r,\gamma(r),v(r)\bigr) \left( \Gamma(\gamma)_t^0 \left( R(\gamma'(t),\delta\gamma(t))\,\Gamma(\gamma)_r^t\gamma'(r)\right)\right)dr\,dt.
\end{aligned}
\]
Collecting all terms, we get
\[
\begin{aligned}
\delta\mathcal S[\gamma]
=\int_0^T \Bigg[ &
\frac{\delta l}{\delta q}
-\Gamma(\gamma)_0^t\left(\frac{d}{dt}\frac{\delta l}{\delta v}\right)
-\frac{\delta l}{\delta v}\left(\Gamma(\gamma)_t^0 \left(T(\gamma'(t),\cdot)\right)\right)\\
& -\int_t^T \frac{\delta l}{\delta v}\bigl(r,\gamma(r),v(r)\bigr) \left(\Gamma(\gamma)_t^0 \left(R(\gamma'(t),\cdot)\,\Gamma(\gamma)_r^t\gamma'(r)\right)\right)dr
\Bigg](\delta\gamma(t))\,dt.
\end{aligned}
\]
Since \(\delta\gamma(t)\) is arbitrary, we obtain the desired result.
\end{proof}

\begin{remark}
(i). Equation \eqref{eq:int-diff-EL} is an integro-differential equation, involving an integral over the remaining part of the curve. This term arises from the curvature and represents a future-dependence effect: the dynamics at time \(t\) depend on the future evolution of the transported velocity. 
The presence of the ``future'' integral is not a violation of causality, but rather a manifestation of the fact that the Euler--Lagrange equation for a nonlocal action naturally yields a two-point boundary value problem. When we change variables from the standard \(\dot\gamma(t)\) to \(v(t)=\Gamma(\gamma)_t^0\dot\gamma(t)\), the nonlocality of the variable is transferred to the equation.

(iii). A local-coordinate formulation of the integro-differential Euler--Lagrange equation \eqref{eq:int-diff-EL} using a parallel frame can be found in Appendix \ref{sec-app-eqn}.
\end{remark}


\subsection{Path-dependent reduction on Lie groups}

We now apply the connection-dependent variational principle to the case of a Lie group \(G\) equipped with a Cartan--Schouten connection of parameter \(\lambda\in[0,1]\). Let \(\mathfrak g\) be the Lie algebra of \(G\).

For a curve \(g(\cdot)\) on $G$, define its transported velocity
\[
v(t):= \Gamma(g)_t^0\dot g(t)\in T_{g(0)}G \simeq\mathfrak g.
\]
Using the explicit expression for the parallel transport in a Cartan--Schouten connection, this can be written as
\[
v(t)=g(0) \mathcal T\exp\left(\lambda\int_0^t \ad_{g(r)^{-1}g'(r)}dr\right)g(t)^{-1}\dot g(t),
\]
where \(\mathcal T\exp\) denotes the ordered exponential. We also defined the transported infinitesimal variation by
\[
w(t):= \Gamma(g)_t^0\delta g(t)
=g(0) \mathcal T\exp\left(\lambda\int_0^t \ad_{g(r)^{-1}g'(r)}dr\right)g(t)^{-1}\delta g(t),
\]
with \(w(0)=w(T)=0\).

\begin{lemma}
We have the following formula:
\begin{equation}\label{eq:delta-v-lie-lambda}
\delta v(t) = \dot w(t) - (2\lambda-1) \ad_{v(t)} w(t) - \lambda (1-\lambda) \ad_{v(t)} \int_0^t \ad_{v(r)} w(r) dr.
\end{equation}
\end{lemma}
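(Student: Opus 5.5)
The plan is to specialize Lemma~\ref{lem:variation-transported-velocity} to the $\lambda$-connection and to rewrite its three terms in the transported variables $v$, $w$.

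\textbf{Trivialization of the transport.} By \eqref{parr-tran-Cartan-lambda}, for $0\le r\le t$ we have
\[
\Gamma(g)_t^0 = d(\mathrm L_{g(0)})_e\,\Phi(t)\,d(\mathrm L_{g(t)^{-1}})_{g(t)},\qquad
\Phi(t):=\mathcal T\exp\Big(\lambda\int_0^t \ad_{u(r)}\,dr\Big),\quad u:=g^{-1}g'.
\]
Throughout, $T_{g(0)}G$ is identified with $\mathfrak g$ via $d(\mathrm L_{g(0)^{-1}})$, which is the identification used to define $v$ and $w$.

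The key structural fact is that $\Phi(t)$ is a Lie algebra automorphism. It solves $\dot\Phi=\lambda\,\ad_{u(t)}\Phi$ with $\Phi(0)=I$, and the generator $\lambda\ad_{u}$ is a derivation of $[\cdot,\cdot]_{\mathfrak g}$ by the Jacobi identity. Hence $t\mapsto[\Phi a,\Phi b]_{\mathfrak g}-\Phi[a,b]_{\mathfrak g}$ satisfies a linear homogeneous ODE with zero initial value, so it vanishes identically.

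Transports between intermediate times compose as $\Gamma(g)_r^0\Gamma(g)_t^r=\Gamma(g)_t^0$. Combining this with the automorphism property, the left-trivialized transport carries brackets to brackets.

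\textbf{The torsion and curvature terms.} By \eqref{tor-cur-lie-alg}, the left-trivialized torsion and curvature are $(2\lambda-1)[\cdot,\cdot]_{\mathfrak g}$ and $\lambda(\lambda-1)[[\cdot,\cdot]_{\mathfrak g},\cdot]_{\mathfrak g}$. These are pointwise given by brackets, so they commute with the automorphism $\Phi$. (This is the concrete form of $\nabla T=0$ and $\nabla R=0$.)

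For the torsion term this gives
\[
\Gamma(g)_t^0\,T(g'(t),\delta g(t))=(2\lambda-1)\,[v(t),w(t)]_{\mathfrak g}=(2\lambda-1)\ad_{v(t)}w(t).
\]

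For the curvature term, I use $\Gamma(g)_t^r\gamma'(t)=\Gamma(g)_0^r v(t)$ and transport everything back to time $0$. This gives
\[
\Gamma(g)_r^0\big(R(g'(r),\delta g(r))\,\Gamma(g)_t^r g'(t)\big)
=\lambda(\lambda-1)\big[[v(r),w(r)]_{\mathfrak g},v(t)\big]_{\mathfrak g}
=\lambda(1-\lambda)\,\ad_{v(t)}\ad_{v(r)}w(r).
\]
Integrating over $r\in[0,t]$ and pulling the constant operator $\ad_{v(t)}$ outside the integral produces the last term of \eqref{eq:delta-v-lie-lambda}, with the correct minus sign inherited from \eqref{eq:delta-v-main}.

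\textbf{The first term.} Here $\frac{d}{dt}\big(\Gamma(g)_t^0\delta g(t)\big)=\dot w(t)$ directly from the definition of $w$. Collecting the three pieces gives \eqref{eq:delta-v-lie-lambda}.

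\textbf{Main obstacle.} I expect the delicate step to be checking that the identifications are consistent. The tensors $T$ and $R$ at $g(0)$ and at $g(r)$ must both be read through left translation, so that the transported expressions really equal brackets of $v$ and $w$. The sign conventions must also be tracked: $[[a,b],c]=-\ad_c\ad_a b$, with the factor $\lambda(\lambda-1)=-\lambda(1-\lambda)$. To settle this, I would verify the automorphism claim for the full composite $d(\mathrm L_{g(0)})\Phi(t)\,d(\mathrm L_{g(t)^{-1}})$ acting on left-invariant tensors, rather than appealing abstractly to $\nabla R=0$.
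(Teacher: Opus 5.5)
Your proposal is correct and follows the paper's proof. Like the paper, you specialize \eqref{eq:delta-v-main}, move $T$ and $R$ back to $g(0)$ by parallel invariance, and insert \eqref{tor-cur-lie-alg}; where the paper simply cites $\nabla T=0$, $\nabla R=0$, you make this concrete as the automorphism property of the trivialized transport. One small fix is needed: the left-trivialized $\Gamma(g)_t^0$ is the inverse of $U(t)=\mathcal T\exp\bigl(-\lambda\int_0^t\ad_{u(r)}\,dr\bigr)$, so it satisfies $\dot\Phi=\lambda\,\Phi\,\ad_{u(t)}$ rather than $\dot\Phi=\lambda\,\ad_{u(t)}\Phi$ (these differ when the $\ad_{u(r)}$ do not commute); apply your derivation argument to $U$, which does solve $\dot U=-\lambda\,\ad_{u}U$, and conclude that its inverse is also an automorphism.
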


\begin{proof}
Applying the general formula \eqref{eq:delta-v-main} and the fact that $\nabla R=0$ and $\nabla T=0$,
\begin{equation*}
  \begin{aligned}
    \delta v(t) &= \dot w(t) - \Gamma(\gamma)_t^0 \big( T(\dot g(t),\delta g(t)) \big) - \int_0^t \Gamma(g)^0_r \big( R\left( \dot g(r), \delta g(r) \right) \Gamma(g)_t^r \dot g(t) \big) dr \\
    &= \dot w(t) - T(v(t), w(t)) - \int_0^t R\left( v(r), w(r) \right) v(t) dr.
  \end{aligned}
\end{equation*}
Then, using the explicit expressions in \eqref{tor-cur-lie-alg} for the torsion and curvature of the Cartan--Schouten connection,
we obtain the desired formula.
\end{proof}


\begin{theorem}[Integro-differential Euler--Poincar\'e equation]
A curve $g(\cdot)\in C^2([0,T],G)$ is a critical point of the action \(\mathcal S[g]=\int_0^T l(t,g(t),v(t))dt\) under fixed-endpoint variations if and only if \((g(\cdot),v(\cdot))\) satisfies the integro-differential Euler--Poincar\'e equation
\begin{equation}\label{eq:reduced-int-diff-EP}
\Gamma^*(g)_t^0\frac{\delta l}{\delta g}
=\frac{d}{dt}\frac{\delta l}{\delta v}
+(2\lambda-1)\ad_{v(t)}^*\frac{\delta l}{\delta v}
+\lambda(1-\lambda)\ad_{v(t)}^*\int_t^T \ad_{v(r)}^*\frac{\delta l}{\delta v}\bigl(r,g(r),v(r)\bigr) \,dr.
\end{equation}
\end{theorem}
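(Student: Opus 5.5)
The plan is to compute the first variation of $\mathcal S$ directly in terms of the transported variation $w(t)=\Gamma(g)_t^0\delta g(t)$, using the Lie-algebraic formula \eqref{eq:delta-v-lie-lambda} for $\delta v$. Since $\Gamma(g)_t^0:T_{g(t)}G\to T_{g(0)}G\simeq\mathfrak g$ is a linear isomorphism, $w$ ranges over all curves in $\mathfrak g$ vanishing at $0$ and $T$ exactly when $\delta g$ ranges over fixed-endpoint variations. It is therefore legitimate to extract the Euler--Poincar\'e equation as the coefficient of $w(t)$. I would first write
\[
\delta\mathcal S=\int_0^T\Big[\frac{\delta l}{\delta g}(\delta g(t))+\frac{\delta l}{\delta v}(\delta v(t))\Big]dt.
\]
I would then rewrite the first term as $\big(\Gamma^*(g)_t^0\frac{\delta l}{\delta g}\big)(w(t))$, with $\Gamma^*(g)_t^0=(\Gamma(g)_0^t)^*$ the dual transport, so that everything is paired with $w(t)$ in $\mathfrak g$.

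Next I would substitute \eqref{eq:delta-v-lie-lambda} and treat its three terms separately.
\begin{enumerate}
\item[(i)] For $\frac{\delta l}{\delta v}(\dot w)$, integrate by parts using $w(0)=w(T)=0$. This gives $-\int_0^T\big(\frac{d}{dt}\frac{\delta l}{\delta v}\big)(w)\,dt$.
\item[(ii)] For $-(2\lambda-1)\frac{\delta l}{\delta v}(\ad_{v}w)$, dualize. This gives $-(2\lambda-1)\big(\ad^*_{v(t)}\frac{\delta l}{\delta v}\big)(w(t))$.
\item[(iii)] For the double integral, dualize the outer $\ad_{v(t)}$ and then swap the order of integration (Fubini on the triangle $0\le r\le t\le T$), exactly as in the proof of \eqref{eq:int-diff-EL}. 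The term becomes
\[
-\lambda(1-\lambda)\int_0^T\Big(\ad^*_{v(r)}\int_r^T\ad^*_{v(s)}\tfrac{\delta l}{\delta v}(s)\,ds\Big)(w(r))\,dr.
\]
After relabeling, the inner object is $\ad^*_{v(t)}\int_t^T\ad^*_{v(r)}\frac{\delta l}{\delta v}(r)\,dr$, paired with $w(t)$.
\end{enumerate}

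Collecting the terms, $\delta\mathcal S=\int_0^T\langle E(t),w(t)\rangle\,dt$, where $E(t)$ is the left side of \eqref{eq:reduced-int-diff-EP} minus its right side. The fundamental lemma of the calculus of variations then gives the equivalence in both directions.

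The main obstacle is bookkeeping in step (iii). One pitfall is the order in which the two $\ad^*$'s appear after dualization: the inner integral carries $\ad^*_{v(r)}$ at the later time and the outer factor is $\ad^*_{v(t)}$ at the current time. The other is making sure signs are consistent with the torsion and curvature conventions \eqref{tor-cur-lie-alg} already absorbed into \eqref{eq:delta-v-lie-lambda}. A secondary point to verify is the first term: the identification of $\frac{\delta l}{\delta g}(\delta g)$ with the pairing against $w$ through $\Gamma^*(g)_t^0$, with $g(0)$ fixed so that $T_{g(0)}G\simeq\mathfrak g$ is a fixed identification independent of $\epsilon$.
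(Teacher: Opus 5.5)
Your proposal is correct. It is essentially the paper's argument with the two stages done in the opposite order: the paper substitutes into the manifold-level equation \eqref{eq:int-diff-EL} and only then pulls the torsion and curvature terms back to $g(0)$ via $\nabla T=\nabla R=0$ and \eqref{tor-cur-lie-alg}, whereas you start from the already-trivialized variation \eqref{eq:delta-v-lie-lambda} and redo the integration by parts and the Fubini swap directly in $\mathfrak g$. Your bookkeeping checks out, including the order $\ad^*_{v(t)}\int_t^T\ad^*_{v(r)}\frac{\delta l}{\delta v}(r)\,dr$ and the identification $\Gamma^*(g)_t^0=(\Gamma(g)_0^t)^*$.
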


\begin{proof}
Substituting this into the general Euler--Lagrange equation \eqref{eq:int-diff-EL} gives
\[
\begin{aligned}
\frac{\delta l}{\delta g}
&=\Gamma(g)_0^t \left(\frac{d}{dt}\frac{\delta l}{\delta v}\right)
+\frac{\delta l}{\delta v}\left(\Gamma(g)_t^0 \big( T(\dot g(t),\cdot) \big) \right)\\
&\quad +\int_t^T \frac{\delta l}{\delta v}\bigl(r,g(r),v(r)\bigr) \left(\Gamma(g)_t^0 \big( R(\dot g(t),\cdot)\,\Gamma(g)_r^t\dot g(r) \big) \right)dr.
\end{aligned}
\]
Now, for any \(u\in T_{g(t)}G\), using the invariance of the curvature and torsion under left translation (and the fact that \(\nabla R=0\), \(\nabla T=0\)), we obtain
\[
\Gamma^*(g)_t^0\frac{\delta l}{\delta g}
=\frac{d}{dt}\frac{\delta l}{\delta v}
+\frac{\delta l}{\delta v}\big(T(v(t),\cdot)\big) +\int_t^T \frac{\delta l}{\delta v}\bigl(r,g(r),v(r)\bigr) \big(R(v(t),\cdot)v(r)\big)dr.
\]
Finally, using the explicit expressions \eqref{tor-cur-lie-alg}, we obtain the reduced equation.
\end{proof}

\begin{remark}
When \(\lambda=0\), equation \eqref{eq:delta-v-lie-lambda} coincides with \eqref{eq:delta-v-left} and equation \eqref{eq:reduced-int-diff-EP} reduces to the standard left Euler--Poincar\'e equation \eqref{eq:EP-left}.
When \(\lambda=1\), 
\eqref{eq:reduced-int-diff-EP} reduces to the right Euler--Poincar\'e equation.
For the torsion-free case \(\lambda=1/2\), the first-order term vanishes, but the curvature integral term remains, providing a path-dependent correction that is absent in the classical flat cases.
\end{remark}

\subsubsection*{Reformulation as a standard two-point boundary value problem}

The integro-differential equation can be converted into a system of ordinary differential equations by introducing an auxiliary variable that absorbs the future integral. Define
\[
\xi(t):=\int_t^T \ad_{v(r)}^*\frac{\delta l}{\delta v}(r)\,dr \in \mathfrak g^*.
\]
Then \(\xi(t)\) satisfies a terminal value problem
Substituting this into \eqref{eq:reduced-int-diff-EP}, together with the reconstruction equation \(\dot g(t)=\Gamma(g)_0^t v(t)\) and the reduced Legendre transform $\mu = \frac{\delta l}{\delta v}$, yields the following system:

\begin{equation}\label{eq:reduced-system}
\left\{
\begin{aligned}
&\Gamma^*(g)_t^0\frac{\delta l}{\delta g} = \dot\mu(t)
+(2\lambda-1)\ad_{v(t)}^*\mu(t)
+\lambda(1-\lambda)\ad_{v(t)}^*\xi(t), \\
&\dot\xi(t)=-\ad_{v(t)}^*\mu(t),\qquad \xi(T)=0, \\
&\dot g(t)=\Gamma(g)_0^t v(t),\qquad g(0)=g_0,g(T)=g_T, \\
&\mu(t) = \frac{\delta l}{\delta v}(t).
\end{aligned}
\right.
\end{equation}
This is now a system of ordinary differential equations coupled with mixed conditions. The unknowns are \(g(\cdot)\), \(v(\cdot)\), and \(\mu(\cdot)\), with boundary conditions prescribed at both ends of the interval.

\section{Solvability of the reduced integro-differential equation}\label{sec-solv}

The reduced integro-differential Euler--Poincar\'e equation \eqref{eq:reduced-int-diff-EP} derived in the previous section has an unusual structure: the left-hand side depends on the history of the curve (through the parallel transport from \(0\) to \(t\)), while the right-hand side contains an integral over the future interval \([t,T]\). This may raise concerns about the well-posedness and solvability of the equation. In this section we show that, despite its nonlocal appearance, the equation can be solved by standard methods. The nilpotent example of the three-dimensional Heisenberg group is discussed in Appendix \ref{sec-app-H3}.

\subsection{Discretizations}

We first discretize the reconstruction equation on the Lie group and then formulate a two-level scheme for the resulting coupled forward--backward system.

\subsubsection*{Discretization of the reconstruction equation}

The reconstruction equation
\(
\dot g(t)=\Gamma(g)_0^t v(t)
\)
can be interpreted as a differential equation on the Lie group \(G\) of the form
\(
\dot g(t)=g(t)\,u(t),
\)
where
\begin{equation}\label{body-v}
  u(t):= \mathcal T \exp\left( - \lambda \int_0^t \ad_{g(r)^{-1} g'(r)} dr \right) g(0)^{-1} v(t)\in\mathfrak g
\end{equation}
is the body velocity at \(g(t)\). This is a time-dependent linear ODE on the Lie algebra. A standard geometric integration approach is the Lie--Euler method, to approximate the curve \(g(\cdot)\) by a piecewise geodesic (or piecewise one-parameter subgroup) on each small time interval \([t_k,t_{k+1}]\). Specifically, on each interval we replace the body velocity \(u(t)\) by a constant value \(u_k\), so that the solution over that interval is approximated by the group exponential: $g(t_k) \approx g_k$ and
\begin{equation}\label{disc-grp}
g_{k+1} = g_k\exp(h\,u_k),\qquad g_0=g(0), g_N=g(T),
\end{equation}
for \(k=0,\dots,N-1\), with step size \(h=t_{k+1}-t_k\). 

To implement this approximation, we need to define \(u_k\). From definition \eqref{body-v}, we have
\begin{equation}\label{disc-body-v}
u_k = P_k g_0^{-1} v_k,
\end{equation}
where $v_k = v(t_k)$ is the discretization of the known curve $v(\cdot)$, \(P_k: \mathfrak g\to \mathfrak g\) is the discrete approximation of the ordered exponential
\[
P_k \approx \mathcal T\exp\left(-\lambda\int_0^{t_k}\ad_{g(r)^{-1}g'(r)}\,dr\right).
\]
The update of \(P_k\) follows from the same piecewise-geodesic approximation: on the interval \([t_k,t_{k+1}]\), the body velocity is constant equal to \(u_k\), so the ordered exponential becomes an ordinary exponential:
\begin{equation}\label{disc-exp}
  P_{k+1}=\exp\left(-\lambda h\,\ad_{u_k}\right) P_k, \qquad P_0 = I.
\end{equation}

\begin{remark}
This piecewise-geodesic approximation respects the fact that, for a Cartan--Schouten connection, the geodesics are exactly the one-parameter subgroups. 
\end{remark}

\subsubsection*{Discretization of the momentum and auxiliary equations}

We discretize the Legendre transform
\begin{equation}\label{disc-Legendre}
\mu_k:=\frac{\delta l}{\delta v}(t_k,g_k,v_k)\in\mathfrak g^*
\end{equation}
The backward equation \(\dot\xi=-\ad_v^*\mu\) with \(\xi(T)=0\) is discretized by a backward Euler step:
\begin{equation}\label{disc-aux}
\xi_k=\xi_{k+1}+h\,\ad_{v_k}^*\mu_k,\qquad \xi_N=0.
\end{equation}
Thus, once the sequences \(\{v_k\}\) and \(\{\mu_k\}\) are known (or guessed), all \(\xi_k\) are determined by backward recursion.

The momentum equation (the first of \eqref{eq:reduced-system}) is discretized using an implicit Euler step for the derivative of \(\mu\). The left-hand side requires the parallel transport of the covector \(\frac{\delta l}{\delta g}\) from \(t\) back to \(0\). In the discrete setting, this is achieved via the dual of the operator $P_k$. Define
\[
\eta(t):= \Gamma^*(g)_t^0 \frac{\delta l}{\delta g},
\]
which is a covector at \(g_0\). In terms of the discrete data, we have that $\eta(t_k)$ is approximated by
\begin{equation}\label{disc-mmt}
\eta_k = d\big(\mathrm{L}_{g_0}\big)^* P_k^* d\big(\mathrm{L}_{g_k^{-1}}\big)^* \left(\frac{\delta l}{\delta g}(t_k,g_k,v_k)\right),
\end{equation}
where \(P_k^*\) is the dual of the linear map \(P_k\). 
The implicit Euler discretization of the momentum equation then reads,
\begin{equation}\label{disc-mmt-eqn}
\eta_k = \frac{\mu_{k+1}-\mu_k}{h} + (2\lambda-1)\ad_{v_k}^*\mu_k + \lambda(1-\lambda)\ad_{v_k}^*\xi_k.
\end{equation}
Equations \eqref{disc-grp}--\eqref{disc-mmt-eqn} form a closed algebraic system for the unknowns \(\{g_k,v_k,\mu_k,\xi_k,\eta_k\}\).

\subsection{A two-level scheme}\label{subsec-num-sch}

We present a general numerical scheme for solving the reduced integro-differential Euler--Poincar\'e equation \eqref{eq:reduced-system}. The scheme is a combination of a shooting method for the initial velocity \(v(0)\), and a fixed‑point iteration for the auxiliary variables \(\xi\).
It applies to any Lie group, including non-nilpotent ones such as \(\mathrm{SO}(3)\).

The boundary conditions are \(g_0\) prescribed and \(g_N\) required to equal a given \(g_T\). We solve the system using a combination of a shooting method for the initial velocity \(v_0\in\mathfrak g\), and a fixed‑point iteration for the auxiliary variables \(\{\xi_k\}\).

Step 1. Initialization: Choose an initial guess for \(v_0\) (e.g., \(v_0=0\), or the body velocity that would connect \(g_0\) to \(g_T\) along a geodesic). Initialize the auxiliary sequence \(\xi_k^{\text{init}}=0\) for all \(k=0,\dots,N-1\) (corresponding to neglecting the curvature correction term).

Step 2. Forward integration (inner loop): Given the current guess for \(\{\xi_k\}\), we determine the discrete trajectory \(\{g_k,v_k,\mu_k\}\) sequentially as follows. 
\begin{itemize}
  \item Assume we are at step \(k\) with known \(g_k\), \(v_k\), \(\xi_k\), and $P_k$. 
  \item Compute the body velocity \(u_k\) via \eqref{disc-body-v}, $\mu_k$ via \eqref{disc-Legendre}, and \(\eta_k\) via \eqref{disc-mmt}.
  \item Update the group element $g_{k+1}$ via \eqref{disc-grp} and the ordered exponential $P_{k+1}$ via \eqref{disc-exp}.
  \item Solve the implicit Euler equation \eqref{disc-mmt-eqn} for \(\mu_{k+1}\).
  \item Invert the Legendre transform \eqref{disc-Legendre} (as the Lagrangian is regular) to obtain \(v_{k+1}\) from \(g_{k+1}, \mu_{k+1}\).
\end{itemize}
   This completes one forward step. Thus, the entire sequence \(\{g_k,v_k,\mu_k\}\) is generated recursively.

Step 3. Shooting adjustment (inner loop): After the forward integration, we obtain the final group element \(g_N\) as a function of the initial guess \(v_0\). We compare \(g_N\) with the prescribed \(g_T\) and update \(v_0\) using a Newton method. The Jacobian \(\partial g_N/\partial v_0\) is approximated by finite differences. This inner iteration continues until the terminal error \(\|g_N-g_T\|\) is below a tolerance. The result is a trajectory \(\{g_k,v_k,\mu_k\}\) that satisfies the terminal condition $g(T) =g_T$ for the current \(\{\xi_k\}\).

Step 4. Update of auxiliary variables (outer loop): With the new \(\{v_k,\mu_k\}\) from the inner loop, we recompute the auxiliary sequence \(\{\xi_k\}\) using the backward recurrence \eqref{disc-aux}, starting from \(\xi_N=0\) and going backwards to \(k=0\). This yields an updated sequence \(\{\xi_k^{\text{new}}\}\).

Step 5. Outer fixed‑point iteration: Replace the old \(\{\xi_k\}\) by the new values and repeat the entire procedure (Steps 2--4) until the changes in \(\xi_k\) (and hence in the trajectory) fall below a tolerance. In practice, this outer iteration converges quickly for small step sizes \(h\).

Step 6. Convergence and output: Once the outer iteration has converged, the sequences \(\{g_k,v_k\}\) form a discrete solution of the integro‑differential system that satisfies both the momentum equation and the terminal condition \(g_N=g_T\). 

This two‑level structure, inner shooting for \(v_0\) and outer fixed‑point iteration for \(\{\xi_k\}\), decouples the local dynamics from the non‑local terms, making the algorithm robust and easy to implement. The pseudocode of the two‑level scheme and more discussions about its accuracy and improvement can be found in Appendix \ref{sec-app-scheme}.

\subsection{Application to the rigid body}\label{subsec-rigid-body}

We now specialize the general numerical scheme of the preceding subsection to the non-nilpotent rotation group \(\mathrm{SO}(3)\), the configuration space for the motion of a rigid body.
We use the standard identifications \(\mathfrak{so}(3)\cong\mathbb R^3\) and \(\mathfrak{so}(3)^*\cong\mathbb R^3\) via the hat and breve maps. The basis conventions are collected in Appendix~\ref{sec-app-rigid-body}.

\subsubsection*{The free rigid body}

We first consider the free rigid body, i.e. a Lagrangian consisting only of kinetic energy:
\[
l(g,v) = \frac12\, v\cdot \mathbb I v,
\]
where \(\mathbb I = \operatorname{diag}(I_1,I_2,I_3)\) is the inertia tensor. The momentum is \(\mu = \mathbb I v\). Since the Lagrangian does not depend on \(g\), the reduced Euler--Poincar\'e equation \eqref{eq:reduced-int-diff-EP} includes the curvature correction term:
\[
0 = \mathbb I \dot v - (2\lambda-1) v\times (\mathbb I v) - \lambda(1-\lambda)\, v\times \xi,
\]
with \(\dot\xi = - v\times (\mathbb I v)\), \(\xi(T)=0\). The reconstruction of the attitude \(g(\cdot)\) from the body velocity \(v(\cdot)\) is given by \(\dot g = \Gamma(g)_0^t v\), \(g(0)=g_0\), \(g(T)=g_T\).
\begin{remark}
For the $(-)$-connection (\(\lambda=0\)),
the equation becomes
\(
\mathbb I \dot v + v\times (\mathbb I v) = 0,
\)
which is the classical Euler equation for a free rigid body \cite[Section 1.5]{HSS09}.
\end{remark}

For brevity, the harmonic oscillator on \(\mathrm{SO}(3)\) and the complete discretization are given in Appendix~\ref{sec-app-rigid-body}. We now turn directly to the numerical comparison of the two models.

\subsubsection*{Numerical examples}

We now present numerical results for the free rigid body and the harmonic oscillator Lagrangian on \(\mathrm{SO}(3)\). The asymmetric inertia tensor we use is \(\mathbb I=\mathrm{diag}(2,3,4)\). 
The discrete systems detailed in Appendix~\ref{sec-app-rigid-body} are solved using the two-level algorithm described in Subsection \ref{subsec-num-sch}, with the inner shooting loop implemented via the Levenberg--Marquardt method.
The start and end points on the unit sphere are chosen and the target attitude \(g_T\in\mathrm{SO}(3)\) is the unique rotation mapping \(a_0\) to \(a_T\).
The start and end points are marked with black dots. Red dashed: \(\lambda=0\); green solid: \(\lambda=\frac12\); blue dotted: \(\lambda=1\).

\paragraph{Free rigid body.}
For the free rigid body, the Lagrangian is purely kinetic: \(l(g,v)=\frac12 v\cdot \mathbb I v\). Figure \ref{fig:free_trajectories} shows the space trajectories of the body-fixed vector \(a(t)=g(t)a_0\) on the unit sphere. 
The start and end points on the unit sphere are chosen as
\[
a_0=\frac{(-1,-1,-1)}{\sqrt{3}},\qquad a_T=\frac{(0.9,1.0,1.0)}{\sqrt{2.81}},
\]
The integration time is \(T=50\) with \(N=1000\) steps, and the tolerance is set to \(10^{-8}\).
For \(\lambda=0\) and \(\lambda=1\), the trajectories are symmetric, reflecting the left-right symmetry of the free Euler equations. The \(\lambda=\frac12\) trajectory exhibits a small drift due to the curvature correction term, but remains close to the symmetric ones.

\paragraph{Harmonic oscillator.}
For the harmonic oscillator, the potential \(V(g)=\frac12(3-\operatorname{tr}g)\) is added. Figure~\ref{fig:harmonic_trajectories} displays the corresponding trajectories. 
The start and end points on the unit sphere are chosen as
\[
a_0=\frac{(-0.7, -1.0, -0.7)}{\sqrt{1.98}},\qquad a_T=\frac{(0.8, 0.8, 1.0)}{\sqrt{2.28}},
\]
The integration time is \(T=12\) with \(N=800\) steps, and the tolerance is set to \(10^{-6}\).
The symmetry between \(\lambda=0\) and \(\lambda=1\) observed in the free case is now broken: the potential gradient term \(\eta_k\) does not change sign under \(\lambda\mapsto 1-\lambda\).

\begin{figure}[htbp]
\centering
\begin{subfigure}[b]{0.49\textwidth}
\includegraphics[width=\textwidth]{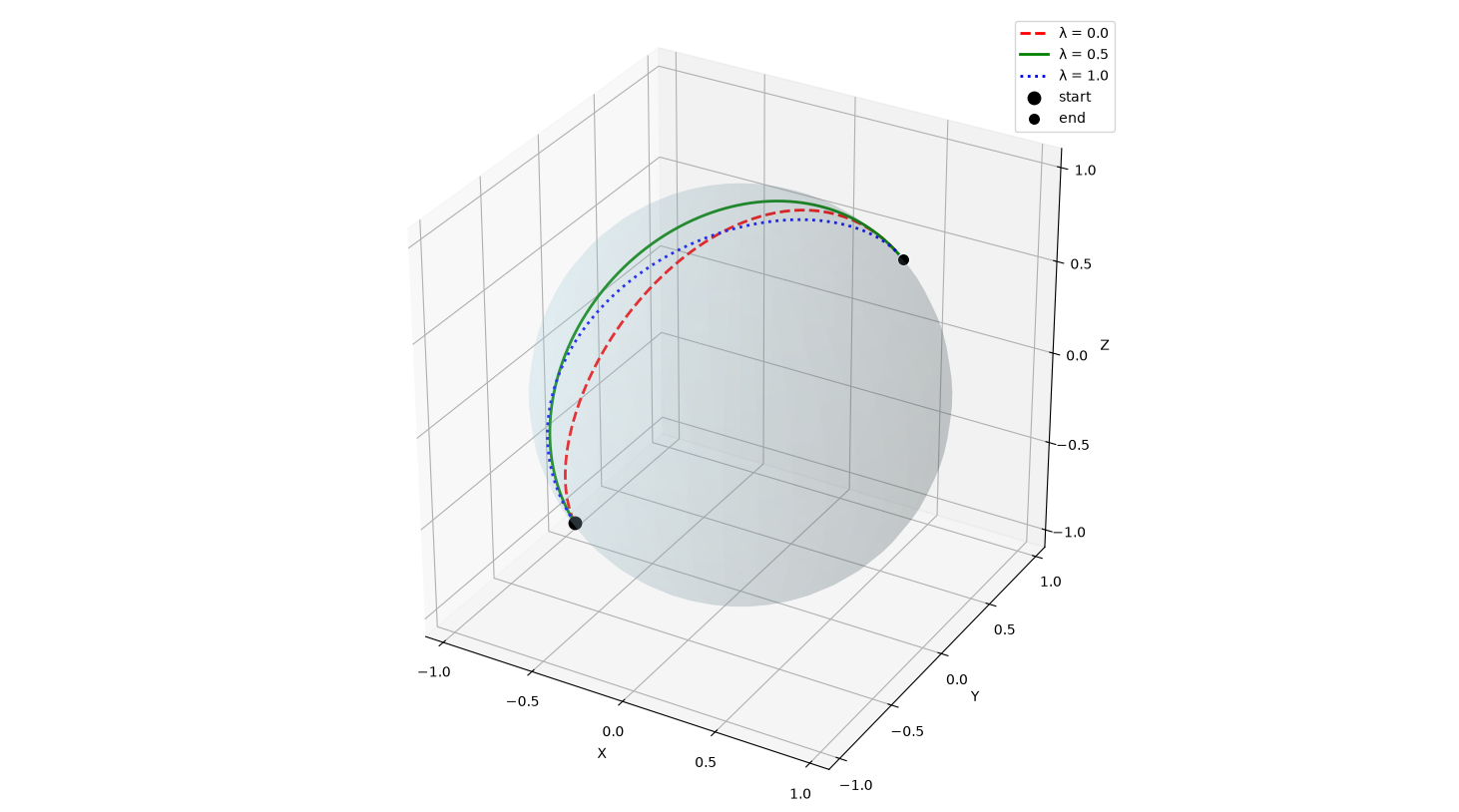}
\caption{}
\label{fig:free_trajectories}
\end{subfigure}
\hfill
\begin{subfigure}[b]{0.49\textwidth}
\includegraphics[width=\textwidth]{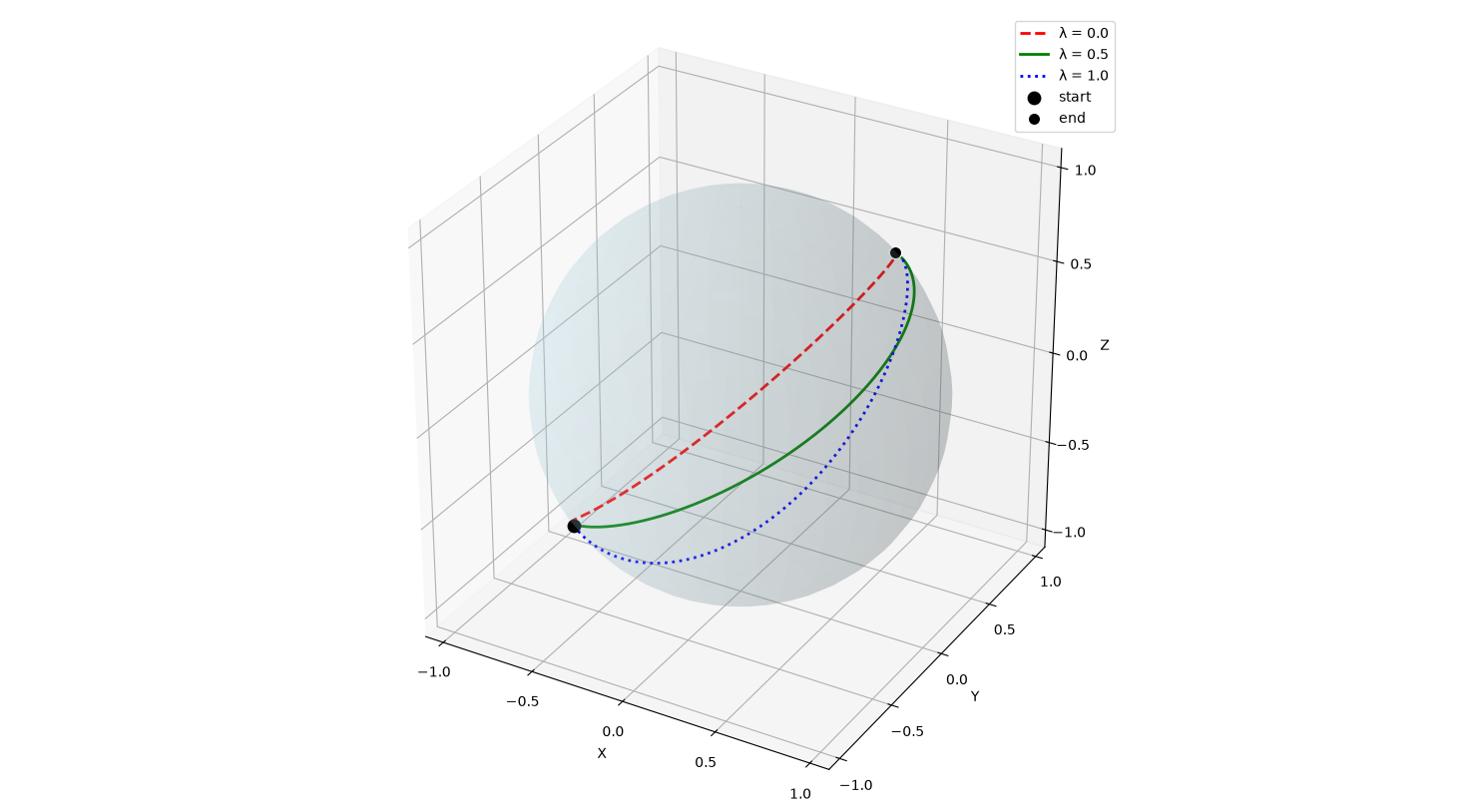}
\caption{}
\label{fig:harmonic_trajectories}
\end{subfigure}
\caption{Trajectories on the unit sphere for the free rigid body (left) and the harmonic oscillator (right).}
\label{fig:trajectories}
\end{figure}



\section{Conclusion and outlook}\label{sec-conclusion}

We have studied the role of invariant affine connections in geometric mechanics on Lie groups from two complementary viewpoints. For ordinary Lagrangian and Hamiltonian systems, we formulated the global Euler--Lagrange and Hamilton equations using an arbitrary left- or right-invariant connection and established their equivalence with the Euler--Poincar\'e and Lie--Poisson equations. Although the global formulations explicitly involve the connection, all connection-dependent terms cancel after reduction. This explains geometrically why the classical reduced equations are independent of the chosen invariant connection. For Cartan--Schouten connections, we then introduced a connection-dependent variational principle based on velocities parallel-transported to the initial point. The resulting integro-differential Euler--Poincar\'e equation exhibits both path dependence through parallel transport and future dependence through curvature. By introducing an auxiliary variable, the curvature integral can be converted into a terminal-value equation, revealing the underlying two-point boundary value structure. The two-level scheme and the examples on the Heisenberg group and \(\mathrm{SO}(3)\) demonstrate that this nonlocal system can be treated numerically while retaining its Lie-group geometry.
To study the stochastic case of geometric reduction, connections are indispensable: even the basic notion of a manifold-valued martingale, the stochastic counterpart of a conserved quantity, is defined relative to a connection, while stochastic variational principles likewise rely essentially on covariant constructions \cite{HZ23,HZ23b}. Thus, our analysis of invariant connections on Lie groups provides a natural foundation for stochastic reduction.

\section*{Acknowledgments}
The work of Q.~Huang is supported by the National Natural Science Foundation of China under Grant No. 12501241, the Basic Research Program of Jiangsu under Grant No. BK20251280, the Fundamental Research Funds for the Central Universities under Grant No. 2242026RCB0008, the Jiangsu Provincial Scientific Research Center of Applied Mathematics under Grant No. BK20233002 and the Start-up Research Fund of Southeast University under Grant No. RF1028624194.

\begin{appendices}

Appendix \ref{sec-app-selected-proofs} contains the proofs of selected results deferred from the main text. The right versions of invariant connections and reduction results are summarized in Appendix \ref{sec-app-right-conn}, Appendix \ref{sec-app-eqn} is devoted to a local-coordinate formulation of the integro-differential Euler--Lagrange equation, Appendix \ref{sec-app-H3} explains the nilpotent example of the three-dimensional Heisenberg group, Appendix \ref{sec-app-rigid-body} collects the Lie-algebra conventions, harmonic-oscillator model, and discrete equations used in the rigid-body computations, and Appendix \ref{sec-app-scheme} provides more details of the numerical algorithm.

\section{Proofs of selected results}\label{sec-app-selected-proofs}

\begin{proof}[Proof of Lemma~\ref{lem:horizontal-differential}]
For any tangent vector \(v\in T_xM\) and any point \(e\in E_x\), by \eqref{eq:hor-diff-local} and \eqref{eq:hrz-lift-vec},
\begin{equation}\label{hor-diff-vf}
d_x^\Gamma F|_e (v) = \Gamma^*(dF|_e)(v) = dF|_e(\Gamma(v)) = dF|_e(\bar v).
\end{equation}
Let \(\bar\gamma\) be a horizontal lift of $\gamma$ through \(e\in E_{\gamma(t)}\) at time $t$. Since \(\overline{\gamma'} = \bar\gamma'\) by \eqref{eq:hrz-lift-curve}, using \eqref{hor-diff-vf},
\[
d_x^\Gamma F|_e (\gamma'(t)) = dF|_e\bigl(\overline{\gamma'(t)}\bigr) = dF|_e(\bar\gamma'(t)) = \frac{d}{dt} F(\bar\gamma(t)) = \frac{d}{d\e}\bigg|_{\e=0} F\bigl(\Gamma(\gamma)_t^{t+\e} \bar\gamma(t)\bigr) = \frac{d}{d\e}\bigg|_{\e=0} F\bigl(\Gamma(\gamma)_t^{t+\e} e\bigr).
\]
The penultimate equality follows because \(\bar\gamma(t+\e) = \Gamma(\gamma)_t^{t+\e} \bar\gamma(t)\) by definition of horizontal lifts.
\end{proof}

\begin{proof}[Proof of Proposition~\ref{prop:hamilton-principle-phase}]
Let \(q_\epsilon(\cdot)\) be a variation of \(q(\cdot)\) with \(\delta q(t)=\frac{\partial}{\partial\epsilon}\big|_{\epsilon=0}q_\epsilon(t)\) and \(\delta q(0)=\delta q(T)=0\). Write \(p_\epsilon(\cdot)\) for the corresponding variation of the 1-form. Then by the chain rule \eqref{chain-rule-2},
\[
\begin{aligned}
\delta\overline{\mathcal S} [q(\cdot), p(\cdot)] &= \int_0^T \frac{\pt}{\pt\e}\bigg|_{\e=0} \bigl[p_\e(t)(\dot q_\e(t))-H(q_\e(t),p_\e(t),t)\bigr]dt \\
&= \int_0^T\left[\nabla_{\delta q}p(\dot q)+p(\delta\dot q)-d_q^\nabla H(\delta q)-\nabla_{\delta q}p(\pt_p H)\right] dt
\end{aligned}
\]
Using \eqref{eq:variation-der} for \(\delta\dot q\) and integrating by parts the term containing \(p\big(\frac{D}{dt} \delta q\big)\) gives
\[
\delta\overline{\mathcal S}=\int_0^T\Bigl[\nabla_{\delta q}p\bigl(\dot q-\pt_p H\bigr)
-\Bigl(\frac{D}{dt}p+p\left( i_{\dot q}T \right)+d_q^\nabla H\Bigr)(\delta q)\Bigr]dt .
\]
Since \(\delta q\) and the variation of \(p\) are independent, both parentheses must vanish, yielding \eqref{eq:HE-1}.
\end{proof}

\begin{proof}[Proof of Lemma~\ref{lem:left-lie-poisson}]
In reduced variables, the phase-space action becomes
\[
\overline{\mathcal S}= \int_0^T \bigl[\mu_-(v_-) - h_-(g,\mu_-)\bigr]dt,
\]
where \(v_-=g^{-1}\dot g\) as before. Take a variation \((\delta g,\delta\mu_-)\) with \(\delta g(0)=\delta g(T)=0\) and define \(w_-=g^{-1}\delta g\) as in \eqref{eq:w-left}. Using \eqref{eq:delta-v-left},
\[
\begin{split}
  \delta\overline{\mathcal S} &= \int_0^T \left[ \delta\mu_-(v_-) + \mu_-(\delta v_-) - \frac{\delta h_-}{\delta g}(\delta g) - \delta\mu_-\left( \frac{\delta h_-}{\delta \mu} \right) \right]dt \\
  &= \int_0^T \left[ \delta\mu_-(v_-) + \mu_-(\dot w_- + \ad_{v_-} w_-) - \frac{\delta h_-}{\delta g}(g w_-) - \delta\mu_-\left( \frac{\delta h_-}{\delta \mu} \right) \right]dt.
\end{split}
\]
Integrate by parts the term \(\mu_-(\dot w_-)\) and using \(w_-(0)=w_-(T)=0\):
\[
\int_0^T \mu_-(\dot w_-)\,dt = -\int_0^T \dot\mu_-(w_-)\,dt .
\]
Rewrite \(\frac{\delta h_-}{\delta g}(g w_-)= d\big( \mathrm{L}_{g(t)} \big)^*_e \big(\frac{\delta h_-}{\delta g}\big)(w_-)\) and \(\mu_-(\ad_{v_-} w_-) = \ad_{v_-}^*\mu_- (w_-)\). Collecting the coefficients of \(\delta\mu_-\) and \(w_-\) gives
\[
\delta\overline{\mathcal S}= \int_0^T \bigg[ \delta\mu_- \left(v_- - \frac{\delta h_-}{\delta \mu}\right) - \left( \dot\mu_- - \ad_{v_-}^* \mu_- + d\left( \mathrm{L}_g \right)^*_e \left(\frac{\delta h_-}{\delta g}\right) \right) (w_-) \bigg] dt.
\]
Since \(\delta\mu_-\) and \(w_-\) are independent, both parentheses must vanish. The first yields \(v_-=\frac{\delta h_-}{\delta \mu}\). Substituting this into the second gives \eqref{eq:LP-left}.
\end{proof}

\begin{proof}[Proof of Lemma~\ref{lem:variation-transported-velocity}]
By the definition of the curvature tensor $R$, for a family of vectors $u_\e\in T_{\gamma(0)}Q$,
\begin{equation*}
  \frac{D}{dt} \frac{D}{d \e}\bigg|_{\e=0} \Gamma(\gamma_\e)^t_0 u_\e - \frac{D}{d \e}\bigg|_{\e=0} \frac{D}{dt} \Gamma(\gamma_\e)^t_0 u_\e = R\left( \gamma'(t), \delta\gamma(t) \right) \Gamma(\gamma)^t_0 u_0.
\end{equation*}
The second term at the left-hand side vanishes, since $\Gamma(\gamma_\e)^t_0 u_\e$ is parallel along the $t$-direction. Recalling the definition of covariant derivative $\frac{D}{dt} = \Gamma_s^t \circ \frac{d}{dt} \circ \Gamma_t^s$, we get
\begin{equation*}
  \frac{d}{dt} \Gamma(\gamma)_t^s \frac{D}{d \e}\bigg|_{\e=0} \Gamma(\gamma_\e)^t_0 u_\e = \Gamma(\gamma)_t^s \left( R\left( \gamma'(t), \delta\gamma(t) \right) \Gamma(\gamma)^t_0 u_0 \right).
\end{equation*}
Thus,
\begin{equation*}
  \frac{D}{d \e}\bigg|_{\e=0} \Gamma(\gamma_\e)^t_0 u_\e - \Gamma(\gamma)_0^t \frac{d}{d \e}\bigg|_{\e=0} u_\e = \int_0^t \Gamma(\gamma)_r^t \left( R\left( \gamma'(r), \delta\gamma(r) \right) \Gamma(\gamma)^r_0 u_0 \right) dr.
\end{equation*}
Now, taking $u_\e$ as $v_\e(t)$, and using \eqref{trans-vel-family} and \eqref{trans-vel-var}, we reach at
\[
\frac{D}{d\epsilon}\bigg|_{\epsilon=0} \gamma'_\e(t) - \Gamma(\gamma)_0^t\,\delta v(t) = \int_0^t \Gamma(\gamma)_r^t \left( R\bigl(\gamma'(r),\delta\gamma(r)\bigr)\,\Gamma(\gamma)_0^r v(t) \right) dr.
\]
Recalling \eqref{eq:variation-der},
the left-hand side of the above equation is precisely \(\delta\dot\gamma(t)\), and we obtain
\[
\Gamma(\gamma)_0^t\,\delta v(t) = \frac{D}{dt}\delta\gamma(t)-T(\gamma'(t),\delta\gamma(t)) - \int_0^t \Gamma(\gamma)_r^t \left(R\bigl(\gamma'(r),\delta\gamma(r)\bigr)\,\Gamma(\gamma)_0^r v(t)\right)dr.
\]
Multiplying both sides by \(\Gamma(\gamma)_t^0\) and using \(\Gamma(\gamma)_t^0\frac{D}{dt}\delta\gamma(t)=\frac{d}{dt}(\Gamma(\gamma)_t^0\delta\gamma(t))\), we arrive at \eqref{eq:delta-v-main}.
\end{proof}

\section{Right formalism}\label{sec-app-right-conn}

This section summarizes the right versions of invariant connections and reduction results, which are obtained by direct analogy with their left-invariant counterparts in Section \ref{sec-inv-conn} and Subsection \ref {subsec-red}. We only list the results here and omit the detailed derivations.

\subsection{Right-invariant connections}\label{sec:right-conn}

By complete analogy, a connection $\nabla$ is called \emph{right-invariant} if right translations are affine isomorphisms. It also induces a bilinear map \(\nabla^{\mathrm{R}}:\mathfrak g\times\mathfrak g\to\mathfrak g\), now called the \emph{right bilinear map}, via right-invariant vector fields:
\[
\nabla^{\mathrm{R}}_v w := (\nabla_{v^{\mathrm R}}w^{\mathrm R})_e,
\qquad \nabla_{v^{\mathrm R}}w^{\mathrm R}=(\nabla^{\mathrm{R}}_v w)^{\mathrm R}.
\]
The results for right-invariant connections are obtained mutatis mutandis from those for left-invariant ones; we therefore list the corresponding formulae without repeating the proofs. 

The covariant derivative of arbitrary right-invariant vector fields \(V\), \(W\) is
\begin{equation*}
(\nabla_V W)_g = \left[ V_g(W_g g^{-1}) + \nabla^{\mathrm{L}}_{V_g g^{-1}} (W_g g^{-1}) \right] g, \quad g\in G.
\end{equation*}
The curve derivative formula for right-invariant connections reads
\begin{equation*}
\frac{D}{dt}W_{g(t)}
=
d(\mathrm{R}_{g(t)})\left( \frac{d}{dt} + \nabla^{\mathrm{R}}_{d(\mathrm{R}_{g(t)^{-1}})g'(t)} \right)
\left( d(\mathrm{R}_{g(t)^{-1}})W_{g(t)} \right),
\end{equation*}
and its dual version is
\begin{equation*}
\frac{D}{dt}\eta_{g(t)}
=
d(\mathrm{R}_{g(t)^{-1}})^*
\left( \frac{d}{dt} - \nabla^{\mathrm{R}*}_{d(\mathrm{R}_{g(t)^{-1}})g'(t)} \right)
\left( d(\mathrm{R}_{g(t)})^*\eta_{g(t)} \right).
\end{equation*}
The corresponding parallel transport operators are
\begin{align*}
\Gamma(g)_s^t
&=
d(\mathrm{R}_{g(t)})\,
\mathcal T\exp\left( -\int_s^t \nabla^{\mathrm{R}}_{d(\mathrm{R}_{g(r)^{-1}})g'(r)}\,dr \right)
\,d(\mathrm{R}_{g(s)^{-1}}), 
\\
\Gamma^*(g)_s^t
&=
d(\mathrm{R}_{g(t)^{-1}})^*\,
\mathcal T\exp\left( \int_s^t \nabla^{\mathrm{R}*}_{d(\mathrm{R}_{g(r)^{-1}})g'(r)}\,dr \right)
\,d(\mathrm{R}_{g(s)})^*. 
\end{align*}

The curvature and torsion of a right-invariant connection exhibit a sign change in the commutator term compared to the left-invariant case:
\begin{gather*}
\begin{aligned}
(R(U,V)W)_g
= d(\mathrm{R}_g)\Big[ &
\nabla^{\mathrm{R}}_{d(\mathrm{R}_{g^{-1}})U_g}
\nabla^{\mathrm{R}}_{d(\mathrm{R}_{g^{-1}})V_g}
\,d(\mathrm{R}_{g^{-1}})W_g - \nabla^{\mathrm{R}}_{d(\mathrm{R}_{g^{-1}})V_g}
\nabla^{\mathrm{R}}_{d(\mathrm{R}_{g^{-1}})U_g}
\,d(\mathrm{R}_{g^{-1}})W_g \\
& + \nabla^{\mathrm{R}}_{[d(\mathrm{R}_{g^{-1}})U_g,\,d(\mathrm{R}_{g^{-1}})V_g]}
\,d(\mathrm{R}_{g^{-1}})W_g \Big],
\end{aligned} 
\\
\begin{aligned}
T(V,W)_g
=
d(\mathrm{R}_g)\Big[ &
\nabla^{\mathrm{R}}_{d(\mathrm{R}_{g^{-1}})V_g}
\,d(\mathrm{R}_{g^{-1}})W_g
- \nabla^{\mathrm{R}}_{d(\mathrm{R}_{g^{-1}})W_g}
\,d(\mathrm{R}_{g^{-1}})V_g + [d(\mathrm{R}_{g^{-1}})V_g,\,d(\mathrm{R}_{g^{-1}})W_g]
\Big].
\end{aligned} 
\end{gather*}
For right-invariant vector fields,
\begin{gather*}
R(u^{\mathrm R},v^{\mathrm R})w^{\mathrm R}
=
\left(
\nabla^{\mathrm{R}}_u\nabla^{\mathrm{R}}_v w
- \nabla^{\mathrm{R}}_v\nabla^{\mathrm{R}}_u w
+ \nabla^{\mathrm{R}}_{[u,v]_{\mathfrak g}}
w
\right)^{\mathrm R}, \\
T(v^{\mathrm R},w^{\mathrm R})
=
\left(
\nabla^{\mathrm{R}}_v w - \nabla^{\mathrm{R}}_w v + [v,w]_{\mathfrak g}
\right)^{\mathrm R}.
\end{gather*}

\begin{remark}
The signs in the curvature and torsion formulae echo the fact that left-invariant vector fields form a Lie algebra isomorphic to \(\mathfrak g\), while right-invariant vector fields form an anti-isomorphic copy, as in \eqref{Lie-bracket-right}.
\end{remark}

The right version of the formula for exchange of derivatives can be proved by \eqref{eq:cov-curve-left} and \eqref{eq:torsion-left}, and is again independent of the choice of right-invariant covariant derivatives:
\begin{equation}\label{eq:exchange-der-right-conn}
\pt_s \left( \pt_t g_s(t) g_s(t)^{-1} \right) - \pt_t \left( \pt_s g_s(t) g_s(t)^{-1} \right) = \left[ \pt_s g_s(t) g_s(t)^{-1}, \pt_t g_s(t) g_s(t)^{-1} \right]_\mathfrak g.
\end{equation}
Equations \eqref{eq:exchange-der-left-conn} and \eqref{eq:exchange-der-right-conn} are two manifestations of the same geometric identity, expressed in terms of left- and right-trivialized velocities, respectively.

\subsection{Right formalism for geometric reductions}

The right-invariant reduction is entirely analogous to the left-invariant case developed in the previous subsection. By replacing left translations with right translations and using the properties of right-invariant connections from Section \ref{sec:right-conn}, one obtains the corresponding reduction equations. 

We trivialize the tangent bundle using right translations. For an element \(g\in G\), define the \emph{right-trivialized velocity}
\begin{equation*}
v_+(t) := \dot g(t) g(t)^{-1} \in \mathfrak g.
\end{equation*}
The Lagrangian is expressed in reduced coordinates as 
$l_+(g,v)=L(g, v g).$
The vertical differential satisfies
\(
d_{\dot g}L = d\left(\mathrm{R}_{g^{-1}}\right)^*_g \frac{\delta l_+}{\delta v}.
\)

For a variation \(g_\epsilon(\cdot)\) with fixed endpoints, define the right-trivialized variation field
\begin{equation*}
w_+(t) = \delta g(t) g(t)^{-1},\qquad w_+(0)=w_+(T)=0.
\end{equation*}
The variation of the reduced velocity follows from the right commutation relation \eqref{eq:exchange-der-right-conn}:
\begin{equation*}
\delta v_+(t) = \frac{\partial}{\partial\epsilon}\bigg|_{\epsilon=0}\bigl(\dot g_\epsilon(t) g_\epsilon(t)^{-1}\bigr)
= \dot w_+(t) - [v_+(t),w_+(t)]_{\mathfrak g}
= \dot w_+(t) - \ad_{v_+(t)} w_+(t).
\end{equation*}

It can be proved (see \cite[Remark 7.8]{HSS09}) that a curve \(g(\cdot)\in C^2([0,T],G)\) is a critical point of the action \(\mathcal S[g(\cdot)]=\int_0^T L(t,g,\dot g)dt\) under fixed-endpoint variations if and only if the reduced variables \((g(\cdot),v_+(\cdot))\) satisfy the \emph{right Euler--Poincar\'e equation}
\begin{equation}\label{eq:EP-right}
\frac{d}{dt}\left(\frac{\delta l_+}{\delta v}\right) + \ad_{v_+(t)}^*\left(\frac{\delta l_+}{\delta v}\right)
= d\left(\mathrm{R}_{g(t)}\right)^*_e\left(\frac{\delta l_+}{\delta g}\right).
\end{equation}


On the Hamiltonian side, define the \emph{right-trivialized momentum}
\begin{equation*}
\mu_+ := d\left(\mathrm{R}_g\right)^*_e p \in \mathfrak g^*.
\end{equation*}
The reduced Hamiltonian is \(h_+(g,\mu)=H(g,\mu g)\) and satisfies \(\pt_p H = d\left(\mathrm{R}_g\right)_e\frac{\delta h_+}{\delta \mu}\).
Then a curve \((g(\cdot),p(\cdot))\in C^2([0,T],T^*G)\) is a critical point of the phase-space action \(\overline{\mathcal S}[g(\cdot), p(\cdot)]=\int_0^T\bigl[p(t)(\dot g(t))-H(g(t),p(t),t)\bigr]dt\) under fixed-configuration-endpoint variations if and only if the reduced variables \((g(\cdot),\mu_+(\cdot))\) satisfy the \emph{right Lie--Poisson equation} (cf. \cite[Remark 9.5]{HSS09})
\begin{equation}\label{eq:LP-right}
\dot\mu_+(t) + \ad_{\frac{\delta h_+}{\delta \mu}}^* \mu_+(t) = - d\left(\mathrm{R}_{g(t)}\right)^*_e\left(\frac{\delta h_+}{\delta g}\right).
\end{equation}

The reduced Legendre transform provides the link between the Lagrangian and Hamiltonian descriptions:
\[
\mu_+ = \frac{\delta l_+}{\delta v},\qquad v_+ = \frac{\delta h_+}{\delta \mu}, \qquad \mu_+ v_+ = h_+ + l_+.
\]


\begin{theorem}[Right Euler--Poincar\'e and Lie--Poisson reductions and reconstructions]
Let \(G\) be equipped with a right-invariant connection whose associated right bilinear map is \(\nabla^{\mathrm{R}}\). The Euler--Lagrange equation \eqref{EL-Lie} is equivalent to the right Euler--Poincar\'e equation \eqref{eq:EP-right} via the reconstruction equation \(g'(t) = v_+(t) g(t)\). Hamilton's equations \eqref{Hamilton-Lie} are equivalent to the right Lie--Poisson equation \eqref{eq:LP-right}, via the reconstruction equations \(g'(t) = \big( \frac{\delta h_+}{\delta \mu} \big) g(t)\) and \(p(t) = d\left( \mathrm{R}_{g(t)^{-1}} \right)^*_{g(t)} \mu_+(t)\).
\end{theorem}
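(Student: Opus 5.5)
The proof will mirror that of Theorem~\ref{thm-EP}, with left translations replaced by right translations and the right-invariant formulae of Subsection~\ref{sec:right-conn} used throughout. Since the right Euler--Poincar\'e equation \eqref{eq:EP-right} is already known to characterize critical points of $\mathcal S$, it suffices to express the three terms of the global Euler--Lagrange equation \eqref{EL-Lie} in right-trivialized form. We then check that the connection-dependent contributions cancel.

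\textbf{Step 1 (vertical part).} Starting from $d_{\dot g}L = d(\mathrm R_{g^{-1}})^*_g \frac{\delta l_+}{\delta v}$, we apply the dual right curve-derivative formula. This gives
\[
\frac{D}{dt}(d_{\dot g}L) = d(\mathrm R_{g^{-1}})^*_g\Big(\frac{d}{dt}-\nabla^{\mathrm R*}_{v_+}\Big)\frac{\delta l_+}{\delta v}.
\]

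\textbf{Step 2 (torsion).} We evaluate on $\delta g$ with $w_+=\delta g\,g^{-1}$, using the right torsion formula. This yields
\[
d_{\dot g}L(T(g',\delta g)) = \frac{\delta l_+}{\delta v}\big(\nabla^{\mathrm R}_{v_+}w_+ - \nabla^{\mathrm R}_{w_+}v_+ + \ad_{v_+}w_+\big).
\]
Hence the torsion term equals $d(\mathrm R_{g^{-1}})^*\big(\nabla^{\mathrm R*}_{v_+}-(\nabla^{\mathrm R}v_+)^*+\ad^*_{v_+}\big)\frac{\delta l_+}{\delta v}$.

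\textbf{Step 3 (horizontal part).} By Lemma~\ref{lem:horizontal-differential} and the right parallel transport formula, differentiating $l_+\big(g_\e,\mathcal T\exp(-\int_0^\e\nabla^{\mathrm R}_{w_+}dr)\,v_+\big)$ gives
\[
d_g^\nabla L = \frac{\delta l_+}{\delta g} - d(\mathrm R_{g^{-1}})^*(\nabla^{\mathrm R}v_+)^*\frac{\delta l_+}{\delta v}.
\]

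\textbf{Step 4 (assembly).} Adding Steps 1 and 2, the $\nabla^{\mathrm R*}_{v_+}$ terms cancel. The $(\nabla^{\mathrm R}v_+)^*$ term then matches the one in Step 3. Applying $d(\mathrm R_g)^*_e$ produces exactly \eqref{eq:EP-right}, with the reconstruction $g'=v_+g$ following from the definition of $v_+$. Each step is reversible, so the equivalence holds in both directions.

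\textbf{Hamiltonian side.} The same computation applies with $p=d(\mathrm R_{g^{-1}})^*\mu_+$. We use the dual right curve-derivative and dual parallel transport, and the relation $\partial_pH=d(\mathrm R_g)_e\frac{\delta h_+}{\delta\mu}$. The first Hamilton equation becomes $g'=\frac{\delta h_+}{\delta\mu}g$. In the second, the $\nabla^{\mathrm R*}$ terms cancel as before. The horizontal differential contributes $+(\nabla^{\mathrm R}\tfrac{\delta h_+}{\delta\mu})^*\mu_+$, computed from $h_+(g_\e,\mathcal T\exp(\int_0^\e\nabla^{\mathrm R*}_{w_+})\mu_+)$, and this cancels the corresponding torsion piece. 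What remains is \eqref{eq:LP-right}.

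\textbf{Main obstacle.} The hardest part is the sign bookkeeping in Step 2, since right-invariant fields form an anti-isomorphic algebra and the right torsion carries $+[\cdot,\cdot]$. This sign must produce $+\ad^*_{v_+}$ and agree with the commutation relation \eqref{eq:exchange-der-right-conn}. I would first sanity-check the signs with the $(+)$-connection, where $\nabla^{\mathrm R}=0$ and the torsion reduces to a pure bracket.
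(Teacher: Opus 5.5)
Your proposal is correct and follows the route the paper intends. The paper omits the derivation and says the right-invariant results follow by direct analogy with the proof of Theorem~\ref{thm-EP}. That proof right-trivializes the dual covariant derivative, the torsion term, and the horizontal differential (via Lemma~\ref{lem:horizontal-differential} and right parallel transport), so that the $\nabla^{\mathrm R*}_{v_+}$ and $(\nabla^{\mathrm R}v_+)^*$ contributions cancel. Your signs, including the $+\ad^*_{v_+}$ coming from the right torsion and the $+(\nabla^{\mathrm R}\tfrac{\delta h_+}{\delta\mu})^*\mu_+$ from the dual transport, do reproduce \eqref{eq:EP-right} and \eqref{eq:LP-right}.
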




\begin{remark}
The Euler--Poincar\'e equations \eqref{eq:EP-left}, \eqref{eq:EP-right}, and the Lie--Poisson equations \eqref{eq:LP-left}, \eqref{eq:LP-right} are the standard forms appearing in the mechanics of ideal fluids and rigid bodies, independent of the choice of left- or right-invariant connections. 
The reduction results are therefore universal: they hold for any invariant connection, and the Cartan--Schouten connections considered in later sections are but one particularly bi-invariant family of such connections.
\end{remark}

\section{Local form of the integro-differential Euler--Lagrange equation}\label{sec-app-eqn}

To make equation \eqref{eq:int-diff-EL} more explicit, we introduce a parallel frame along \(\gamma\). Let \((e_i(t))\) be a frame such that \(\nabla_{\gamma'}e_i=0\) and \(e_i(0)\) is a fixed orthonormal basis of \(T_{\gamma(0)}Q\). Write
\[
\gamma'(t)=v^i(t)e_i(t),\qquad \delta\gamma(t)=w^i(t)e_i(t).
\]
In this frame, the torsion and curvature components are denoted as
\[
T(e_i(t),e_j(t))=T_{ij}^k(t)e_k(t),\qquad
R(e_i(t),e_j(t))e_k(t)=R_{ijk}^{l}(t)e_l(t).
\]
Since \(e_i(t)=\Gamma(\gamma)_0^t e_i(0)\), we have
\[
v(t)=\Gamma(\gamma)_t^0\gamma'(t)=v^i(t)e_i(0),\qquad
\Gamma(\gamma)_t^0\delta\gamma(t)=w^i(t)e_i(0).
\]
Substituting into \eqref{eq:delta-v-main} gives the components
\[
\delta v^i(t)=\dot w^i(t)-v^j(t)w^k(t)T_{jk}^i(t)-v^n(t)\int_0^t v^j(r)w^k(r)R_{jkn}^{i}(r)\,dr.
\]
Define
\[
\delta_{q^i}l(t):=\frac{\delta l}{\delta q}(t)(e_i(t)),\qquad
\delta_{v^i}l(t):=\frac{\delta l}{\delta v}(t)(e_i(0)).
\]
Then the variation of the action becomes
\[
\delta\mathcal S=\int_0^T\bigl[w^i(t)\delta_{q^i}l(t)+\delta v^i(t)\delta_{v^i}l(t)\bigr]dt.
\]
After inserting the expression for \(\delta v^i(t)\), integrating by parts the \(\dot w^i\) term, and swapping the order of integration in the curvature double integral, we obtain
\[
\delta\mathcal S=\int_0^T w^i(t)\Bigl[
\delta_{q^i}l(t)-\frac{d}{dt}\delta_{v^i}l(t)
+v^j(t)T_{ij}^k(t)\delta_{v^k}l(t)
+v^j(t)R_{ijk}^{n}(t)\int_t^T v^k(r)\delta_{v^n}l(r)\,dr
\Bigr]dt.
\]
Since the \(w^i(t)\) are arbitrary, we obtain the local form.

\begin{corollary}[Local integro-differential Euler--Lagrange equation]
In a parallel frame, the Euler--Lagrange equation \eqref{eq:int-diff-EL} takes the component form
\begin{equation*}
\delta_{q^i}l(t)=\frac{d}{dt}\delta_{v^i}l(t)
-v^j(t)T_{ij}^k(t)\delta_{v^k}l(t)
-v^j(t)R_{ijk}^{n}(t)\int_t^T v^k(r)\delta_{v^n}l(r)\,dr.
\end{equation*}
\end{corollary}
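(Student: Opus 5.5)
Most of the derivation is already laid out in the text just before the statement; what remains is to assemble it into a proof and make the index and sign conventions consistent. First, insert the parallel-frame decompositions $\gamma'(t)=v^i(t)e_i(t)$ and $\delta\gamma(t)=w^i(t)e_i(t)$ into Lemma~\ref{lem:variation-transported-velocity}. Since $e_i(t)=\Gamma(\gamma)_0^t e_i(0)$, we have $\Gamma(\gamma)_t^0\delta\gamma(t)=w^i(t)e_i(0)$, so the derivative term contributes $\dot w^i(t)e_i(0)$. Parallel transport carries parallel frames to parallel frames, so the torsion term becomes $v^jw^kT^i_{jk}(t)\,e_i(0)$. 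For the curvature term we use $\Gamma(\gamma)_t^r\gamma'(t)=v^n(t)e_n(r)$ and $R(e_j(r),e_k(r))e_n(r)=R^i_{jkn}(r)e_i(r)$. Transporting back to time $0$ then gives the displayed formula for $\delta v^i(t)$.

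Next, expand $\delta\mathcal S=\int_0^T[\frac{\delta l}{\delta q}(\delta\gamma)+\frac{\delta l}{\delta v}(\delta v)]\,dt$ in components. The term $\int\dot w^i\delta_{v^i}l$ is integrated by parts using $w(0)=w(T)=0$, which gives $-\int w^i\frac{d}{dt}\delta_{v^i}l$.

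\textbf{Torsion term.} The contribution is $-\int v^jw^kT^i_{jk}\delta_{v^i}l$. Relabel indices so that $w$ carries index $i$, then apply skew-symmetry $T^k_{ji}=-T^k_{ij}$. This yields $+\,w^i v^jT^k_{ij}\delta_{v^k}l$.

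\textbf{Curvature term.} Apply Fubini on the triangle $0\le r\le t\le T$, exchanging the roles of $r$ and $t$ exactly as in the proof of the integro-differential Euler--Lagrange equation. After relabeling, the inner time integral runs over $[t,T]$ and contains $v^k(r)\delta_{v^n}l(r)$. The remaining factor, $v^j(t)w^i(t)$ times a curvature component at time $t$, is then rearranged using the skew-symmetry $R_{jik}^n=-R_{ijk}^n$ in the first two slots.

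Collecting terms expresses $\delta\mathcal S$ as $\int_0^T w^i(t)[\cdots]_i\,dt$. The $w^i$ are arbitrary smooth functions vanishing at the endpoints, because any such functions define an admissible variation $\delta\gamma=w^ie_i$. The fundamental lemma of the calculus of variations then forces each bracket to vanish, which is the stated component equation.

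\textbf{Main obstacle.} The delicate step is the bookkeeping of indices and signs in the curvature double integral. We must check that the index contracted with $v(t)$ in the lemma, the free index $n$ paired with $\delta_{v^n}l(r)$, and the sign arising from skew-symmetry all produce exactly $-v^j(t)R_{ijk}^n(t)\int_t^Tv^k(r)\delta_{v^n}l(r)\,dr$, consistent with the coordinate-free equation \eqref{eq:int-diff-EL}. I would verify this by pairing \eqref{eq:int-diff-EL} directly with $e_i(t)$ as an independent cross-check.
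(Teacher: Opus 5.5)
Your proposal is correct and follows essentially the same route as the paper. Both substitute the parallel-frame components into Lemma~\ref{lem:variation-transported-velocity}, integrate the $\dot w^i$ term by parts, and swap the order of the curvature double integral; relabeling with the skew-symmetries $T^k_{ji}=-T^k_{ij}$ and $R^n_{jik}=-R^n_{ijk}$ then reproduces exactly the bracket the paper writes, and your sign bookkeeping checks out, as does your proposed cross-check of pairing \eqref{eq:int-diff-EL} with $e_i(t)$.
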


\section{Nilpotent case: the three-dimensional Heisenberg group}\label{sec-app-H3}

The observation in Remark \ref{rem-simple} makes nilpotent Lie groups a natural class of examples where the connection-dependent variational principle can be studied in full detail. We illustrate the general theory on the three-dimensional Heisenberg group \(\mathrm{H}_3\), the simplest non-abelian nilpotent Lie group, and show that the parallel transport operator can be expressed explicitly.


Let us adopt the (non-symmetric) vector representation
\[
g=\varg(a,b,c)=\begin{pmatrix}
1 & a & c\\
0 & 1 & b\\
0 & 0 & 1
\end{pmatrix},\qquad (a,b,c)\in\mathbb R^3.
\]
This representation amounts to adopting $(a,b,c)$ as coordinates of \(\mathrm{H}_3\).
The group multiplication is given by
\(
\varg(a,b,c)\,\varg(a',b',c')=\varg\bigl(a+a',\; b+b',\; c+c'+a b'\bigr).
\)
The identity is \(e=(0,0,0)\). The inverse of $\varg(a,b,c)$ is
\(
  \varg(a,b,c)^{-1} = \varg(-a, -b, a b-c).
\)

\subsubsection*{Heisenberg algebra}

The Lie algebra \(\mathfrak h_3\) corresponding to the left-invariant vector fields at the identity has a basis \(\{E_1,E_2,E_3\}\), given by
\[
E_1=\frac{\partial}{\partial a}\bigg|_e = \begin{pmatrix}
0 & 1 & 0\\
0 & 0 & 0\\
0 & 0 & 0
\end{pmatrix},
\qquad
E_2=\frac{\partial}{\partial b}\bigg|_e = \begin{pmatrix}
0 & 0 & 0\\
0 & 0 & 1\\
0 & 0 & 0
\end{pmatrix},
\qquad
E_3=\frac{\partial}{\partial c}\bigg|_e = \begin{pmatrix}
0 & 0 & 1\\
0 & 0 & 0\\
0 & 0 & 0
\end{pmatrix}.
\]
They satisfy the commutator relation
\(
[E_1,E_2]=E_3, [E_1,E_3]=0, [E_2,E_3]=0,
\)
and \(E_3\) is central. Hence, the Heisenberg group is two-step nilpotent.
In this coordinate system, we identify 
\(
v = v^1 E_1 + v^2 E_2 + v^3 E_3 \in \mathfrak h_3
\)
with
\(
\big(v^1,v^2,v^3\big)^\top \in \R^3.
\)
Then the adjoint action is
\begin{equation}\label{ad-Heis}
  \ad_{(u^1,u^2,u^3)^\top}\big(v^1,v^2,v^3\big)^\top = \big(0,0, u^1 v^2 - u^2 v^1\big)^\top.
\end{equation}

The tangent map of the left translation \(\mathrm{L}_g\) by \(g=\varg(a,b,c)\) at the identity is
\begin{equation}\label{left-Heis}
  d(\mathrm{L}_g)_e\big(v^1,v^2,v^3\big)^\top 
  = \big(v^1, v^2, v^3+a v^2\big)^\top = v^1\, \pt_a|_g + v^2\, \pt_b|_g + (v^3+a v^2)\, \pt_c|_g.
\end{equation}
For a curve \(g(\cdot)=(a(\cdot),b(\cdot),c(\cdot))\),
\begin{equation}\label{reduced-v-Heis}
g(t)^{-1}g'(t)= \big(\dot a(t), \dot b(t), \dot c(t)-a(t)\dot b(t)\big)^\top.
\end{equation}

\subsubsection*{Parallel transport}

Let \(g(t)=(a(t),b(t),c(t))\). 
Since the Heisenberg group is two-step nilpotent, the parallel transport operator reduces to \eqref{para-trans-heis}. 
A direct computation using \eqref{ad-Heis}, \eqref{left-Heis} and \eqref{reduced-v-Heis} yields
\begin{equation}\label{para-trans-Heis}
\begin{aligned}
  \Gamma(g)_s^t \big(v^1,v^2,v^3\big)^\top &= \bigg( v^1, v^2, v^3+(a(t)-a(s))v^2 - \lambda\int_s^t \bigl(\dot a(r)v^2-\dot b(r)v^1\bigr)\,dr \bigg)^\top \\
  &= \big( v^1, v^2, v^3+ (1-\lambda) (a(t)-a(s))v^2+ \lambda (b(t)-b(s))v^1 \big)^\top.
\end{aligned}
\end{equation}
This formula is explicit and involves only an ordinary integral of the velocity components.

\subsubsection*{Duality}

Let \(\{\theta^1,\theta^2,\theta^3\}\) be the basis of \(\mathfrak h_3^*\) dual to \(\{E_1,E_2,E_3\}\), i.e. 
\(
\theta^1=da|_e, \quad \theta^2=db|_e, \quad \theta^3=dc|_e.
\)
We identify an element \(\eta\in\mathfrak h_3^*\) with a row vector \((\eta_1,\eta_2,\eta_3)\) via \(\eta=\eta_1\theta^1+\eta_2\theta^2+\eta_3\theta^3\).
The dual of the left translation \eqref{left-Heis} \(d(\mathrm{L}_g)_e^*:\mathfrak h_3^*\to\mathfrak h_3^*\) is given by
\begin{equation*}
  d(\mathrm{L}_g)_e^*\eta = \eta\circ d(\mathrm{L}_g)_e = (\eta_1, \eta_2+a \eta_3, \eta_3) = \eta_1\, da|_g + (\eta_2+a \eta_3)\, db|_g + \eta_3\, dc|_g.
\end{equation*}
The dual operator of the adjoint \eqref{ad-Heis} \(\ad_u^*:\mathfrak h_3^*\to\mathfrak h_3^*\), is defined by \((\ad_u^*\eta)(v)=\eta(\ad_u v)\), and computed as
\begin{equation}\label{ad-dual-Heis}
\ad_u^*\eta = \big(-\eta_3 u^2, \eta_3 u^1, 0\big).
\end{equation}

The dual parallel transport on covectors, using the explicit forms \eqref{para-trans-Heis}, is given by
\begin{equation}\label{para-trans-dual-Heis}
\Gamma^*(g)_t^s \eta
=
\big(
\eta_1
+ \lambda\eta_3 (b(t)-b(s)),
\eta_2 + (1-\lambda)\eta_3 (a(t)-a(s)),
\eta_3
\big),
\end{equation}

\subsubsection*{The reduced system}

We now apply the integro-differential Euler--Poincar\'e equation \eqref{eq:reduced-int-diff-EP} to the Heisenberg group. Since the Heisenberg group is two-step nilpotent, the curvature of the Cartan--Schouten connection vanishes identically: indeed, for any \(u,v,w\in\mathfrak h_3\), we have \(R(u,v)w=\lambda(\lambda-1)[[u,v]_{\mathfrak g},w]_{\mathfrak g}=0\) because \([u,v]_{\mathfrak g}\) lies in the center. Consequently, the curvature integral term in \eqref{eq:reduced-int-diff-EP} drops out, and the equation reduces to
\[
\Gamma(g)_t^0 \frac{\delta l}{\delta g}
=
\frac{d}{dt}\frac{\delta l}{\delta v}
+(2\lambda-1)\ad_{v(t)}^*\frac{\delta l}{\delta v},
\]
together with the reconstruction equation \(\dot g(t)=\Gamma(g)_0^t v(t)\).

Let us write the equation in components with respect to the dual basis \(\{\theta^1,\theta^2,\theta^3\}\) of \(\mathfrak h_3^*\). First, we have
\(
\frac{\delta l}{\delta v} = \frac{\partial l}{\partial v^i} \theta^i = \left( \frac{\partial l}{\partial v^1}, \frac{\partial l}{\partial v^2}, \frac{\partial l}{\partial v^3} \right).
\)
The coadjoint action is given by \eqref{ad-dual-Heis}:
\[
\ad_v^* \frac{\delta l}{\delta v} = \left(-\frac{\partial l}{\partial v^3} v^2, \frac{\partial l}{\partial v^3} v^1, 0\right).
\]

The left-hand side requires the parallel transport of the covector \(\frac{\delta l}{\delta g}\) from \(g(t)\) back to \(g(0)\). In the coframe $\{da, db, dc\}$, \(\frac{\delta l}{\delta g}\) has the expression
\(
\frac{\delta l}{\delta g} = \frac{\partial l}{\partial a} da + \frac{\partial l}{\partial b} db + \frac{\partial l}{\partial c} dc = \left( \frac{\partial l}{\partial a}, \frac{\partial l}{\partial b}, \frac{\partial l}{\partial c} \right).
\)
Let \(g(t)=(a(t),b(t),c(t))\). The dual parallel transport from \(t\) to \(0\) is obtained from \eqref{para-trans-dual-Heis}:
\[
\Gamma(g)_t^0 \frac{\delta l}{\delta g}
=
\left(
\frac{\partial l}{\partial a}
+ \lambda\frac{\partial l}{\partial c} (b(t)-b_0),
\frac{\partial l}{\partial b} + (1-\lambda)\frac{\partial l}{\partial c} (a(t)-a_0),
\frac{\partial l}{\partial c}
\right).
\]
Similarly, using \eqref{para-trans-Heis} the reconstruction equation yields
\begin{equation*}
  \dot g(t)=\Gamma(g)_0^t v(t) = \big( v^1(t), v^2(t), v^3(t)+ (1-\lambda) (a(t)-a_0)v^2(t)+ \lambda (b(t)-b_0)v^1(t) \big)^\top.
\end{equation*}

Therefore, the reduced Euler--Poincar\'e equation becomes the following system:
\begin{equation*}
\left\{
\begin{aligned}
& \frac{d}{dt}\frac{\partial l}{\partial v^1} - (2\lambda-1) \frac{\partial l}{\partial v^3} v^2(t) = \frac{\partial l}{\partial a} + \lambda\frac{\partial l}{\partial c} (b(t)-b_0), \\
& \frac{d}{dt}\frac{\partial l}{\partial v^2} + (2\lambda-1) \frac{\partial l}{\partial v^3} v^1(t) = \frac{\partial l}{\partial b} + (1-\lambda)\frac{\partial l}{\partial c} (a(t)-a_0), \\
& \frac{\partial l}{\partial c} = \frac{d}{dt}\frac{\partial l}{\partial v^3}, \\
& \dot a(t) = v^1(t), \\
& \dot b(t) = v^2(t), \\
& \dot c(t) = v^3(t)+ (1-\lambda) (a(t)-a_0)v^2(t)+ \lambda (b(t)-b_0)v^1(t),
\end{aligned}
\right.
\end{equation*}
with boundary conditions $(a(0),b(0),c(0)) = (a_0,b_0,c_0)$, $(a(T),b(T),c(T)) = (a_T,b_T,c_T)$.

Finally, we note that although the curvature vanishes, the parallel transport still contains the \(\lambda\)-dependent integral terms, which affect the left-hand side through the boundary values \(a_0,b_0\). These terms arise from the non-commutativity of the group and are essential even in the flat case.

\subsubsection*{Numerical illustration}

We conclude this example with a numerical simulation of the reduced Euler--Poincar\'e equations on the Heisenberg group for the harmonic oscillator Lagrangian
\[
l = \frac12 \big[ (v^1)^2+(v^2)^2+(v^3)^2 \big] - \frac12 (a^2+b^2+c^2).
\]
The boundary value problem consists of the ODE system
\[
\left\{
\begin{aligned}
\dot a &= v^1,\\
\dot b &= v^2,\\
\dot c &= v^3 + (1-\lambda)av^2 + \lambda bv^1,\\
\dot v^1 &= - a - \lambda cb + (2\lambda-1)v^2 v^3,\\
\dot v^2 &= - b - (1-\lambda)ca - (2\lambda-1)v^1 v^3,\\
\dot v^3 &= - c.
\end{aligned}
\right.
\]
with fixed initial position at the identity \((a(0),b(0),c(0))=(0,0,0)\) and prescribed final position \((a(T),b(T),c(T))\). The velocities at the endpoints are free. We solve this two-point boundary value problem numerically using a standard collocation method (implemented in SciPy's \texttt{solve\_bvp}) for three values of the Cartan--Schouten parameter: \(\lambda=0\), \(\lambda=\tfrac12\), and \(\lambda=1\). The final time is set to \(T=1\).

Figure~\ref{fig:Heis-trajectories} shows the computed trajectories in the \((a,b,c)\)-space for several representative endpoint conditions. In each panel, the three curves correspond to the three values of \(\lambda\). The starting point is marked with a circle and the target endpoint with a square.

\begin{figure}[htbp]
\centering
\includegraphics[width=0.45\textwidth]{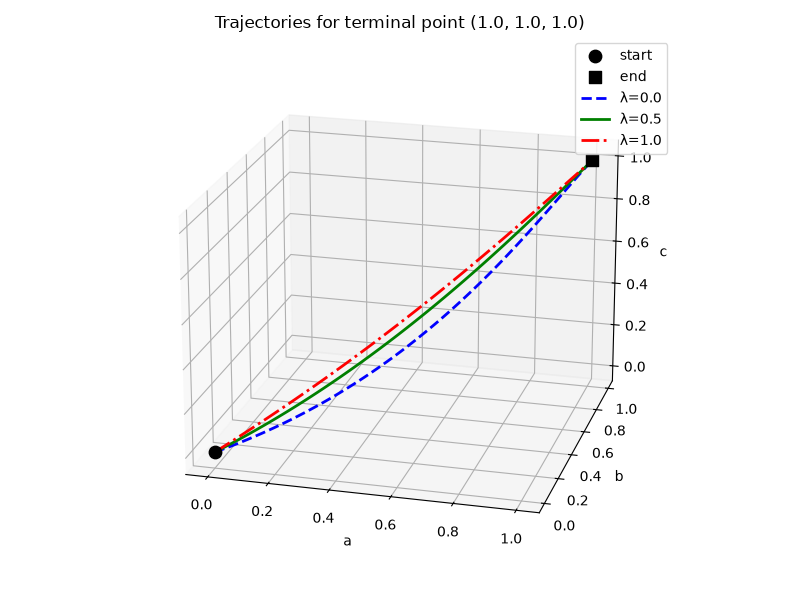}
\includegraphics[width=0.45\textwidth]{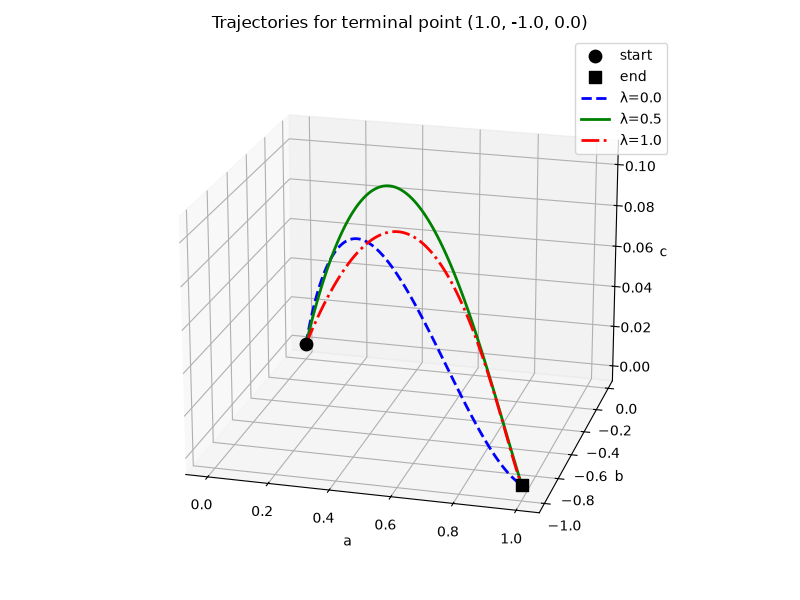}
\caption{Trajectories on the Heisenberg group for the harmonic oscillator Lagrangian, for different endpoint conditions. Dashed blue curves: \(\lambda=0\); solid green: \(\lambda=\tfrac12\); dash-dotted red: \(\lambda=1\). The starting point is at the origin and the target endpoint is shown as a square.}
\label{fig:Heis-trajectories}
\end{figure}

The numerical results clearly indicate that the trajectory for \(\lambda=\tfrac12\) lies between those for \(\lambda=0\) and \(\lambda=1\), confirming that the Cartan--Schouten parameter acts as an interpolation parameter between the $(\pm)$-connections. This behaviour is consistent with the fact that the parallel transport and the reduced equations depend continuously on \(\lambda\), and the extreme values \(\lambda=0\) and \(\lambda=1\) correspond to purely left and purely right trivializations of the group, respectively. The intermediate values of \(\lambda\) provide a continuous family of connections that interpolate between these two geometrically distinct cases.

This example demonstrates the practical utility of the connection-dependent variational principle: even on a simple non-abelian group, the parameter \(\lambda\) has a tangible effect on the dynamics, and the numerical solution of the resulting boundary value problem is feasible with standard methods.

\section{Non-nilpotent case: the rigid-body}\label{sec-app-rigid-body}

This appendix records the Lie-algebra conventions, the harmonic-oscillator model, and the discrete equations used for the computations in Subsection~\ref{subsec-rigid-body}.

\subsection{The Lie algebra \texorpdfstring{\(\mathfrak{so}(3)\)}{so(3)} and its dual}

Let \(\mathfrak{so}(3)\) be the Lie algebra of \(\mathrm{SO}(3)\), consisting of \(3\times 3\) skew-symmetric matrices. Take the basis
\[
E_1=\begin{pmatrix}0&0&0\\0&0&-1\\0&1&0\end{pmatrix},\quad
E_2=\begin{pmatrix}0&0&1\\0&0&0\\-1&0&0\end{pmatrix},\quad
E_3=\begin{pmatrix}0&-1&0\\1&0&0\\0&0&0\end{pmatrix}.
\]
These generators satisfy the commutation relations
\(
[E_1,E_2]=E_3, [E_2,E_3]=E_1, [E_3,E_1]=E_2.
\)

We recall the standard identifications for \(\mathfrak{so}(3)\) and its dual, following \cite[Examples 5.35, 5.36]{HSS09}. The hat map \(\widehat{\cdot}:\mathbb R^3\to \mathfrak{so}(3)\) is defined by
\(\widehat{v} = v^1 E_1 + v^2 E_2 + v^3 E_3, v=(v^1,v^2,v^3)\in\mathbb R^3,\)
so that \(\widehat{v}\,w = v\times w\) and \(\ad_{\widehat{v}} \widehat{w} = [\widehat{v},\widehat{w}] = \widehat{v\times w} = \widehat{\widehat{v}\,w}\).
The inverse of the hat map is denoted by \(\widecheck{\cdot}:\mathfrak{so}(3)\to\mathbb R^3\).
The dual Lie algebra \(\mathfrak{so}(3)^*\) is identified with \(\mathbb R^3\) via the breve map \(\breve{\cdot}:\mathbb R^3\to \mathfrak{so}(3)^*\), defined by
\(
\breve{\mu} (\widehat{v}) = \mu\cdot v, \mu,v\in\mathbb R^3.
\)
Under this identification, the coadjoint action of \(\mathfrak{so}(3)\) on \(\mathfrak{so}(3)^*\) is given by
\[
\ad_{\widehat{v}}^* \breve{\mu} = - \widebreve{v\times \mu},
\]
because \(\ad_{\widehat{v}}^* \breve{\mu} (\widehat{w}) = \breve{\mu} (\ad_{\widehat{v}} \widehat{w}) = \mu\cdot (v\times w) = (\mu\times v)\cdot w\).
We shall consistently use this identification and denote elements of \(\mathfrak{so}(3)^*\) by vectors \(\mu, \xi, \eta\in\mathbb R^3\).

\subsection{The harmonic oscillator}

We now add a potential that is quadratic in the deviation from the identity. Let
\[
l(g,v) = \frac12\, v\cdot \mathbb I v - V(g),
\qquad V(g) = \frac12 (3 - \operatorname{tr}g).
\]
For small rotations, \(V(g)\approx \frac12\theta^2\), where \(\theta\) is the rotation angle, so this is the natural harmonic oscillator potential on \(\mathrm{SO}(3)\).

The derivative of \(l\) with respect to \(g\) is the covector \(\frac{\partial l}{\partial g} = -\frac12 \operatorname{tr}'(g)\), where \(\operatorname{tr}'(g)\) is the derivative of the trace operator at \(g\). Suppose that this covector is represented by \(\zeta\in\mathbb R^3\) (via the breve map), that is,
\begin{equation*}
  d\big(\mathrm{L}_{g^{-1}}\big)^* \left(\frac{\delta l}{\delta g}\right) = \breve{\zeta} \in \mathfrak{so}(3)^*.
\end{equation*}
Then
\(
\zeta\cdot v = -\frac12 \operatorname{tr}(g\,\widehat{v}), v\in\mathbb R^3.
\)
Since \(\operatorname{tr}(g\,\widehat{v}) = -2\,\widecheck{\operatorname{Skew}(g)}\cdot v\), where \(\operatorname{Skew}(g)=\frac{g-g^\top}{2}\), we find
\[
\zeta = \widecheck{\operatorname{Skew}(g)}.
\]
Thus, the vector representing the left-invariant gradient of the potential is simply the skew-symmetric part of \(g\).

The reduced Euler--Poincar\'e equation \eqref{eq:reduced-int-diff-EP} now becomes
\[
\Gamma^*(g)_t^0 \bigl(\operatorname{Skew}(g)\widecheck{}\bigr)
=
\mathbb I \dot v - (2\lambda-1)\, v\times (\mathbb I v) - \lambda(1-\lambda)\, v\times \xi,
\]
with
\(
\dot\xi = - v\times (\mathbb I v), \xi(T)=0,
\)
and the reconstruction equation
\(
\dot g = \Gamma(g)_0^t v, g(0)=g_0, g(T)=g_T.
\)

\subsection{Discrete systems}

We discretize the interval \([0,T]\) into \(N\) steps of size \(h\). Let \(g_k\approx g(t_k)\), \(v_k\approx v(t_k)\), \(\mu_k = \mathbb I v_k\in\mathbb R^3\), and \(\xi_k\in\mathbb R^3\). The reconstruction step is
\[
g_{k+1} = g_k \exp(h\, \widehat{u_k}),\qquad u_k = P_k \widecheck{g_0^{-1} v_k},
\]
where the matrix exponential \(\exp(\widehat{w})\) is computed via Rodrigues' formula (e.g., \cite[Eq.~(9.2.8)]{MR99}).

The pullback of the ordered exponential \(P_k\) to $\R^3$, still denoted as \(P_k\), is updated by
\[
P_{k+1}=\exp\left(-\lambda h\,\widehat{u_k}\right) P_k,\qquad P_0 = I,
\]
since
\(
\left( \exp\left(-\lambda h\,\ad_{\widehat{u_k}}\right) \widehat{v} \right)\widecheck{}=\exp\left(-\lambda h\,\widehat{u_k}\right) v, v\in \mathfrak g.
\)

The momentum equation is discretized implicitly:
\[
\eta_k = \frac{\mu_{k+1}-\mu_k}{h} - (2\lambda-1)\, v_k\times \mu_k - \lambda(1-\lambda)\, v_k\times \xi_k,
\]
where \(\eta_k=0\) for the free rigid body and
\(
\eta_k = d\big(\mathrm{L}_{g_0}\big)^* \, P_k^{-1}\,\widecheck{\operatorname{Skew}(g_k)}
\)
for the harmonic oscillator.
The auxiliary variables satisfy
\(
\xi_k = \xi_{k+1} - h\, v_k\times \mu_k, \xi_N=0.
\)
The Legendre transform yields \(\mu_k = \mathbb I v_k\).

\section{More about the two-level scheme}\label{sec-app-scheme}

\subsection{Algorithm}

The implementation of the two-level scheme in Subsection \ref{subsec-num-sch} can be organized as follows (pseudocode):
\begin{algorithm}
\caption{Two-level scheme for the integro-differential Euler--Poincar\'e equation}
\KwIn{\(g_0, g_T \in G\), \(\lambda \in [0,1]\), \(N \in \mathbb N\), \(\text{tol}>0\), \(\text{maxIter}>0\)}
\KwOut{Discrete solution \(\{g_k, v_k\}_{k=0}^{N}\)}
Set \(h = T/N\), \(P_0 = I\), choose initial guess \(v_0 \in \mathfrak g\), set \(\xi_k^{(0)}=0\) for all \(k\)\;
\For{\(m = 0\) \KwTo maxIter}{
  \tcc{Outer fixed-point iteration for \(\{\xi_k\}\)}
  \Repeat{\(\|g_N - g_T\| < \text{tol}\)}{
    \tcc{Inner shooting loop for \(v_0\)}
    Set \(g_0\) given, \(P_0 = I\), \(\mu_0 = \frac{\delta l}{\delta v}(t_0,g_0, v_0)\)\;
    \For{\(k = 0\) \KwTo \(N-1\)}{
      \tcc{Forward sweep}
      \(u_k = P_k v_k\)\;
      \(g_{k+1} = g_k \exp(h\,u_k)\)\;
      \(P_{k+1} = \exp(-\lambda h\,\ad_{u_k}) P_k\)\;
      \(\eta_k = d\big(\mathrm{L}_{g_0}\big)^* P_k^* d\big(\mathrm{L}_{g_k^{-1}}\big)^* \left(\frac{\delta l}{\delta g}(t_k,g_k, v_k)\right)\)\;
      \(\mu_{k+1} = \mu_k + h\left( \eta_k - (2\lambda-1)\ad_{v_k}^*\mu_k - \lambda(1-\lambda)\ad_{v_k}^*\xi_k^{(m)} \right)\)\;
      \(v_{k+1} = \left(\frac{\delta l}{\delta v}(t_{k+1},g_{k+1},\cdot)\right)^{-1}(\mu_{k+1})\)\;
    }
    Compute Jacobian \(J = \frac{\partial g_N}{\partial v_0}\) by finite differences\;
    Update \(v_0 \leftarrow v_0 - J^{-1}(g_N - g_T)\)\;
  }
  \tcc{Backward integration for \(\{\xi_k\}\)}
  Set \(\xi_N = 0\)\;
  \For{\(k = N-1\) \KwTo \(0\)}{
    \(\xi_k^{(m+1)} = \xi_{k+1}^{(m+1)} + h\,\ad_{v_k}^*\mu_k\)\;
  }
  \If{\(\max_k \|\xi_k^{(m+1)} - \xi_k^{(m)}\| < \text{tol}\)}{
    \KwRet \(\{g_k, v_k, \mu_k, \xi_k^{(m+1)}\}_{k=0}^{N}\)\;
  }
}
\KwRet $\{g_k, v_k\}_{k=0}^{N}$.
\end{algorithm}

\subsection{Accuracy and improvement}

The scheme described above is \emph{first‑order accurate} in the step size \(h\): the local error in the group element is \(O(h^2)\), and the global error is \(O(h)\). This is simple, robust, and sufficient for many practical applications, especially when the dynamics are not too stiff.

To achieve higher accuracy, one can:
\begin{itemize}
\item Use a higher‑order Magnus expansion for the parallel transport. A second‑order Magnus method on the interval \([t_k,t_{k+1}]\) approximates the path‑ordered exponential by
\[
P_{k+1} = \exp\left( \Omega_1 + \Omega_2 \right) P_k,
\]
where
\[
\Omega_1 = -\lambda h\,\ad_{u_k},\qquad
\Omega_2 = -\frac{\lambda^2 h^2}{12}\left( [\ad_{u_k}, \ad_{u_{k+1}}] \right),
\]
requiring a predictor step for \(u_{k+1}\). This yields second‑order accuracy in \(h\).


\item Use a higher‑order integrator for the momentum equation: Replace the implicit Euler step by a trapezoidal rule or a symplectic integrator, which would improve the conservation properties.
\end{itemize}

\end{appendices}

{\footnotesize
\bibliographystyle{plain}
\bibliography{refs}
}

\end{document}